\newtheorem{remark}{Remark}
\newtheorem*{remark*}{Remark}
\newtheorem{theorem}{Theorem}
\newtheorem{lemma}{Lemma}
\begin{document}

\title{ {A Fast Frequent Directions Algorithm \\ for Low Rank Approximation}}

\author{Dan~Teng, and Delin~Chu~\IEEEmembership{(Senior Member,~IEEE)}
\IEEEcompsocitemizethanks{\IEEEcompsocthanksitem D. Teng and D. Chu are with the Department of Mathematics, National University of Singapore, Block S17, 10 Lower Kent Ridge Road, Singapore, 119076. \protect E-mail: tengdan@u.nus.edu, matchudl@nus.edu.sg. \ This work was supported in part by NUS research grant R-146-000-236-114.}
\IEEEcompsocitemizethanks{\IEEEcompsocthanksitem DOI: 10.1109/TPAMI.2018.2839198}}

\IEEEtitleabstractindextext{%
\begin{abstract}
Recently a deterministic method, frequent directions (FD) is proposed to solve the high dimensional low rank approximation problem.
It works well in practice, but experiences high computational cost.
In this paper, we establish a fast frequent directions algorithm for the low rank approximation problem,  which implants
a randomized algorithm, sparse subspace embedding (SpEmb) in FD. This new algorithm makes
use of FD's natural block structure and sends more information through SpEmb to each block in FD. We prove that our new algorithm produces a good low
rank approximation with a sketch of size linear on the rank approximated. Its effectiveness and efficiency are demonstrated by
the experimental results on both synthetic and real world datasets, as well as applications in network analysis.
\end{abstract}

\begin{IEEEkeywords}
low rank approximation, randomized algorithms, frequent directions, sparse subspace embedding
\end{IEEEkeywords}}

\maketitle

\IEEEdisplaynontitleabstractindextext

\IEEEpeerreviewmaketitle

\ifCLASSOPTIONcompsoc
\IEEEraisesectionheading{\section{Introduction}\label{sec: intro}}
\else
\section{Introduction}
\label{sec: intro}
\fi

\IEEEPARstart{G}{iven} an input matrix $A\in\mathbb{R}^{n\times d}$ ($n\geq d$) and a rank parameter $k\leq$ rank($A$), the low rank approximation problem is to find a matrix $\tilde{A}_k\in\mathbb{R}^{n\times d}$  of rank-$k$ to approximate the original matrix $A$.
Low-rank approximation is a major topic in scientific computing and it is an essential tool in many applications including principal component analysis, data compression, spectral clustering and etc \cite{Rokhlin2009, Halko2011, Dhanjal2014}.

The best low rank approximation can be obtained through singular value decomposition (SVD) \cite{Golub2013}, rank revealing QR factorization \cite{Chan1994}
or two-sided orthogonal factorizations \cite{Fierro1997}. However, with rapid growth on data dimension, these traditional and deterministic methods are expensive and inefficient \cite{Mahoney2011}. For a matrix $A\in\mathbb{R}^{n\times d}$ $(n\geq d)$, the SVD computation requires $O(nd^2)$ time which is unfit for large $n$ and $d$. Many randomized algorithms have been developed to target at this large dimension issue. The idea of a randomized algorithm is to trade accuracy with efficiency. A good randomized algorithm tends to have high accuracy with low running time.
Random Sampling and random projection are two commonly used randomization techniques which can be easily employed to solve the low rank approximation problem. Random sampling finds a small subset of rows or columns based on a pre-defined probability distribution to form a sketch and it is also known as the column subset selection problem \cite{Boutsidis2011}.
Possible techniques include subspace sampling \cite{Drineas20063}, norm sampling \cite{Drineas2003, Holodnak2015, Drineas2006} and adaptive sampling \cite{Wang2013, Deshpande2006}.
Random projection combines rows or columns together to produce a small sketch \cite{Liberty2007, Sarlos2006}.
Possible techniques include subsampled randomized Hadamard transform \cite{Tropp2011, Woolfe2008, Boutsidis2013}, discrete Fourier transforms \cite{Avron2011, Urano2013}, sparse subspace embedding (SpEmb) \cite{Clarkson2013, Meng2013, Nelson2013}, Gaussian projection \cite{Dasgupta2003, Gu2014}, subspace iterations \cite{Halko2011, Gu2014, Musco2015} and the fast Johnson-Lindenstrass transform \cite{Ailon2009}. Among them, SpEmb is easy to implement, respects the sparsity of the input matrix and achieves similar error bound with the lowest running time.

Apart from randomized algorithms, another approach to produce a sketch of a matrix is through frequent directions (FD) \cite{Liberty2013,
Ghashami2015}.
It is motivated by the idea of frequent items in item frequency approximation problem. This algorithm requires the sketch size to be linear on the rank approximated to reach the relative-error bound for the low rank approximation. It works well in practice, but with high computational cost.
An additional point about FD is that besides producing the sketch, it also returns a matrix consisting of its right singular vectors during the process. This is useful for low rank approximation which we will discuss in later sections.

\emph{Is there any method that achieves high accuracy comparable to FD and low computational cost similar as SpEmb for the low rank approximation problem?} In this paper, we establish a fast frequent directions algorithm, which provides an answer to this question.  This new algorithm
performs FD by sending more information to each iteration which makes the SVD computation in each iteration more valuable and accurate. In the meanwhile,
it can be seen as performing SpEmb on the input matrix to obtain an intermediate sketch, then applying FD on the sketch which reduces the number of iterations required. Our new algorithm preserves the high accuracy of FD with the sketch size required to reach a good error bound
linear on the rank approximated; it also reduces the computational cost substantially by cutting down the number of iterations in FD and exploiting the
sparsity of the input matrix.

The rest of this paper is organized as follows: We review some related work on low rank approximation, SpEmb and FD in Section~\ref{sec: 2}, they provide the foundation and motivation in developing the new algorithm. Then we introduce our new algorithm with theoretical analysis in Section~\ref{sec: 3}.
In Section~\ref{sec: 4}, we evaluate the performance of our new algorithm in comparison with FD and three well-known randomized algorithms including SpEmb
on both synthetic and real world datasets. We explore the application on network problems in Section~\ref{sec: 5}. Finally, concluding remarks are given in Section~\ref{sec: 6}.

\section{Related Work}
\label{sec: 2}

Given an input matrix $A\in\mathbb{R}^{n\times d}$, let
$A = U_A\Sigma_A V_A^T$
be the singular value decomposition of $A$, where the singular values of $A$ are arranged in a non-increasing order in $\Sigma_A$. By the Eckart-Young-Mirsky theorem, the best rank-$k$ ($k\leq$ rank($A$)) approximation for both Frobenius norm ($F$-norm) and spectral norm ($2$-norm) is
$A_k = (U_A)_k(\Sigma_A)_k(V_A)_k^T$,
where $(\Sigma_A)_k\in\mathbb{R}^{k\times k}$ consists of the top $k$ singular values in its diagonal entries, $(U_A)_k\in\mathbb{R}^{n\times k}$ and $(V_A)_k\in\mathbb{R}^{d\times k}$ consist of the $k$ left and $k$ right singular vectors of $A$ which correspond to the top $k$ singular values, respectively.
For randomized algorithms and FD method, they aim to find a low rank approximation  $\tilde{A}_k$ to minimize $\norm{A-\tilde{A}_k}^2_F$.

To construct $\tilde{A}_k$, the first step is to produce a sketch of the input matrix and it could be
formulated as premultiplying a sketching matrix $S\in\mathbb{R}^{\ell\times n}$ to $A$ with sketch size $\ell\ll n$. In particular, for Gaussian projection, $S_{ij}\sim\mathcal{N}(0,1)$ i.i.d. with $i\in[\ell], j\in[n]$;
 for uniform sampling, $S$ consists of $\ell$ rows uniformly selected from $I_n$. $S$ is designed to form $SA$ efficiently and also to preserve the important information in $A$. It can be found in \cite{Mahoney2011, Woodruff2014, Drineas2006} on how such an $S$ is constructed. For the deterministic method FD, the construction of a sketch is discussed in Subsection~\ref{sec: FD}.

After forming the sketch $B = SA\in\mathbb{R}^{\ell\times d}$, the next step is to construct the low rank approximation $\tilde{A}_k$. Let $\Pi_{B, k}^F(A)\in\mathbb{R}^{n\times d}$ denote the best rank-$k$ approximation to $A$ in the row space of $B$, with respect to $F$-norm.
It has been proved \cite{Boutsidis2011, Boutsidis2013, Woodruff2014} that
\begin{align}
\label{eq: Fproof}
\norm{A-[AV]_kV^T}_F^2 &= \norm{A-\Pi_{B, k}^F(A)}_F^2
\end{align}
where $V\in\mathbb{R}^{d\times \ell}$ satisfies $V^TV = I_\ell$ and its columns span the row space of $B$ and $[AV]_k$ denotes the best rank-$k$ approximation of $AV$. Thus, $\tilde{A}_k$ could be set as $[AV]_kV^T$.

\subsection{Sparse Subspace Embedding (SpEmb)}
\label{sec: SpEmb}
Since its introduction by Clarkson and Woodruff \cite{Clarkson2013}, SpEmb has received great attention due to its efficient time complexity. It has been extensively used as a randomization technique to solve numerical linear algebraic problems,
here we look at its application on low rank approximation.

Define a sketch matrix $S = \phi D\in \mathbb{R}^{\ell\times n}$, such that
\begin{itemize}
\item $\phi\in \{0,1\}^{\ell\times n}$ is an $\ell\times n$ binary matrix with $\phi_{h(i), i} = 1$ and remaining entries $0$, where $i\in[n]$ and $h: [n]\rightarrow [\ell]$ is a random map so that for each $i\in[n]$, $h(i) = \gamma$ for $\gamma \in[\ell]$ with probability $\frac{1}{\ell}$;
\item $D$ is an $n\times n$ diagonal matrix with each diagonal entry independently chosen to be $\pm 1$ with equal probability.
\end{itemize}

$S$ is called a SpEmb matrix, each column of $S$ contains exactly one nonzero entry $-1$ or $+1$, with equal probability. Due to the structure of $S$,
the sketch $SA$ can be formed efficiently by exploiting the sparsity of $A$  \cite{Clarkson2013, Woodruff2014, Nelson2013, Meng2013}. SpEmb is described in Algorithm~\ref{alg: proto}.
\begin{algorithm}
\caption{SpEmb}
\label{alg: proto}
\begin{algorithmic}[1]
\REQUIRE $A\in\mathbb{R}^{n\times d}$, rank parameter $k$, sketch size $\ell$.
\ENSURE $\tilde{A}_k$.
\STATE Form the sketch matrix $B= SA\in\mathbb{R}^{\ell\times d}$ with $S\in\mathbb{R}^{\ell\times n}$ being a SpEmb matrix;
\STATE Construct a $d\times \ell$ matrix $V$ whose columns form an orthogonal basis for the row space of $B$;
\STATE Compute $[AV]_k$ and  $\tilde{A}_k = [AV]_kV^T$.
\end{algorithmic}
\end{algorithm}
We can use the QR factorization $B^T = VR$ to construct $V$, while $[AV]_k$ could be obtained by the SVD on $AV$. The reader may refer to  \cite{Clarkson2013, Meng2013, Woodruff2014} for more details on SpEmb.

\subsection{Frequent Directions (FD)}
\label{sec: FD}
Compared to most randomized algorithms, FD approaches the matrix sketching problem from a different perspective. It is deterministic, space efficient and produces more accurate estimates given a fixed sketch size.
 It was introduced by Liberty \cite{Liberty2013}, later extended by Ghashami et al. \cite{Ghashami2015} with a comprehensive analysis, and a few variations \cite{Desai2015, Ghashami2016} have been developed since then.
 The algorithm extends the idea of frequent items for item frequency approximation problem to a general matrix. Given an input matrix $A\in\mathbb{R}^{n\times d}$ and space parameter $\ell$, at first, the algorithm considers the first $2\ell$ rows in $A$ and shrinks its top $\ell$ orthogonal vectors by the same amount to obtain an $\ell\times d$ matrix;
 then combines them with the next $\ell$ rows in $A$ for the next iteration, and repeat the procedure. It is illustrated in Fig.~\ref{fig: fd}. The detailed algorithm is given in Algorithm~\ref{alg: fd}.

\begin{figure*}[tb]
\centering
\includegraphics[width = 12cm, height = 4.15cm]{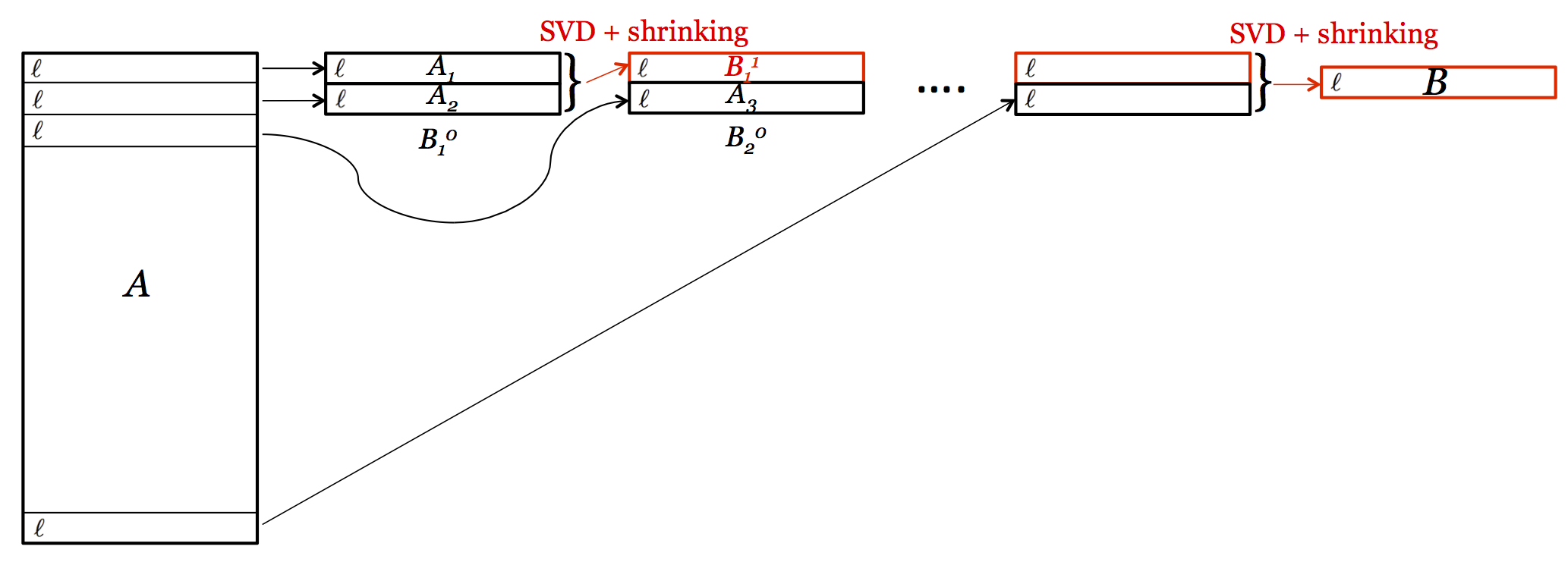}
\caption{Illustration of FD.}
\label{fig: fd}
\end{figure*}

\begin{algorithm}
\caption{FD}
\label{alg: fd}
\begin{algorithmic}[1]
\REQUIRE $A\in\mathbb{R}^{n\times d}$, rank parameter $k$, sketch size $\ell$.
\ENSURE $\tilde{A}_k$ and $B$.
\STATE Set $B\leftarrow 0^{2\ell\times d}$, $B(1:\ell,:) \leftarrow A(1:\ell, :)$;
\STATE Append zero rows to $A$ to ensure the number of rows in $A$ is a multiple of $\ell$;
\STATE \textbf{for} $i = 1, \hdots, \lceil n/\ell\rceil-1$, \textbf{ do}\\
		~~~~ $B(\ell+1:2\ell, :) \leftarrow A(i\ell + 1: (i+1)\ell, :)$\\
		~~~~ $[U, \Sigma, V] = SVD(B)$, \\
		~~~~ $\delta_i = \sigma_{\ell+1}^2$, \textit{~~~~~~~~~~~~~~~~~~~~~~~~~~~~~~~~~~~~~} \# ~ $\sigma_{\ell+1} = \Sigma_{\ell+1, \ell+1}$\\
		~~~~ $B \leftarrow \sqrt{\max(\Sigma^2-I_{2\ell}\delta_i, 0)}\cdot V^T$;\\ ~~~~~\textit{~~~~~~~~~~~~~~~~~~~~~~~~~~~\# ~The last $\ell$ rows of $B$ are zero-valued} \\
\STATE Set $B=B(1:\ell, :)$ and $V = V(:, 1:\ell)$;
\STATE Compute $[AV]_k$ and $\tilde{A}_k = [AV]_kV^T$.
\end{algorithmic}
\end{algorithm}

Algorithm~\ref{alg: fd} is slightly different from the original one in
\cite{Liberty2013, Ghashami2015} in the sense that it
performs one more round of SVD and $B$ update at the end when $B$ contains zero rows. This is to maintain a sketch $B$ of size $\ell\times d$ and to acquire a matrix $V$ whose columns form an orthonormal basis for the row space of $B$.

\section{Fast Frequent Directions via Sparse Subspace Embedding (SpFD)}
\label{sec: 3}
Notice that each iteration of FD requires $O(d\ell^2)$ running time to compute the SVD and there are $\lceil n/\ell\rceil-1$ iterations. The main cost comes from the large number of iterations when $n$ is extremely large. Here note at the $i$th iteration ($i>1$), each $B_i^0$
is composed of the approximate $B_{i-1}^1$ from the previous step and a new set of data in $A$, from which FD extracts and shrinks the top $\ell$ important right singular vectors to form $B_i^1$ (refer to Fig.~\ref{fig: fd}).

The goal is to ensure that the top $k$ right singular vectors of $A$ are preserved through every iteration. If the number of iterations is large, there is a higher risk of discarding useful information during the extractions via SVD with shrinking. Since each iteration is considered as an approximation step, to make this approximation more efficient, instead of bringing just $\ell$ rows from $A$ to form $B_i^0$ in the $i$th iteration, we bring more information of $A$ to $B_i^0$ in a compact way (squeeze more rows of $A$ in the lower $\ell$ rows of $B_i^0$). This could make each SVD computation more valuable and reduce the number of iterations which in a way lessens the approximation error. It obviously introduces a new type of approximation error as the information being analyzed in each iteration is no longer of $A$, but rather a sketch of $A$. Therefore, it requires balancing of the sketch error from compressing more information in $\ell$ rows during each iteration and the shrinking error from performing more iterations of SVD computations, i.e., balancing the error incurred in the sketching and the error incurred in FD.

Here SpEmb is used to perform the intermediate sketching. It is efficient and respects the possible sparsity of the input matrix.

\begin{figure*}[tb]
\centering
\includegraphics[width = 16cm, height = 4.3cm]{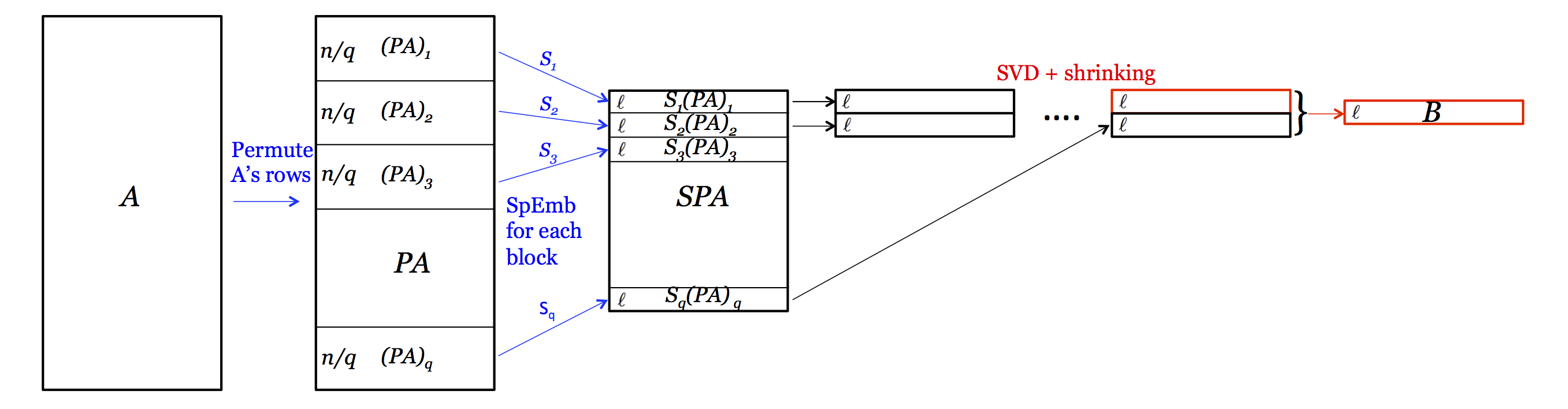}
\caption{Illustration of SpFD.}
\label{fig: intuition}
\end{figure*}

Now let's illustrate the method in detail. Define
\begin{equation}
S = \begin{bmatrix}
S_1 & & \\ & \ddots & \\ & & S_q
\end{bmatrix}\in\mathbb{R}^{q\ell\times n},
\label{eq: S}
\end{equation}
where each $S_i$ ($i=1,\hdots, q$) is a SpEmb matrix of dimension $\ell \times n/q$. Here we assume $n$ is a multiple of $q$. If not, we can append zero rows to the end of $A$ until $n$ is. Let $P\in\mathbb{R}^{n\times n}$ denote a random permutation matrix constructed via Fisher-Yates-Knuth shuffle, for each $e_j$ (the $j$th column of $I_n$) with $j\in[n]$, $\mathbb{P}[Pe_j = e_i] = 1/n$ for $i \in[n]$. $P$ is generated using \texttt{randperm} in MATLAB.
Then we apply FD to the intermediate sketch $SP A\in\mathbb{R}^{q\ell\times d}$ to obtain $B\in\mathbb{R}^{\ell\times d}$ and the corresponding $V\in\mathbb{R}^{d\times \ell}$. Fig.~\ref{fig: intuition} illustrates the process.

SpFD is a combination of FD and SpEmb.
Each $S_i$ maps $n/q$ rows of $PA$ to $\ell$ rows in the intermediate sketch through SpEmb,
then perform FD on the intermediate sketch $SPA$. The magnitude of $q$ indicates the weight distributed between FD and SpEmb. When $q = 1$, $S$ has only one block, then SpFD becomes SpEmb. When $q = n/\ell$, $S$ is of dimension $n\times n$, SpFD will become FD if $SPA$ contains the same rows as $A\in\mathbb{R}^{n\times d}$. Here note as a SpEmb matrix, each $S_i\in\mathbb{R}^{\ell\times \ell}$ may contain many zero rows and so will $S\in\mathbb{R}^{n\times n}$ and the sketch $SPA\in\mathbb{R}^{n\times d}$ which implies SpFD is unlikely to become FD. This will be discussed more while analyzing SpFD's running time in Section~\ref{sec: 4}.
The new method SpFD is described in Algorithm~\ref{alg: SpFD}.
\begin{algorithm}[!]
\caption{SpFD}
\label{alg: SpFD}
\begin{algorithmic}[1]
\REQUIRE $A\in\mathbb{R}^{n\times d}$, sketch size $\ell$, rank parameter $k$,  number of blocks $q$ (assume $n$ is a multiple of $q$).
\ENSURE $\tilde{A}_k$ and $B$.
\STATE Apply random permutation $P$ on $A$ to obtain $PA$;
\STATE Set $B \leftarrow 0^{2\ell\times d}$, $B(1:\ell,:)\leftarrow SpEmb(PA(1:\frac{n}{q},: ), \ell)$;\\
 \textit{~~~~~~~~~~~~~~~~~~~~~~~~~~~~~~\# ~This is to set $B_1^0(1:\ell,:) =S_1(PA)_1$}
\STATE \textbf{for} $i = 1, ..., q-1$ \textbf{do} \\
		~~~~ $B(\ell+1:2\ell, :) \leftarrow SpEmb(PA(i \frac{n}{q}+1: (i+1)\frac{n}{q}, :), \ell)$, \\
\textit{~~~~~~~~~~~~~~~~~~~~~\#~ This is to set $B_i^0(\ell+1:2\ell, :) = S_{i+1}(PA)_{i+1}$}\\
		~~~~ $[U, \Sigma, V] \leftarrow SVD(B)$,\\
		~~~~ $\delta_i = \sigma^2_{\ell+1}$,\\
		~~~~ $B = \sqrt{\max(\Sigma^2-I_{2\ell}\delta_i, 0 )}\cdot V^T$,\\ 
\STATE Set $B=B(1:\ell, :)$ and $V = V(:, 1:\ell)$ (if $q=1$, construct $V$ directly from $B$);
\STATE Compute $[AV]_k$ and  $\tilde{A}_k = [AV]_kV^T$.
\end{algorithmic}
\end{algorithm}

The random permutation matrix $P$ is constructed to mix the blocks of $A$. Denote $A = [a_1 ~  a_2 ~ \hdots ~ a_n]^T$ where $a_i\in\mathbb{R}^d$ and let $[i_1 ~ i_2 ~ \hdots ~ i_n] = $\texttt{ randperm} ($n$), then $PA = [a_{i_1} ~ a_{i_2} ~ \hdots ~ a_{i_n}]^T$, it is only required to load $(PA)_j = [a_{i_{(j-1)n/q+1}} ~ a_{i_{(j-1)n/q+2}} ~ \hdots ~ a_{i_{jn/q}}]^T$ sequentially for each $j = 1, \hdots, q$.  Therefore, although SpFD requires a permutation step, it is not necessary to load the whole data in memory.

\subsection{Main Theorem}
\label{sec: LRmaintheorem}
Next we are going to prove SpFD's accuracy. We first look at  $SP\in\mathbb{R}^{q\ell\times n}$; show that if $q\ell$ satisfies certain lower bound, $SP$ is a good sketching matrix.
$SP$ is in fact a variation of a SpEmb matrix. The difference is that $SP$ does not allow more than $n/q$ rows mapping to the same row in the sketch.

\begin{lemma}
\label{lemma: SpFD1}
Given a matrix $U\in\mathbb{R}^{n\times k}$ with orthonormal columns $(k\leq n)$. For any $\varepsilon, \delta\in(0,1)$, let $S\in\mathbb{R}^{q\ell\times n}$ be defined as (\ref{eq: S})
with
\[
q\ell \geq \dfrac{k^2+k}{\varepsilon^2\delta}.
\]
Let $P$ be an $n\times n$ random permutation matrix (i.e., for each $e_j$ with $j\in [n]$, $\mathbb{P}[Pe_j = e_i] = 1/n$ for $i\in[n]$), then with probability at least $1-\delta$,
\begin{equation}
\norm{U^T(SP)^TSPU-I}_2 \leq \varepsilon.
\label{eq: bound}
\end{equation}
\end{lemma}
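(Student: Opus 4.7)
The plan is to mirror the standard second-moment analysis of a pure sparse subspace embedding (Clarkson--Woodruff style) while tracking how the random permutation $P$ interacts with the block-diagonal structure of $S$. Writing $S=\phi D$ with $\phi$ the $\{0,1\}$ location pattern and $D$ a $\pm1$ diagonal, each column $j$ of $SP$ has exactly one nonzero entry $\sigma_j\in\{\pm1\}$ in a single row $h(j)\in[q\ell]$; the signs $\{\sigma_j\}$ are i.i.d.\ Rademacher and are independent of the positions $\{h(j)\}$, because $D$, the permutation $\pi$, and the within-block SpEmb hashes are drawn independently, and permuting $D$'s diagonal preserves the i.i.d.\ Rademacher structure. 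This lets me treat $SP$ as a generic CountSketch-type map, only with a modified collision law on $h$.

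Next I would expand the Frobenius error. Because $U^TU=I_k$, the diagonal $b=c$ contributions cancel the $-I$, giving
\[
\bigl(U^T(SP)^T SPU - I\bigr)_{ij} \;=\; \sum_{b\neq c}\sigma_b\sigma_c\,U_{bi}U_{cj}\,\mathbf{1}[h(b)=h(c)].
\]
Squaring, summing over $(i,j)$, and taking expectation, the independence of signs from $h$ and the vanishing of $\mathbb{E}[\sigma_b\sigma_c\sigma_{b'}\sigma_{c'}]$ unless $\{b,c\}=\{b',c'\}$ force the surviving terms to be pairings, yielding
\[
\mathbb{E}\|U^T(SP)^T SPU - I\|_F^2 \;=\; \sum_{b\neq c}\Bigl(\|U_{b,:}\|^2\|U_{c,:}\|^2 + (UU^T)_{bc}^2\Bigr)\,\mathbb{P}[h(b)=h(c)].
\]
Using $\sum_b\|U_{b,:}\|^2=k$ and $\|UU^T\|_F^2=\mathrm{tr}(U^TUU^TU)=k$, any uniform bound $\mathbb{P}[h(b)=h(c)]\leq 1/(q\ell)$ for $b\neq c$ immediately gives $\mathbb{E}\|\cdot\|_F^2 \leq (k^2+k)/(q\ell)$.

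The key combinatorial step—and the one place where the block-permutation coupling actually does something—is this pairwise collision bound. I would decompose $\{h(b)=h(c)\}$ as ``$\pi(b)$ and $\pi(c)$ land in the same one of the $q$ index blocks of size $n/q$'' \emph{and} ``the two within-block SpEmb hashes agree''. Collisions across distinct blocks are impossible since $S$ is block diagonal with disjoint row ranges. Under the uniform random permutation, conditional on $\pi(b)$ the target $\pi(c)$ is uniform over the remaining $n-1$ indices, of which $n/q-1$ lie in the same block, so $\mathbb{P}[\text{same block}]=(n/q-1)/(n-1)\leq 1/q$. Conditional on being in the same block, the two within-block hashes are independent uniform on $[\ell]$, contributing a further factor $1/\ell$. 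Multiplying gives $\mathbb{P}[h(b)=h(c)]\leq 1/(q\ell)$.

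Finally, Markov's inequality converts the expected Frobenius bound into a high-probability statement: with probability at least $1-\delta$,
\[
\|U^T(SP)^T SPU - I\|_2^2 \;\leq\; \|U^T(SP)^T SPU - I\|_F^2 \;\leq\; \frac{k^2+k}{q\ell\,\delta} \;\leq\; \varepsilon^2
\]
under the hypothesis $q\ell\geq(k^2+k)/(\varepsilon^2\delta)$. The main obstacle in the argument is the collision estimate above: it is where the block structure of $S$ and the uniform permutation $P$ must be jointly exploited, and where a careless analysis could either overcount by forgetting that different blocks cannot collide or mis-track the independence between the signs in $D$ and the permutation $\pi$. Once that estimate is in hand, the rest is a direct transcription of the pure SpEmb second-moment proof.
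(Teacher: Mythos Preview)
Your proof is correct and follows essentially the same strategy as the paper: bound $\mathbb{E}\|U^T(SP)^TSPU-I\|_F^2$ via Rademacher cancellation and the pairwise collision estimate $\mathbb{P}[h(b)=h(c)]=\frac{n/q-1}{(n-1)\ell}\le\frac{1}{q\ell}$, then apply Markov. The paper carries out this same computation entry-by-entry, bounding each $\mathbb{E}[x_{st}^2]$ through a four-way case split on the sign pattern $d_{j_1}d_{j_2}d_{j_1'}d_{j_2'}$ and tracking $\phi$ and $PU$ separately, whereas your reparametrization of $SP$ as a CountSketch map with hash $h$ and signs $\sigma$ (independent of $h$) collapses that case analysis into a single collision probability; the two arguments are equivalent and yield the identical bound $(k^2+k)/(q\ell)$.
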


\begin{proof}
To show (\ref{eq: bound}), we bound $\mathbb{E}\left[\norm{U^T(SP)^TSPU-I}_F^2\right]$ and then apply Markov's inequality.

Let $X = U^T(SP)^TSPU\in\mathbb{R}^{k\times k}$ and $x_{st}$ denote the $(s,t)$th-entry of $X$. Let $\mathcal{L}_i$ denote the set of integers from domain $\left[(\lceil\frac{i}{\ell}\rceil-1)\frac{n}{q}+1, \lceil\frac{i}{\ell}\rceil \frac{n}{q} \right]$, the cardinality of $\mathcal{L}_i$ is $\frac{n}{q}$. Note that for $i \in \{C\ell+1, \hdots, (C+1)\ell\}$ with an integer $C\geq 0$, the corresponding $\mathcal{L}_i$'s are the same.
Here note $S\in\mathbb{R}^{q\ell\times n}$ can be written as 
\[
\small
S = \begin{bmatrix}S_1 & & \\ & \ddots & \\ & & S_{q}\end{bmatrix} = \begin{bmatrix}\phi_1 & & \\ & \ddots & \\ & & \phi_{q}\end{bmatrix}\begin{bmatrix}D_1 & & \\ & \ddots & \\ & & D_{q}\end{bmatrix} =: \phi D
\]
where each $S_\alpha = \phi_\alpha D_\alpha\in\mathbb{R}^{\ell\times n/q}$ ($\alpha\in[q]$) is a SpEmb matrix defined as in Section~\ref{sec: SpEmb}.
Note that $\phi$ is a block diagonal matrix and $\phi_{ij} = 0$ when $j\notin \mathcal{L}_i$. For $1\leq s,t \leq k$,
\begin{align*}
x_{st} =~& \sum_{i=1}^{q\ell}\left( \sum_{j=1}^n\phi_{ij}d_j(PU)_{js} \right)\left(\sum_{j=1}^n \phi_{ij}d_j(PU)_{jt}\right)\\
=~& \sum_{i=1}^{q\ell}\sum_{j_1,j_2 =1}^n\phi_{ij_1}\phi_{ij_2} d_{j_1}d_{j_2}(PU)_{j_1s}(PU)_{j_2t},
\end{align*}
and its expectation:
\begin{align*}
\mathbb{E}[x_{st}]
 =& \sum_{i=1}^{q\ell}\sum_{j_1,j_2 = 1}^n \mathbb{E}[\phi_{ij_1}\phi_{ij_2}]\mathbb{E}[d_{j_1}d_{j_2}]\mathbb{E}[(PU)_{j_1s}(PU)_{j_2t}] \\
=& \sum_{i=1}^{q\ell}\sum_{j=1}^n\mathbb{E}[\phi_{ij}^2]\mathbb{E}[(PU)_{js}(PU)_{jt}] \\
=& \sum_{i=1}^{q\ell}\sum_{j\in \mathcal{L}_i} \mathbb{E}[\phi_{ij}^2]\cdot \dfrac{1}{n}\sum_{j=1}^nU_{js}U_{jt}\\
=& q\ell\cdot \dfrac{n}{q}\dfrac{1}{\ell}\cdot \dfrac{1}{n}\sum_{j=1}^nU_{js}U_{jt}\\
=& \delta_{st}.
\end{align*}
The second equality holds since $\mathbb{E}[d_{j_1}d_{j_2}] = 1$ if $j_1=j_2$ and $0$ otherwise; the third equality is because $\phi_{ij} = 0$ for $j\notin \mathcal{L}_i$ and $\mathbb{E}[(PU)_{js}(PU)_{jt}] = 1/n \sum_{j=1}^n U_{js}U_{jt}$.
We also need
\begin{align*}
 \mathbb{E}[x_{st}^2]
=& \sum_{i,i' =1}^{q\ell}\sum_{j_1, j_2, j_1', j_2'=1}^n\mathbb{E}[\phi_{ij_1}\phi_{ij_2}\phi_{i'j_1'}\phi_{i'j_2'}] ~\cdot \\
& \mathbb{E}[d_{j_1}d_{j_2}d_{j_1'}d_{j_2'}]
	\mathbb{E}[(PU)_{j_1s}(PU)_{j_2t}(PU)_{j_1's}(PU)_{j_2't}].
\end{align*}
Note that
\begin{eqnarray*}
\mathbb{E}\left[d_{j_1}d_{j_2}d_{j_1'}d_{j_2'}\right] &=& \left\{\begin{alignedat}{2}
			&1& ~ ~~~ &j_1 = j_2 = j_1' = j_2',\\
			&1& ~ ~~~ &j_1 = j_2 \neq j_1' = j_2',\\
			&1& ~ ~~~ &j_1 = j_1' \neq j_2 = j_2',\\
			&1& ~ ~~~ &j_1 = j_2' \neq j_1' = j_2,\\
			&0& ~ ~~~ &\text{otherwise,}
			\end{alignedat}\right.
\end{eqnarray*}
thus it is required to discuss $\mathbb{E}[\phi_{ij_1}\phi_{ij_2}\phi_{i'j_1'}\phi_{i'j_2'}]$ and $\mathbb{E}[(PU)_{j_1s}(PU)_{j_2t}(PU)_{j_1's}(PU)_{j_2't}]$ for each case:
\\
\\
\textbf{1.} $j_1 = j_2 = j_1' = j_2'$: Note that each column of $\phi$ has exactly one $1$, so $\phi_{ij}^2\phi_{i'j}^2 = 0$ if $i\neq i'$, then
\begin{align}
\label{eq: app1}
~& \sum_{i,i'=1}^{q\ell}\sum_{j=1}^n \mathbb{E}[\phi_{ij}^2\phi_{i'j}^2] \mathbb{E}[(PU)_{js}^2(PU)_{jt}^2]\nonumber\\
=~& \sum_{i=1}^{q\ell}\sum_{j=1}^n \mathbb{E}[\phi_{ij}^4]\cdot \dfrac{1}{n}\sum_{j=1}^nU_{js}^2U_{jt}^2 ~=~ \sum_{j=1}^nU_{js}^2U_{jt}^2.
\end{align}
\textbf{2.} $j_1 = j_2 \neq j_1' = j_2'$:
\begin{align*}
~&\mathbb{E}[(PU)_{js}(PU)_{jt}(PU)_{j's}(PU)_{j't}] \\
=~& \dfrac{1}{n(n-1)}\sum_{j\neq j'}U_{js}U_{jt}U_{j's}U_{j't}~;\\
~&\sum_{i,i'=1}^{q\ell}\sum_{j\neq j'}^n\mathbb{E}[\phi_{ij}^2\phi_{i'j'}^2] \\
=~& \sum_{i=1}^{q\ell}\sum_{j\neq j'}^n\mathbb{E}[\phi_{ij}^2\phi_{ij'}^2]+\sum_{i\neq i'}^{q\ell}\sum_{j\neq j'}^n\mathbb{E}[\phi_{ij}^2\phi_{i'j'}^2]
\\
=~&\sum_{i=1}^{q\ell}\sum_{j,j'\in\mathcal{L}_i, j\neq j'}\mathbb{E}[\phi_{ij}^2\phi_{ij'}^2] + \sum_{i\neq i'}^{q\ell}\sum_{j\in\mathcal{L}_i, j'\in\mathcal{L}_{i'}, j\neq j'}\mathbb{E}[\phi_{ij}^2\phi_{i'j'}^2]
\\
=~& q\ell \cdot\dfrac{n}{q} (\dfrac{n}{q} -1)\cdot\dfrac{1}{\ell^2} +
\left( \sum_{i\neq i', \mathcal{L}_i\neq\mathcal{L}_{i'}} \sum_{j\in\mathcal{L}_i, j'\in\mathcal{L}_{i'}}\mathbb{E}[\phi_{ij}^2\phi_{i'j'}^2] \right. \\
~&  \left. + \sum_{i\neq i', \mathcal{L}_i=\mathcal{L}_{i'}}\sum_{j,j'\in\mathcal{L}_i, j\neq j'}\mathbb{E}[\phi_{ij}^2\phi_{i'j'}^2] \right)
\\
=~& \dfrac{n^2-nq}{q\ell}  + \left( q(q -1) \ell^2\dfrac{n^2}{q^2}\dfrac{1}{\ell^2}+  q\ell(\ell-1)\dfrac{n}{q} \left(\dfrac{n}{q}-1\right)\dfrac{1}{\ell^2} \right)
\\
=~& n^2-n,
\end{align*}
where the second equality holds since $\phi_{ij}^2\phi_{i'j'}^2 = 0$ if $j\notin \mathcal{L}_i$ or $j'\notin\mathcal{L}_{i'}$; the
third equality is by partitioning $i\neq i'$ into $\mathcal{L}_i \neq \mathcal{L}_{i'}$ and $\mathcal{L}_i = \mathcal{L}_{i'}$. Note there are $q$ different $\mathcal{L}_i$'s and $\ell$ choices of $i$ fall in the same $ \mathcal{L}_i$; for each $i$, as long as $j\in\mathcal{L}_i$, then $\mathbb{E}[\phi_{ij}^2] = \dfrac{1}{\ell}$, and for $j\neq j'$, $\phi_{ij}$ and $\phi_{i'j'}$ are independent, then the fourth equality follows. Therefore,
\begin{align}
\label{eq: app2}
 ~& \sum_{i,i'=1}^{q\ell}\sum_{j\neq j'}^n\mathbb{E}[\phi_{ij}^2\phi_{i'j'}^2]\mathbb{E}[(PU)_{js}(PU)_{jt}(PU)_{j's}(PU)_{j't}] \nonumber\\
 =~& \dfrac{1}{n(n-1)}\sum_{j\neq j'}U_{js}U_{jt}U_{j's}U_{j't} \cdot \left(\sum_{i,i'=1}^{q\ell}\sum_{j\neq j'}^n\mathbb{E}[\phi_{ij}^2\phi_{i'j'}^2] \right) \nonumber\\
=~& \sum_{j\neq j'}U_{js}U_{jt}U_{j's}U_{j't}
\end{align}
\textbf{3.} $j_1 = j_1' \neq j_2 = j_2'$:
\[
\mathbb{E}[(PU)_{j_1s}^2(PU)_{j_2t}^2] = \dfrac{1}{n(n-1)}\sum_{j_1\neq j_2}U_{j_1s}^2U_{j_2t}^2;
\]
and since $\phi$ has only one $1$ in each column, then for $i\neq i'$, at least one of $\phi_{ij_1}, \phi_{i'j_1}$ is $0$, and $\phi_{ij_1}\phi_{ij_2}\phi_{i'j_1}\phi_{i'j_2} = 0$, thus
\begin{align*}
~& \sum_{i,i'=1}^{q\ell}\sum_{j_1\neq j_2}^n\mathbb{E}[\phi_{ij_1}\phi_{ij_2}\phi_{i'j_1}\phi_{i'j_2}] \\
=~& \sum_{i=1}^{q\ell}\sum_{j_1\neq j_2}^n\mathbb{E}[\phi_{ij_1}^2\phi_{ij_2}^2] + \sum_{i\neq i'}^{q\ell}\sum_{j_1\neq j_2}^n\mathbb{E}[\phi_{ij_1}\phi_{ij_2}\phi_{i'j_1}\phi_{i'j_2}]  \\
=~&\sum_{i=1}^{q\ell}\sum_{j_1\neq j_2, ~ j_1, j_2\in\mathcal{L}_i}\mathbb{E}[\phi_{ij_1}^2\phi_{ij_2}^2] + 0\\
=~&q\ell \cdot\dfrac{n}{q} \left(\dfrac{n}{q}-1\right)\cdot \dfrac{1}{\ell^2}\\
=~& \dfrac{n}{\ell}\left(\dfrac{n}{q}-1\right).
\end{align*}
Therefore,
\begin{align}
\label{eq: app3}
& \sum_{i,i'=1}^{q\ell}\sum_{j_1\neq j_2}^n\mathbb{E}[\phi_{ij_1}\phi_{ij_2}\phi_{i'j_1}\phi_{i'j_2}] \mathbb{E}[(PU)_{j_1s}^2(PU)_{j_2t}^2] \nonumber\\
=& \dfrac{1}{n(n-1)} \sum_{j_1\neq j_2} U_{j_1s}^2U_{j_2t}^2\cdot \left( \sum_{i,i'=1}^{q\ell}\sum_{j_1\neq j_2}^n\mathbb{E}[\phi_{ij_1}\phi_{ij_2}\phi_{i'j_1}\phi_{i'j_2}] \right) \nonumber\\
=& \dfrac{n/q-1}{\ell(n-1)}\sum_{j_1\neq j_2}U_{j_1s}^2U_{j_2t}^2.
\end{align}
\textbf{4.} $j_1 = j_2' \neq j_1' = j_2$:
\begin{align*}
~& \mathbb{E}[(PU)_{js}(PU)_{j't}(PU)_{j's}(PU)_{jt}] \\
=~& \dfrac{1}{n(n-1)}\sum_{j\neq j'}U_{js}U_{jt}U_{j's}U_{j't}~;
\\
~&\sum_{i,i'=1}^{q\ell}\sum_{j\neq j'}^n\mathbb{E}[\phi_{ij}\phi_{ij'}\phi_{i'j'}\phi_{i'j}] \\
=~& \sum_{i=1}^{q\ell}\sum_{j\neq j'}^n\mathbb{E}[\phi_{ij}^2\phi_{ij'}^2] +\sum_{i\neq i'}^n\sum_{j\neq j'}^n\mathbb{E}[\phi_{ij}\phi_{ij'}\phi_{i'j'}\phi_{i'j}]\\
=~& \sum_{i=1}^{q\ell}\sum_{j\neq j', ~ j,j'\in\mathcal{L}_i}\mathbb{E}[\phi_{ij}^2\phi_{ij'}^2] +0
\\
=~& q\ell \cdot \dfrac{n}{q} \left(\dfrac{n}{q}  -1\right) \cdot \dfrac{1}{\ell^2} \\
=~& \dfrac{n}{\ell}\left(\dfrac{n}{q}-1\right).
\end{align*}
Therefore,
\begin{eqnarray}
\label{eq: app4}
&& \sum_{i,i'=1}^{q\ell}\sum_{j\neq j'}^n  \mathbb{E}[\phi_{ij}\phi_{ij'}\phi_{i'j'}\phi_{i'j}] \nonumber\\
&& ~~~~~~~~~~~~~~\cdot\mathbb{E}[(PU)_{js}(PU)_{jt}(PU)_{j's}(PU)_{j't}]\nonumber\\
&=& \dfrac{n/q-1}{\ell(n-1)}\sum_{j\neq j'}U_{js}U_{jt}U_{j's}U_{j't}.
\end{eqnarray}
Now combining (\ref{eq: app1}), (\ref{eq: app2}), (\ref{eq: app3}) and (\ref{eq: app4}),  we obtain
\begin{align*}
~ & \mathbb{E}[x_{st}^2]\\
=~ & \sum_{j=1}^nU_{js}^2U_{jt}^2 + \sum_{j\neq j'}U_{js}U_{jt}U_{j's}U_{j't} \\
  ~ & + \dfrac{n/q-1}{(n-1)\ell}\sum_{j_1\neq j_2}U_{j_1s}^2U_{j_2t}^2  + \dfrac{n/q-1}{(n-1)\ell}\sum_{j\neq j'}U_{js}U_{jt}U_{j's}U_{j't}\\
=~ & \left(\sum_{j=1}^n U_{js}U_{jt}\right)^2 \\
 ~& + \dfrac{n/q-1}{(n-1)\ell}\left(\left(\sum_{j=1}^nU_{js}^2 \right)\left(\sum_{j=1}^nU_{jt}^2 \right)- \sum_{j=1}^nU_{js}^2U_{jt}^2  \right)\\
~& + \dfrac{n/q -1}{(n-1)\ell}\left(\left(\sum_{j=1}^nU_{js}U_{jt}\right)^2-\sum_{j=1}^n U_{js}^2U_{jt}^2\right)\\
~\leq& \left\{\begin{alignedat}{2}
&1+ \dfrac{2(n/q-1)}{(n-1)\ell}& ~~~~~~ & s=t\\
&\dfrac{n/q-1}{(n-1)\ell}& ~~~~~~ & s\neq t
\end{alignedat}\right.\\
~\leq& \left\{\begin{alignedat}{2}
&1+\dfrac{2}{q\ell}& ~~~~~~ & s=t\\
&\dfrac{1}{q\ell}& ~~~~~~ & s\neq t
\end{alignedat}\right.
\end{align*}
The first inequality holds since $\dfrac{n/q-1}{(n-1)\ell}\geq 0$; the last inequality follows since $\dfrac{n/q-1}{(n-1)\ell} = \dfrac{1}{q\ell}\dfrac{n-q}{n-1}\leq \dfrac{1}{q\ell}$ as $q\geq 1$.
Therefore, we have
\begin{align*}
\mathbb{E}[\norm{X-I}_F^2] =& \sum_{s,t =1}^k\mathbb{E}[(x_{st}-\delta_{st})^2]
=  \sum_{s,t=1}^k\left( \mathbb{E}[x_{st}^2]-\delta_{st}\right) \\
\leq& \sum_{s=1}^k \dfrac{2 }{q\ell} +\sum_{s\neq t}^k \dfrac{1}{q\ell}
= \dfrac{k^2+k}{q\ell}.
\end{align*}
Applying Markov's inequality,  we obtain
\begin{align*}
~& \mathbb{P}\left[\norm{X-I}_2\leq \varepsilon\right]
\geq~ \mathbb{P}\left[\norm{X-I}^2_F\leq \varepsilon^2\right]
\geq~ 1-\dfrac{k^2+k }{q\ell \varepsilon^2}.
\end{align*}
By taking
$q\ell\geq\dfrac{k^2+k}{\varepsilon^2\delta}
$, we conclude that
\[\mathbb{P}\left[\norm{(SPU)^T(SPU)-I}_2\leq\varepsilon\right]\geq 1-\delta.\]
\end{proof}

\begin{remark}
Lemma~\ref{lemma: SpFD1} is to find a condition for a SpFD matrix to qualify as a $(1\pm\varepsilon)$ $\ell_2$-subspace embedding for $U$. By setting $q=1$, $SPA\in\mathbb{R}^{\ell\times d}$ and no FD is required, SpFD becomes SpEmb. Then the bound derived here for SpFD agrees with that for a general SpEmb matrix (Theorem 2.5 in \cite{Woodruff2014}). 
\end{remark}

Next, we move on to discuss the FD part in our algorithm and to determine the bound on the final sketch size $\ell$. After obtaining $SPA$, the next step is applying FD on $SPA$ to generate the sketch $B$, the orthonormal basis $V$ and eventually the approximation $\tilde{A}_k$.

As mentioned in Section~\ref{sec: FD}, the FD process in Algorithm~\ref{alg: fd} and Algorithm~\ref{alg: SpFD} performs one last round of SVD and $B$ update at the end. Nevertheless, the same properties can be easily derived following the same procedure in \cite{Liberty2013, Ghashami2015}. Then for the FD process embedded in Algorithm~\ref{alg: SpFD} of SpFD, we have the following properties: let
\[\Delta = \sum_{i=1}^{q-1}\delta_i,
\]
Property 1: For any vector $x\in\mathbb{R}^d$, $\norm{SPAx}_2^2 - \norm{Bx}_2^2 \geq 0$.
\\
Property 2: For any unit vector $x\in\mathbb{R}^d$, $\norm{SPAx}_2^2- \norm{Bx}_2^2$ $\leq \Delta$.
\\
Property 3: $\norm{SPA}_F^2 - \norm{B}_F^2 \geq \ell\Delta$.

Besides Lemma~\ref{lemma: SpFD1} and the properties listed above, we need one more lemma to derive the main theorem.

\begin{lemma}
\label{lemma: SpFD2}
For any matrix $C\in\mathbb{R}^{n\times d}$, let $S\in\mathbb{R}^{q\ell\times n}$ be defined as (\ref{eq: S}) where $q$ and $\ell$ are positive integers and $P$ be an $n\times n$ random permutation matrix defined earlier,
then
$\mathbb{E}\left[ \norm{SPC}_F^2 \right] =\norm{C}_F^2$.
Furthermore, by applying Markov's inequality, with probability at least $1-\delta$ ($0<\delta<1$),
\[
\norm{SPC}_F^2\leq \dfrac{1}{\delta}\norm{C}_F^2.
\]
\end{lemma}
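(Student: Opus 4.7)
The plan is to establish the expectation identity by a direct entry-wise computation, then conclude via Markov's inequality. The computation reuses the structure exploited in the proof of Lemma~\ref{lemma: SpFD1}: write $S=\phi D$ with $\phi$ the block-diagonal indicator matrix and $D$ the diagonal sign matrix, and use the fact that $\phi$ has exactly one nonzero in each column, with $\mathbb{E}[\phi_{ij}]=1/\ell$ when $j\in\mathcal{L}_i$ and $0$ otherwise.

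First I would expand the Frobenius norm column-by-column. Fixing a column index $t\in[d]$, each entry of $SPC$ takes the form
\[
(SPC)_{it} \;=\; \sum_{j=1}^n \phi_{ij}\,d_j\,(PC)_{jt},
\]
so squaring and taking expectations, the cross-terms $\mathbb{E}[d_{j_1}d_{j_2}]$ vanish unless $j_1=j_2$ (by independence and the symmetry of the signs). This collapses the double sum to the diagonal:
\[
\mathbb{E}\!\left[(SPC)_{it}^2\right] \;=\; \sum_{j=1}^n \mathbb{E}[\phi_{ij}^2]\,\mathbb{E}[(PC)_{jt}^2].
\]
Next, I would evaluate the two remaining expectations. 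Since $\phi_{ij}\in\{0,1\}$, $\mathbb{E}[\phi_{ij}^2]=\mathbb{E}[\phi_{ij}]=1/\ell$ for $j\in\mathcal{L}_i$ and $0$ otherwise, contributing $|\mathcal{L}_i|/\ell = n/(q\ell)$ when summed over $j$. And by the uniformity of the random permutation, $\mathbb{E}[(PC)_{jt}^2]=\frac{1}{n}\sum_{j'=1}^n C_{j't}^2$, which does not depend on $j$.

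Combining these pieces and summing over $i\in[q\ell]$ and $t\in[d]$, the factor of $q\ell$ cancels exactly, giving
\[
\mathbb{E}\!\left[\|SPC\|_F^2\right] \;=\; \sum_{t=1}^d \sum_{j'=1}^n C_{j't}^2 \;=\; \|C\|_F^2.
\]
The second statement is then immediate: since $\|SPC\|_F^2$ is a nonnegative random variable, Markov's inequality yields
\[
\mathbb{P}\!\left[\|SPC\|_F^2 > \tfrac{1}{\delta}\|C\|_F^2\right] \;\leq\; \delta\,\frac{\mathbb{E}[\|SPC\|_F^2]}{\|C\|_F^2} \;=\; \delta,
\]
so $\|SPC\|_F^2\leq \tfrac{1}{\delta}\|C\|_F^2$ holds with probability at least $1-\delta$. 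There is essentially no serious obstacle here; the only thing to be careful about is correctly tracking the contributions from $\phi$ (which use the block structure and the $1/\ell$ probability per column) separately from the contributions of $P$ (which are column-independent and uniform over entries), and verifying that the two $1/\ell$ and $1/n$ factors combine with the $q\ell$ row count and $n/q$ block size to cancel exactly.
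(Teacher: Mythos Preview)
Your proposal is correct and matches the paper's approach: the computation is exactly the diagonal case $s=t$ of the $\mathbb{E}[x_{st}]=\delta_{st}$ calculation already carried out in the proof of Lemma~\ref{lemma: SpFD1} (with $U$ replaced by a general $C$, which requires no orthonormality), followed by Markov's inequality. The paper defers the proof to the supplement, but the technique is identical to what you describe.
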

This lemma states that without any requirement on the size of the sketching matrix, the expected value of the $F$-norm of the sketched matrix is the $F$-norm of the original matrix. The proof is provided in Section 1 of the  supplemental material. Now we are ready to derive our main theorem.

\begin{theorem}
Given a matrix $A\in\mathbb{R}^{n\times d}$ and let $A_k$ denote the best rank-$k$ approximation of $A$.
For any $\varepsilon, \delta\in(0,1)$,  let $S\in\mathbb{R}^{q\ell\times n}$ be defined as (\ref{eq: S})
with
\[
q\ell \geq \dfrac{12(k^2+k)}{\varepsilon^2\delta} \text{~ and ~~} \ell = \left\lceil k + \dfrac{3k}{\varepsilon\delta}\right\rceil
\]
and $P$ be an $n\times n$ random permutation matrix
(i.e., for each $e_j$ with $j\in[n]$, $\mathbb{P}[Pe_j = e_i] = 1/n$ for $i\in[n]$).
Let $\tilde{A}_k$ and $B\in\mathbb{R}^{\ell\times d}$ be the output of Algorithm~\ref{alg: SpFD},
then with probability at least $1-\delta$,
\[
\norm{A-\tilde{A}_k}_F^2 = \norm{A-\Pi_{B, k}^F(A)}_F^2\leq \norm{A-A_k}_F^2 + \varepsilon\norm{A}_F^2.
\label{eq: error}
\]
\label{thm: main}
\end{theorem}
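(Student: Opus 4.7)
The plan combines the two random-matrix facts of Lemmas~\ref{lemma: SpFD1} and \ref{lemma: SpFD2} with the deterministic FD shrinkage Properties~1--3 applied to the intermediate sketch $C:=SPA$. By (\ref{eq: Fproof}), $\norm{A-\tilde A_k}_F^2=\norm{A-\Pi_{B,k}^F(A)}_F^2$, and by the optimality of $\Pi_{B,k}^F(A)$ inside the row space of $B$, this quantity is bounded above by $\norm{A-AYY^T}_F^2=\norm{A}_F^2-\norm{AY}_F^2$ for any orthonormal $Y\in\mathbb R^{d\times k}$ whose columns lie in that row space. Hence it suffices to exhibit one such $Y$ with $\norm{AY}_F^2\ge\norm{A_k}_F^2-\varepsilon\norm{A}_F^2$. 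The natural candidate is an orthonormal basis $Y$ for $\mathrm{col}(VV^TV_k)$, where $V_k\in\mathbb R^{d\times k}$ collects the top-$k$ right singular vectors of $A$; that is, the projection of $V_k$ onto $\mathrm{row}(B)=\mathrm{col}(V)$.

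Next, I would fix two good events. Apply Lemma~\ref{lemma: SpFD1} with $U=(U_A)_k$ and parameters $\varepsilon_1=\Theta(\varepsilon)$, $\delta_1=\delta/2$, chosen so that the hypothesis $q\ell\ge 12(k^2+k)/(\varepsilon^2\delta)$ implies the required $q\ell\ge(k^2+k)/(\varepsilon_1^2\delta_1)$. This yields, with probability $\ge 1-\delta_1$, that $SP$ is a $(1\pm\varepsilon_1)$-subspace embedding for $\mathrm{col}((U_A)_k)$, so $\norm{SPA_kX}_F^2=(1\pm\varepsilon_1)\norm{A_kX}_F^2$ for every $X$. Apply Lemma~\ref{lemma: SpFD2} to $A$ with $\delta_2=\delta/2$ to obtain $\norm{SPA}_F^2\le 2\delta^{-1}\norm{A}_F^2$. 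A union bound gives an event of probability $\ge 1-\delta$ on which both estimates hold simultaneously. Property~3 then bounds the cumulative shrinkage $\Delta=\sum_i\delta_i\le\ell^{-1}\norm{SPA}_F^2=O(\ell^{-1}\delta^{-1})\norm{A}_F^2$.

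Finally, I would run the FD comparison on $C=SPA$ against both $Y$ and $V_k$: Properties~1 and 2 give $\norm{SPAZ}_F^2-k\Delta\le\norm{BZ}_F^2\le\norm{SPAZ}_F^2$ for every orthonormal $Z\in\mathbb R^{d\times k}$. Combining this with the classical FD singular-value shrinkage argument --- which tracks how much of the top-$k$ right singular subspace of $C$ can leak outside $\mathrm{row}(B)$ --- produces $\norm{SPAY}_F^2\ge\norm{SPAV_k}_F^2-O(k\Delta)$; the subspace embedding applied to the column space of $AV_k=(U_A)_k\Sigma_k$ then converts $\norm{SPAV_k}_F^2$ to $(1-\varepsilon_1)\norm{A_k}_F^2$. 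Collecting terms delivers $\norm{AY}_F^2\ge\norm{A_k}_F^2-O(\varepsilon_1)\norm{A_k}_F^2-O(k\Delta)$; plugging $\ell\ge k+3k/(\varepsilon\delta)$ gives $k\Delta=O(k/(\ell\delta))\norm{A}_F^2=O(\varepsilon)\norm{A}_F^2$, and tuning the absolute constant in $\varepsilon_1=\Theta(\varepsilon)$ completes the bound $\norm{A-\tilde A_k}_F^2\le\norm{A-A_k}_F^2+\varepsilon\norm{A}_F^2$.

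The main obstacle will be establishing the inequality $\norm{SPAY}_F^2\ge\norm{SPAV_k}_F^2-O(k\Delta)$ for $Y$ the orthonormalization of $VV^TV_k$: it requires showing both that $VV^TV_k$ is non-degenerate (so that the orthonormalization is legitimate) and that projecting $V_k$ onto $\mathrm{row}(B)$ loses at most $O(k\Delta)$ units of the top-$k$ energy of $SPA$, which amounts to a careful adaptation of the classical FD analysis from $A$ itself to the intermediate-sketched matrix $C=SPA$. Once that step is in hand, the remaining bookkeeping is the standard translation back to $A$ via the $(1\pm\varepsilon_1)$ subspace embedding.
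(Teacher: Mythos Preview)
Your plan diverges from the paper at the choice of test directions and, as written, has a real gap in the step you call ``collecting terms.'' Your chain lands at a lower bound on $\norm{SPAY}_F^2$ (indeed $\norm{SPAY}_F^2\ge\norm{BY}_F^2\ge\norm{BV_k}_F^2\ge\norm{SPAV_k}_F^2-k\Delta\ge(1-\varepsilon_1)\norm{A_k}_F^2-k\Delta$, using $B\,VV^TV_k=BV_k$ and $\norm{R}_2\le1$ in the QR of $VV^TV_k$), but you never explain how this becomes a lower bound on $\norm{AY}_F^2$. The only embedding event you have fixed is for $\mathrm{col}((U_A)_k)$, and the columns of $AY$ do not lie in that span; nor can you condition a fresh embedding on the left singular space of $AY$, since $Y$ depends on $S,P$ through $B$. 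So the ``main obstacle'' is not the non-degeneracy of $VV^TV_k$ you flagged, but this unbridged passage from $\norm{SPAY}_F^2$ back to $\norm{AY}_F^2$.

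The paper sidesteps this by taking the test matrix to be the top-$k$ right singular vectors of $B$ itself (call them $V_k^B$), which are already orthonormal and already in $\mathrm{row}(B)$, so your non-degeneracy worry disappears. It then invokes Lemma~\ref{lemma: SpFD1} \emph{twice}: once on the left singular vectors of $AV_k^B$ to pass from $\norm{AV_k^B}_F^2$ down to $\norm{SPAV_k^B}_F^2$, and once on the fixed $W_k=(U_A)_k$ to pass from $\norm{SPAQ_k}_F^2$ back up to $\norm{AQ_k}_F^2=\norm{A_k}_F^2$, with the pivotal optimality inequality $\norm{BV_k^B}_F^2\ge\norm{BQ_k}_F^2$ in between. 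This double use is exactly why the hypothesis carries the constant $12$ (namely $\varepsilon_0=\varepsilon/2$, $\delta_0=\delta_1=\delta/3$). A secondary difference: the paper applies Lemma~\ref{lemma: SpFD2} to the tail $W_{\rho-k}\Lambda_{\rho-k}$ rather than to $A$, and combines Properties~2 and~3 so that the head term $\norm{SPAQ_k}_F^2$ cancels on both sides, yielding the sharper $\Delta\le\norm{A-A_k}_F^2/\bigl((\ell-k)\delta_1\bigr)$. Your cruder $\Delta\le O((\ell\delta)^{-1})\norm{A}_F^2$ would still be enough for the stated additive bound, but only after the gap above is closed --- most simply by switching to $V_k^B$ as the test matrix and adding the second invocation of Lemma~\ref{lemma: SpFD1}.
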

\begin{proof}
Let
\begin{eqnarray*}
A &=& W\Lambda Q^T = \begin{bmatrix} W_k & W_{\rho-k}\end{bmatrix} \begin{bmatrix}\Lambda_k & \\ & \Lambda_{\rho-k}\end{bmatrix}\begin{bmatrix} Q_k^T & Q_{\rho-k}^T\end{bmatrix}\\
B &=& U\Sigma V^T = \begin{bmatrix} U_k & U_{\beta-k}\end{bmatrix} \begin{bmatrix}\Sigma_k & \\ & \Sigma_{\beta-k}\end{bmatrix}\begin{bmatrix} V_k^T & V_{\beta-k}^T\end{bmatrix}
\end{eqnarray*}
 be the singular value decompositions of $A\in\mathbb{R}^{n\times d} $ and $B\in\mathbb{R}^{\ell\times d}$, $\rho$ = rank($A$) and $\beta$ = rank($B$) and the singular values are listed in a non-increasing order in $\Lambda$ and $\Sigma$. Thus,
\begin{eqnarray}
\norm{A-\Pi_{B,k}^F(A)}_F^2 &=& \min_{X\in\mathbb{R}^{n\times \ell},~ rank(X)\leq k}\norm{A-XB}_F^2\nonumber\\
		&\leq& \norm{A-AV_kV_k^T}_F^2 \nonumber \\
&=& \norm{A-A_k}_F^2 + \norm{A_k}_F^2 - \norm{AV_k}_F^2.
\label{eq: SFD1}
\end{eqnarray}
The inequality holds by taking $X = AV_k\Sigma_k^{-1}U_k^T$; the last equality holds since $(A-AV_kV_k^T)V_kV_k^TA^T = 0$ which implies $\norm{A-AV_kV_k^T}_F^2 = \norm{A}_F^2-\norm{AV_k}_F^2$ (by Lemma 2.1 in \cite{Boutsidis2011}).

Let $U_{AV_k}\Sigma_{AV_k}V_{AV_k}$ be the SVD of $AV_k$ where $U_{AV_k}\in\mathbb{R}^{n\times k}$ is a matrix whose columns composed of the left singular vectors of $AV_k$; assume
\begin{equation}
q\ell\geq \dfrac{k^2+k}{\varepsilon_0^2\delta_0},
\label{eq: nalpha}
\end{equation}
for some $\varepsilon_0, \delta_0\in (0,1)$, then by Lemma~\ref{lemma: SpFD1}, with probability at least $1-\delta_0$, all singular values of $SPU_{AV_k}$ are in $[\sqrt{1-\varepsilon_0}, \sqrt{1+\varepsilon_0}]$. Then with probability at least $1-\delta_0$,
\begin{align*}
	\norm{SPAV_k}_F^2 -\norm{AV_k}_F^2
	\leq~
	& \left(\norm{SPU_{AV_k}}_2^2 -1\right)\norm{AV_k}_F^2\\
	\leq~& \varepsilon_0\norm{AV_k}_F^2,
\end{align*}
which implies
\begin{equation}
\norm{AV_k}_F^2 \geq \dfrac{1}{1+\varepsilon_0}\norm{SPAV_k}_F^2.
\label{eq: SFD2}
\end{equation}
Conditioned on (\ref{eq: SFD2}), then (\ref{eq: SFD1}) becomes
\begin{align}
	&\norm{A-\Pi_{B,k}^F(A)}_F^2\nonumber\\
	\leq& \norm{A-A_k}_F^2 + \norm{A_k}_F^2 -\dfrac{1}{1+\varepsilon_0}\norm{SPAV_k}_F^2\nonumber\\
	\leq& \norm{A-A_k}_F^2 + \norm{AQ_k}_F^2 -\dfrac{1}{1+\varepsilon_0}\norm{BV_k}_F^2\nonumber\\
	\leq& \norm{A-A_k}_F^2 + \norm{AQ_k}_F^2 -\dfrac{1}{1+\varepsilon_0}\norm{BQ_k}_F^2\nonumber\\
	=& \dfrac{1}{1+\varepsilon_0}\left(\norm{AQ_k}_F^2 - \norm{BQ_k}_F^2\right) + \dfrac{\varepsilon_0}{1+\varepsilon_0}\norm{AQ_k}_F^2 \nonumber\\
	& + \norm{A-A_k}_F^2
\label{eq: SFD3}
\end{align}
The second inequality is based on $\norm{A_k}_F^2 = \norm{AQ_k}_F^2$ and (Property 1). The third inequality follows since $V_k$ contains the top $k$ right singular vectors of $B$ and thus $\norm{BV_k}_F^2 \geq \norm{BQ_k}_F^2$.

To bound $\norm{AQ_k}_F^2 - \norm{BQ_k}_F^2$, we use a bridge term $\norm{SPAQ_k}_F^2$ and bound the singular values of $SPW_k$ using Lemma~\ref{lemma: SpFD1} again. Still assume (\ref{eq: nalpha}),
thus with probability at least $1-\delta_0$, for all $1 \leq i\leq k$,
\begin{align*}
\sqrt{1-\varepsilon_0}\leq~& \sigma_{i}(SPW_k)\leq \sqrt{1+\varepsilon_0},
\\
\dfrac{1}{\sqrt{1+\varepsilon_0}}\leq~& \sigma_i((SPW_k)^\dagger) \leq \dfrac{1}{\sqrt{1-\varepsilon_0}}.
\end{align*}
This implies that $SPW_k\in\mathbb{R}^{q\ell \times k}$ is of full column rank and $(SPW_k)^\dagger SPW_k = I_k$, then
\begin{align*}
\norm{AQ_k}_F^2 =~& \norm{\Lambda_k}_F^2 =  \norm{(SPW_k)^\dagger SPW_k\Lambda_k}_F^2\\
				\leq~& \norm{(SPW_k)^\dagger}_2^2\norm{SPW_k\Lambda_k}_F^2 \\
				\leq~&  \dfrac{1}{1-\varepsilon_0}\norm{SPAQ_k}_F^2;\\
\norm{SPAQ_k}_F^2 \leq~& \norm{SPW_k}_2^2\norm{\Lambda_k}_F^2
 			\leq~ (1+\varepsilon_0)\norm{AQ_k}_F^2.
\end{align*}
Together they imply with probability at least $1-\delta_0$,
\begin{equation}
 \dfrac{1}{1+\varepsilon_0}\norm{SPAQ_k}_F^2  \leq \norm{AQ_k}_F^2 \leq \dfrac{1}{1-\varepsilon_0}\norm{SPAQ_k}_F^2.
\label{eq: SFD4}
\end{equation}
Conditioned on (\ref{eq: SFD2}) and (\ref{eq: SFD4}), then (\ref{eq: SFD3}) continues
\begin{align}
& \norm{A-\Pi_{B,k}^F(A)}_F^2\nonumber\\
	\leq~ &\dfrac{1}{1+\varepsilon_0}\left(\varepsilon_0\norm{AQ_k}_F^2+ (1-\varepsilon_0)\norm{AQ_k}_F^2 -\norm{BQ_k}_F^2 \right) \nonumber\\
	& + \dfrac{\varepsilon_0}{1+\varepsilon_0}\norm{AQ_k}_F^2 + \norm{A-A_k}_F^2\nonumber\\
	\leq~ &\dfrac{1}{1+\varepsilon_0}\left(\varepsilon_0\norm{AQ_k}_F^2 + \norm{SPAQ_k}_F^2-\norm{BQ_k}_F^2\right)\nonumber\\
	& + \dfrac{\varepsilon_0}{1+\varepsilon_0}\norm{AQ_k}_F^2 + \norm{A-A_k}_F^2\nonumber\\
	=~ & \dfrac{1}{1+\varepsilon_0}\left(\norm{SPAQ_k}_F^2-\norm{BQ_k}_F^2\right)\nonumber\\
	& + \dfrac{2\varepsilon_0}{1+\varepsilon_0}\norm{AQ_k}_F^2 + \norm{A-A_k}_F^2\nonumber\\
	\leq~ &\dfrac{1}{1+\varepsilon_0}\cdot k\Delta + \dfrac{2\varepsilon_0}{1+\varepsilon_0}\norm{A_k}_F^2 +\norm{A-A_k}_F^2.
\label{eq: SFD5}
\end{align}
The last inequality follows from (Property 2). To bound $\Delta$, we consider
\begin{align}
		~& \norm{A}_F^2 - \norm{B}_F^2\nonumber\\
 		=~& \norm{A}_F^2 - \norm{SPAQ_k}_F^2 +\norm{SPAQ_k}_F^2 -\norm{B}_F^2\nonumber\\
		\leq~& \norm{A}_F^2 - \norm{SPAQ_k}_F^2 + \norm{SPAQ_k}_F^2-\norm{BQ_k}_F^2\nonumber\\
		\leq~& \norm{A}_F^2 -\norm{SPAQ_k}_F^2 + k\Delta,
\label{eq: Af1}
\end{align}
where the last inequality follows from (Property 2). On the other hand, recall that
$A = W_k\Lambda_kQ_k^T + W_{\rho-k}\Lambda_{\rho-k}Q_{\rho-k}^T$, then
\begin{align}
	&~\norm{A}_F^2 -\norm{B}_F^2\nonumber\\
 	=~& \norm{A}_F^2 - \norm{SPW_k\Lambda_kQ_k^T +SPW_{\rho-k}\Lambda_{\rho-k}Q_{\rho-k}^T}_F^2  \nonumber\\
 	& + \norm{SPA}_F^2 -\norm{B}_F^2 \nonumber\\
 	=~& \norm{A}_F^2 -\norm{SPAQ_k}_F^2 - \norm{SPW_{\rho-k}\Lambda_{\rho-k}}_F^2\nonumber\\
 	& + \norm{SPA}_F^2-\norm{B}_F^2\nonumber\\
 	\geq~&  \norm{A}_F^2 -\norm{SPAQ_k}_F^2 - \norm{SPW_{\rho-k}\Lambda_{\rho-k}}_F^2  + \ell\Delta\nonumber\\
 	\geq~&  \norm{A}_F^2 -\norm{SPAQ_k}_F^2 - \dfrac{1}{\delta_1}\norm{\Lambda_{\rho-k}}_F^2 + \ell\Delta \nonumber\\
 	=~&  \norm{A}_F^2 -\norm{SPAQ_k}_F^2 - \dfrac{1}{\delta_1}\norm{A-A_k}_F^2 + \ell\Delta
\label{eq: Af2}
\end{align}
The second equality follows from
\[
SPW_k\Lambda_kQ_k^TQ_{\rho-k}\Lambda_{\rho-k}W_{\rho-k}^T(SP)^T = 0
\] which implies
\begin{align*}
& \norm{SPW_k\Lambda_kQ_k^T +SPW_{\rho-k}\Lambda_{\rho-k}Q_{\rho-k}^T}_F^2\\
=~& \norm{SPW_k\Lambda_kQ_k^T}_F^2 +\norm{SPW_{\rho-k}\Lambda_{\rho-k}Q_{\rho-k}^T}_F^2
\end{align*}
again by Lemma 2.1 in \cite{Boutsidis2011}; and $
\norm{SPW_k\Lambda_kQ_k^T}_F^2 = \norm{SPAQ_kQ_k^T}_F^2 = \norm{SPAQ_k}_F^2$.
The first inequality is based on (Property 3). And the last inequality
\begin{equation}
\norm{SPW_{\rho-k}\Lambda_{\rho-k}}_F^2 \leq \dfrac{1}{\delta_1}\norm{\Lambda_{\rho-k}}_F^2
\label{eq: SpFDExp}
\end{equation}
holds with probability at least $1-\delta_1$ where $\delta_1\in (0, 1)$, followed from Lemma~\ref{lemma: SpFD2} by setting $\delta=\delta_1$ and $C=W_{\rho-k}\Lambda_{\rho-k}$.
Conditioned on (\ref{eq: SpFDExp}), we combine (\ref{eq: Af1}) and (\ref{eq: Af2}), then
\begin{equation}
\Delta\leq \dfrac{1}{(\ell-k)\delta_1}\norm{A-A_k}_F^2.
\label{eq: SFD7}
\end{equation}
Now we substitute (\ref{eq: SFD7}) to (\ref{eq: SFD5}) and apply union bound on (\ref{eq: SFD2}),  (\ref{eq: SFD4}) and (\ref{eq: SpFDExp}), thus we conclude that with probability at least $1-2\delta_0-\delta_1$,
\begin{align*}
& \norm{A-\Pi_{B,k}^F(A)}_F^2 \\
	\leq~& \dfrac{k}{(1+\varepsilon_0)(\ell-k)\delta_1}\norm{A-A_k}_F^2 +\dfrac{2\varepsilon_0}{1+\varepsilon_0}\norm{A_k}_F^2 \\
	& + \norm{A-A_k}_F^2\\
	\leq~& \left(1+\dfrac{k}{(\ell-k)\delta_1}\right)\norm{A-A_k}_F^2 + 2\varepsilon_0\norm{A_k}_F^2.
\end{align*}
By (\ref{eq: Fproof}),
\begin{align*}
	\norm{A-\tilde{A}_k}_F^2 =~& \norm{A-[AV]_kV^T}_F^2=~ \norm{A-\Pi_{B,k}^F(A)}_F^2\\
	 \leq~&  \left(1+\dfrac{k}{(\ell-k)\delta_1}\right)\norm{A-A_k}_F^2 + 2\varepsilon_0\norm{A_k}_F^2.
\end{align*}
Taking $\varepsilon_0 = \dfrac{1}{2}\varepsilon$, $\delta_0 = \delta_1 = \dfrac{1}{3}\delta$ and $\ell = \left\lceil k+ \dfrac{3k}{\varepsilon\delta}\right\rceil$, then
\begin{align*}
\norm{A-\tilde{A}_k}_F^2 =~& \norm{A-\Pi_{B,k}^F(A)}_F^2\\
				\leq~& (1+\varepsilon)\norm{A-A_k}_F^2 +\varepsilon\norm{A_k}_F^2.
\end{align*}
Hence by taking
\[
q\ell \geq \dfrac{12(k^2+k)}{\varepsilon^2\delta} = \dfrac{k^2+k}{(\varepsilon/2)^2\delta/3} \text{~ and ~~}
\ell = \left\lceil k+ \dfrac{3k}{\varepsilon\delta}\right\rceil,
\]
 we conclude that with probability at least $1-\delta$,
\begin{align*}
\norm{A-\tilde{A}_k}_F^2 =~& \norm{A-\Pi_{B,k}^F(A)}_F^2
				\leq~ \norm{A-A_k}_F^2 + \varepsilon\norm{A}_F^2.
\end{align*}
\end{proof}

Next, we analyze the computational complexity of Algorithm~\ref{alg: SpFD} for SpFD, comparing with Algorithm~\ref{alg: proto} for SpEmb and Algorithm~\ref{alg: fd} for FD. The analysis is in most detail as it will be used for explanation of the numerical results in Section~\ref{sec: 4} and \ref{sec: 5}. With the same sketch size $\ell$, the number of flops for each algorithm are summarized in TABLE~\ref{tab: flopcounts}.
\begin{table*}[!]
\centering
\footnotesize
\begin{tabular}{|l|c|c|c|}
\hline
		& SpEmb (Algorithm~\ref{alg: proto}) & FD (Algorithm~\ref{alg: fd}) & SpFD (Algorithm~\ref{alg: SpFD}) ($q>1$)\\
\hline
1. Form the sketch $SA$ or $B$ & $2\text{nnz}(A)$  & \multirow{3}{*}{$\left[6d(2\ell)^2+20(2\ell)^3\right](\dfrac{n}{\ell}-1)$} & \multirow{2}{*}{$2\text{nnz}(A)$}
\\
\cline{1-2}
2. Construct $V$  & \multirow{2}{*}{$4d\ell^2 - \dfrac{4}{3}\ell^3$} & & \multirow{2}{*}{$+\left[6d(2\ell)^2+20(2\ell)^3\right](q-1)$} \\
~~~~(through QR for SpEmb) & & &  \\
\hline
3.  Compute $[AV]_k$ (via SVD) & $2\text{nnz}(A)\ell +6n\ell^2+20\ell^3$ & $2\text{nnz}(A)\ell +6n\ell^2+20\ell^3$ & $2\text{nnz}(A)\ell +6n\ell^2+20\ell^3$ \\
 ~~~~and obtain $[AV]_kV^T$  & $+2n\ell k +2nd\ell $  & $+2n\ell k +2nd\ell $ & $+2n\ell k +2nd\ell $\\
\hline
\multirow{3}{*}{Total} & \multirow{2}{*}{$2nd\ell+2\text{nnz}(A)(\ell + 1) + 6n\ell^2$} & \multirow{2}{*}{$26nd\ell + 2\text{nnz}(A)\ell + 166n\ell^2$} & $2nd\ell+2\text{nnz}(A)(\ell+ 1) + 6n\ell^2$
\\
 & \multirow{2}{*}{$+2n\ell k + (4d+18\frac{2}{3}\ell)\ell^2$} &  \multirow{2}{*}{$+2n\ell k - (24d+140\ell)\ell^2$}  & $+2n\ell k + 24d(q-1)\ell^2$\\
 & & & $+ \left(160q-140\right)\ell^3$
 \\
 \hline
\end{tabular}
\caption{Flop counts with sketch size $\ell$}
\label{tab: flopcounts}
\end{table*}
Here note that QR factorization on a $d\times \ell$ $(d\geq \ell)$ matrix requires $4d\ell^2-\dfrac{4}{3}\ell^3$ flops; SVD on an $n\times \ell$ $(n\geq \ell)$ matrix requires $6n\ell^2 +20\ell^3$ flops and the flop counts follow from \cite{Golub2013}. For SpEmb, as the sketching matrix is generated on a probability distribution, we count the flops in worst case of constructing $SA$ \footnote{Note $SA$ is formed by combining the rows of $A$ directly and no explicit $S$ is created.}.
Then we can conclude:
\begin{itemize}
\item To achieve the best error bounds, SpFD requires $\ell$ to be linear on $k$ which is the same as for FD \cite{Liberty2013, Ghashami2015}, and lower compared to SpEmb \cite{Woodruff2014, Clarkson2013}.
\item Since $n$ is large, $n\geq d\geq \ell\geq k$, $n\gg \ell$ and $q\ell$ is only required to be $O(k^2)$ by Theorem~\ref{thm: main}, the number of iterations have been significantly reduced from $n/\ell$ to $q = q\ell/\ell$.
Thus SpFD runs much faster than FD for the same sketch size. Nevertheless,
when $q$ approaches $n/\ell$, $S\in\mathbb{R}^{q\ell\times n}$ becomes more square-like and contains many zero rows with a high probability,
so will $SPA$.  Hence performing FD on $SPA$ would still be much more efficient than on $A$. A detailed discussion on the number of zero rows in $S$ is provided in Section 2 of the supplemental material.
\end{itemize}

\section{Experimental Results}
\label{sec: 4}
In this section, we evaluate the performance of the proposed SpFD algorithm by comparing it with four other commonly used algorithms on both synthetic and real datasets. All algorithms are implemented in MATLAB R2016a, and all experiments are conducted on a PC Dell OptiPlex 7040.
The competing algorithms are:

1. Norm Sampling (NormSamp) \cite{Drineas2003, Holodnak2015, Drineas2006} : It is a sampling method. The sketch is composed of $\ell$ rows chosen i.i.d. from the $n$ rows of $A$ and scaled properly. Each row in the sketch takes $A_{(i)}/\sqrt{\ell p_i}$ with probability $p_i = \norm{A_{(i)}}_2^2/\norm{A}_F^2$. Norm sampling requires $O(nd)$ time to form the probability distribution $p_i$'s and construct the sketch \cite{Drineas2006}.

2. Discrete Cosine Transform (DCT) \cite{Avron2011, Urano2013}: It is a random projection method, a type of fast Fourier transform which has more practical advantages compared to SRHT \cite{Avron2011, Yang2016}. The sketch is formed as $\sqrt{n/\ell}RFDA\in\mathbb{R}^{\ell\times d}$ with
\begin{itemize}
\item $R\in\mathbb{R}^{\ell\times n}$ is a subset of $\ell$ rows from $I_n$, where the rows are chosen uniformly and without replacement;
\item $F\in\mathbb{R}^{n\times n}$ is a normalized discrete cosine transform matrix;
\item $D\in\mathbb{R}^{n\times n}$ is a random diagonal matrix whose entries are independently chosen to be $\pm1$ with equal probability.
\end{itemize}

3. SpEmb: Follow Algorithm~\ref{alg: proto}.

4. FD: Follow Algorithm~\ref{alg: fd}.

5. SpFD: Follow Algorithm~\ref{alg: SpFD}.
Here we employ three different $q$ for comparison, refer to Fig.~\ref{fig: intuition} for an easy understanding (in later discussions, the following three variations together are referred to as SpFDq):
\begin{itemize}
\item $q = 5$ (SpFD5): For $\alpha = 1, \hdots, 5$, each $S_\alpha\in\mathbb{R}^{\ell\times n/q}$ combines $n/5$ rows in $PA$ to $\ell$ rows in each iteration for SVD computation. In other words, $S\in\mathbb{R}^{q\ell\times n}$ consists of $5$ diagonal blocks, sketching $PA\in\mathbb{R}^{n\times d}$ into $5$ $\ell\times d$ blocks, then perform FD.
\item $q = 10$ (SpFD10): $S\in\mathbb{R}^{q\ell\times n}$ consists of $10$ diagonal blocks, sketching $PA\in\mathbb{R}^{n\times d}$ into $10$ $\ell\times d$ blocks, then perform FD.
\item $q = 50$ (SpFD50): $S\in\mathbb{R}^{q\ell\times n}$ consists of $50$ diagonal blocks, sketching $PA\in\mathbb{R}^{n\times d}$ into $50$ $\ell\times d$ blocks, then perform FD.
\end{itemize}
For randomized algorithms, after forming the sketch matrix $B$, we follow the same procedure as SpEmb, use QR factorization to construct $V$ and SVD to construct $[AV]_k$, and then obtain $\tilde{A}_k = [AV]_kV^T$.

The randomized algorithms with dense embedding matrices used in practice use sketch sizes of $\ell=k+p$, where
$p$ is a small nonnegative integer \cite{Halko2011}. It has also been demonstrated in \cite{Urano2013} that
$\ell=4k$ is empirically sufficient for sparse embeddings. These results have been taken into considerations while setting the sketch sizes in our numerical experiments below.

\subsection*{Synthetic Data} The construction of synthetic datasets follows from \cite{Liberty2013, Ghashami2015} which were used to test FD. Each row of the generated input matrix $A\in\mathbb{R}^{n\times d}$ consists of $k$ dimensional signal and $d$ dimensional noise ($k \ll d$). To put it in an accurate form, $A = S^cDU + N/\zeta$. $S^c\in\mathbb{R}^{n\times k}$ is the signal coefficient matrix and $S^c_{ij}\sim \mathcal{N}(0,1)$ i.i.d. for $i\in[n]$ and $j\in[k]$. $D\in\mathbb{R}^{k\times k}$ is a diagonal matrix with $D_{ii} = 1-(i-1)/m$ which gives linearly diminishing signal singular values. $U\in\mathbb{R}^{k\times d}$ is the signal row space matrix which contains a random $k$ dimensional subspace in $\mathbb{R}^d$ and $UU^T = I_k$. The matrix $S^cDU$ is of exactly rank-$k$ and constitutes the signal we wish to recover. $N\in\mathbb{R}^{n\times d}$ contributes additive Gaussian noise $N_{ij} \sim\mathcal{N}(0,1)$. According to \cite{Vershynin2011} and as mentioned in \cite{Liberty2013, Ghashami2015}, to ensure the signal is recoverable,
 it is required that $c_1\leq \zeta\leq c_2\sqrt{d/k}$ for constants $c_1$ and $c_2$ close to $1$. In the experiments, we consider $n=10000$ and $d = 1000$, $k\in\{10, 20, 50\}$ ($k=10$ as default value) and $\zeta \in \{5, 10, 15, 20\}$ ($\zeta=10$ as default value). We set the sketch size $\ell \in\{k:10: 200\}$. For each dataset setting, we generate $3$ matrices and for each of these matrices, we run each method $5$ times
and the results are taken as the median of all $15$ repetitions for each dataset setting.
\begin{table*}[!t]
\vspace{10pt}
\centering
\begin{tabular}{lccccc}
\hline
 & No. of            &       Data       &  nnz  &      Approximating & Sketch  \\
& data points ($n$)&dimension ($d$)&   \%  &    rank ($k$) & size ($\ell$)\\
\hline
w8a \cite{Platt1999} & 64700 & 300 & 3.88& 20& 20:10:150 \\
\hline
Birds \cite{Wah2011}& 11788 & 312 & 10.09 & 50 & 50:10:150\\
\hline
Protein \cite{Wang2002} &24387& 357 & 28.20 & 50 & 50:10:170 \\
\hline
MNIST-all \cite{LeCun1998}& 70000 & 784 & 19.14 & 100& 100:10:200 \\
\hline
amazon7-small \cite{Blondel2013, Dredze2008} & 262144 & 1500 &0.017 & 100& 100:20:300 \\
\hline
rcv1-small \cite{Lewis2004} & 47236 & 3000 & 0.14& 100& 100:20:300 \\
\hline
\end{tabular}
\begin{remark*}
Dataset \textnormal{amazon7-small} is obtained from \textnormal{amazon7} which contains $n = 1362109$ data (number of rows) and $d = 262144$ features (number of columns). We pick the first $1500$ rows from the original matrix and use the transpose.
Dataset \textnormal{rcv1-small} is obtained from \textnormal{rcv1-multiclass} which contains $n = 534135$ data (number of rows) and $d = 47236$ features (number of columns). We pick the first $3000$ rows from the original matrix and use the transpose.
\end{remark*}
\caption{Data statistics.}
\label{tab: data}
\end{table*}
\subsection*{Real Data} We consider the six real world datasets in TABLE~\ref{tab: data}.
 For each dataset, we run $10$ times for each method and the results presented are the median of the $10$ repetitions.
 
\subsection*{Results} To measure the accuracy, we set the best rank-$k$ approximation $A_k$ obtained through SVD of $A$ as our benchmark. For each method, we examine the following three measures:
\begin{itemize}
\item $F$-norm Error: \quad $\norm{A-\tilde{A}_k}_F/\norm{A-A_k}_F$;
\item $2$-norm Error: \quad $\norm{A-\tilde{A}_k}_2/\norm{A-A_k}_2$;
\item Running time to construct $\tilde{A}_k$ in seconds.
\end{itemize}

For synthetic datasets of different signal dimension $k$ with $k=10, 20, 50$ while $\zeta = 10$, and different noise ratio $\zeta$ with $\zeta = 5, 15, 20$ while $k=10$, the results are displayed in Fig.~\ref{fig: setSynthetic} (a) - (f). The results for real datasets are displayed in Fig.~\ref{fig: setSynthetic} (g) - (h) and Fig.~\ref{fig: setReal} (i) - (l).

\begin{figure*}[!]
\subfigure{\makebox[15pt][r]{\makebox[20pt]{\raisebox{60pt}{\rotatebox[origin=c]{90}{$F$-norm Error}}}}
\includegraphics[width=4.2cm,height=3.5cm]{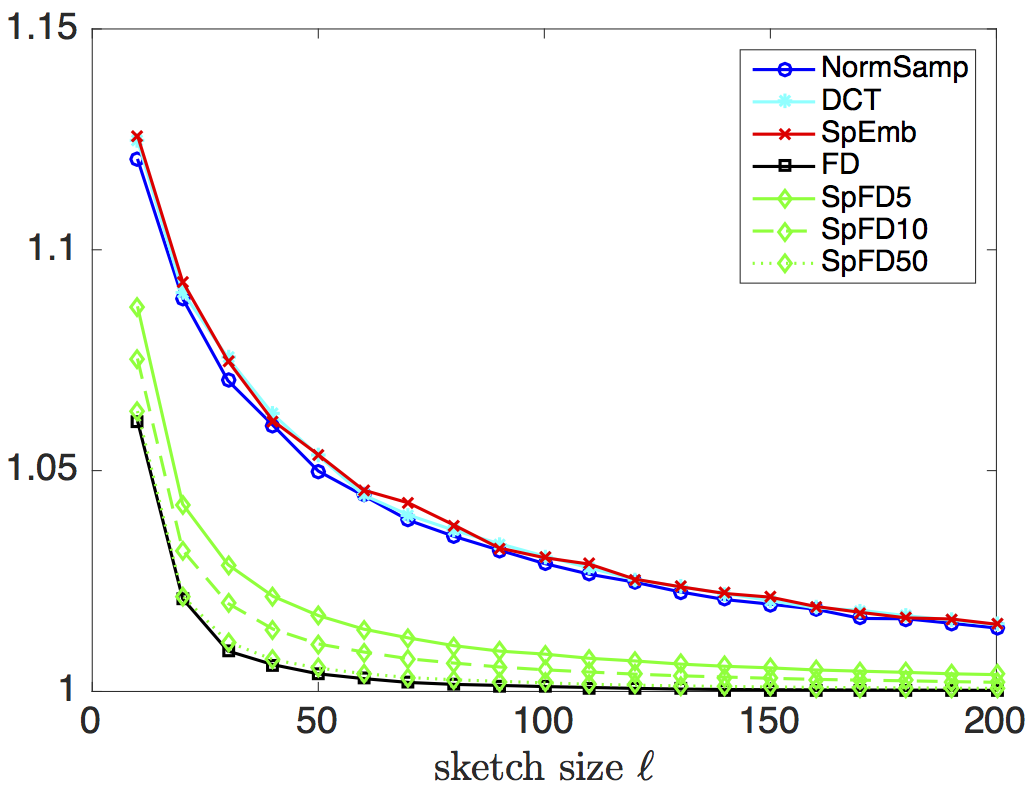}} \hspace{1pt}
\subfigure{\includegraphics[width=4.15cm,height=3.5cm]{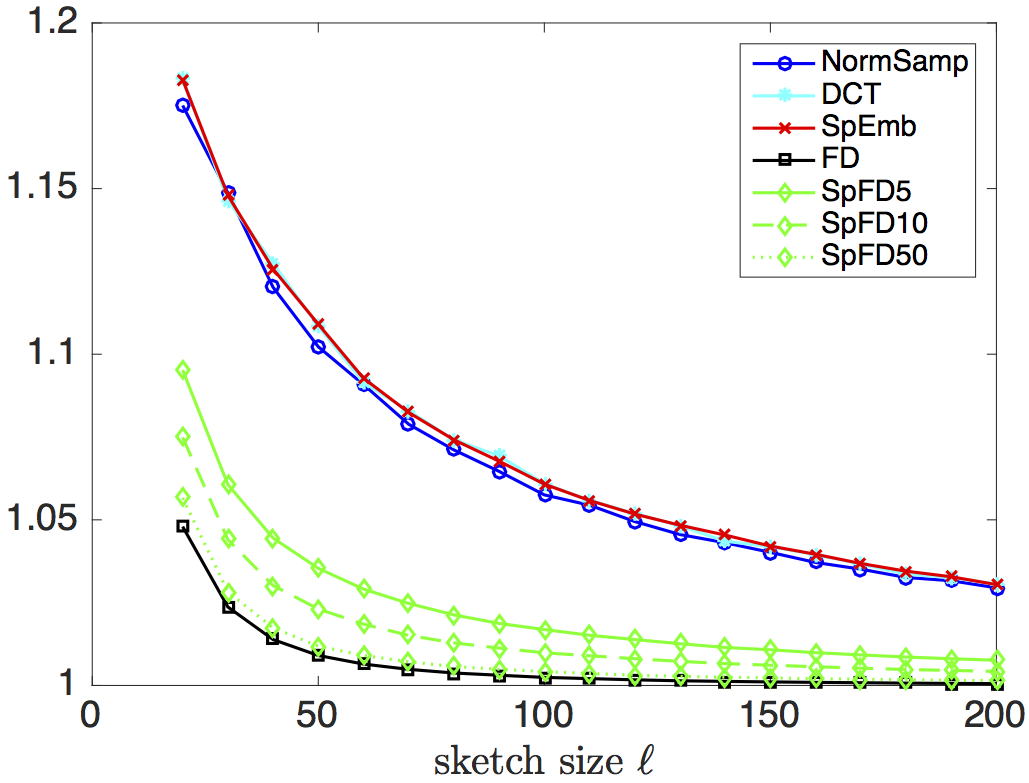}} \hspace{3pt}
\subfigure{\includegraphics[width=4.15cm,height=3.5cm]{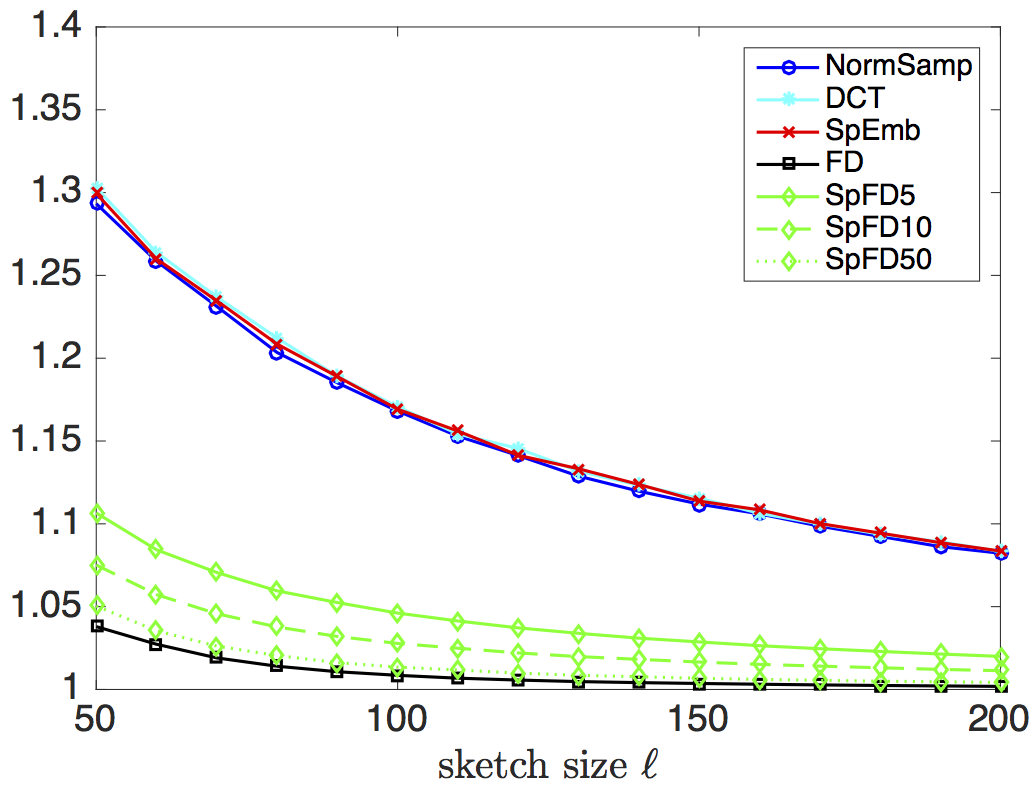}}\hspace{3pt}
\subfigure{\includegraphics[width=4.1cm,height=3.5cm]{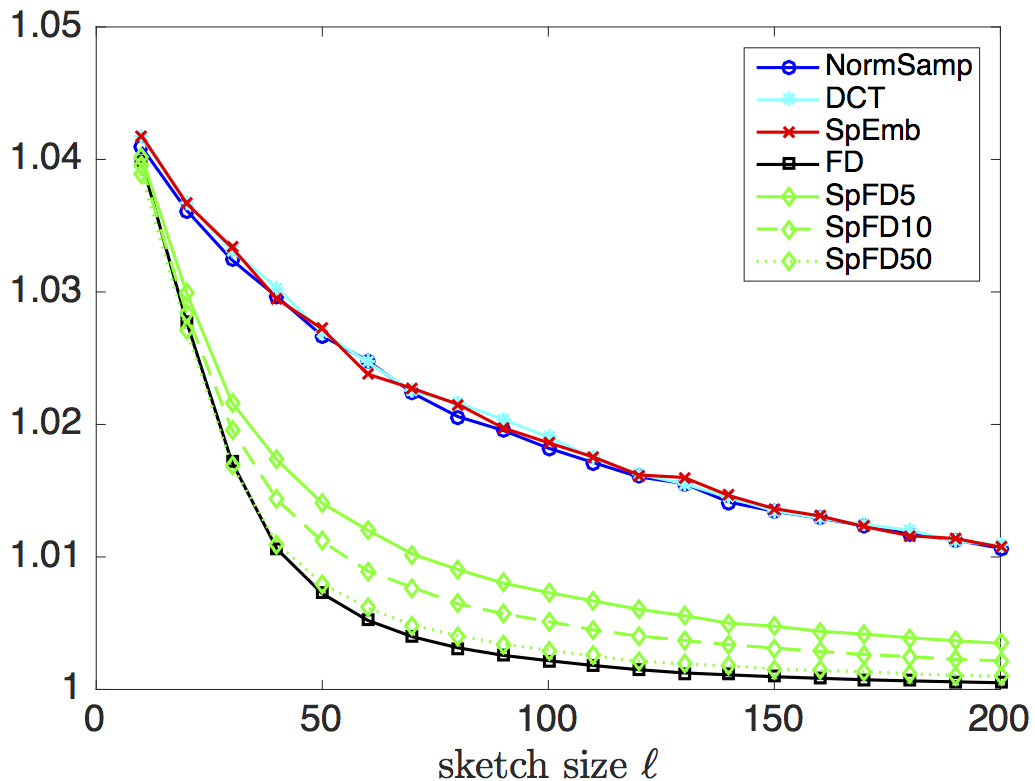}} \\
\vspace{-10pt}

\hspace{5pt}\subfigure{\makebox[15pt][r]{\makebox[20pt]{\raisebox{60pt}{\rotatebox[origin=c]{90}{$2$-norm Error}}}}
\includegraphics[width=4.0cm,height=3.5cm]{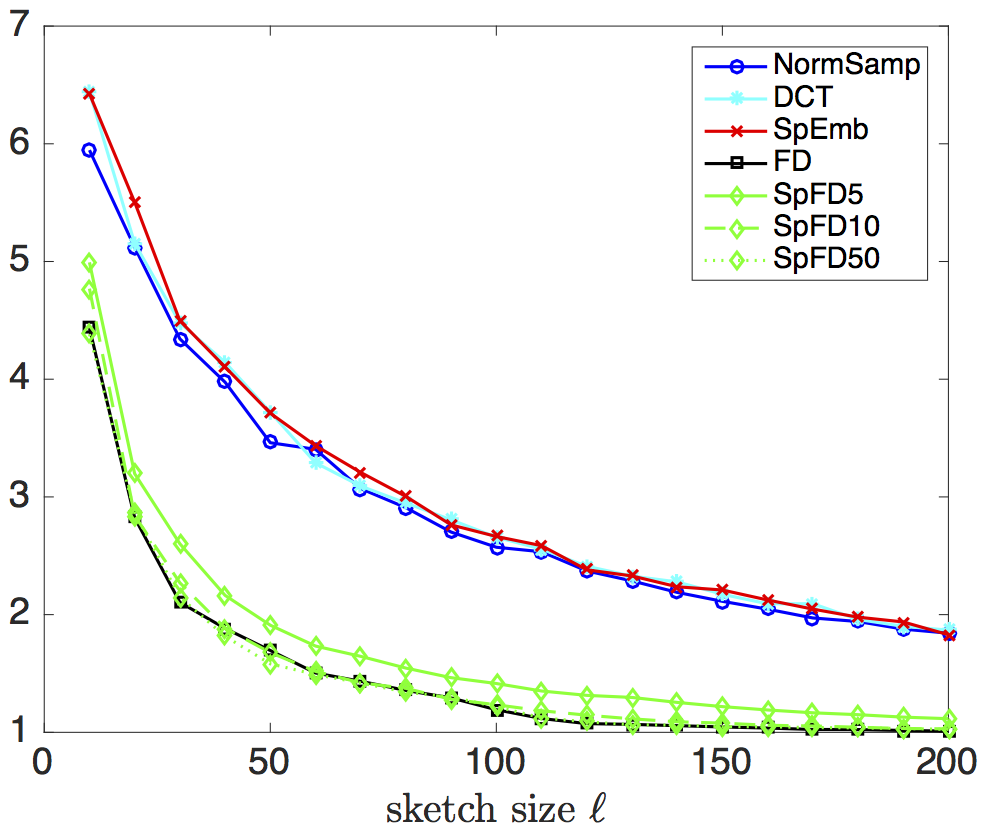}} \hspace{5pt}
\subfigure{\includegraphics[width=4.1cm,height=3.5cm]{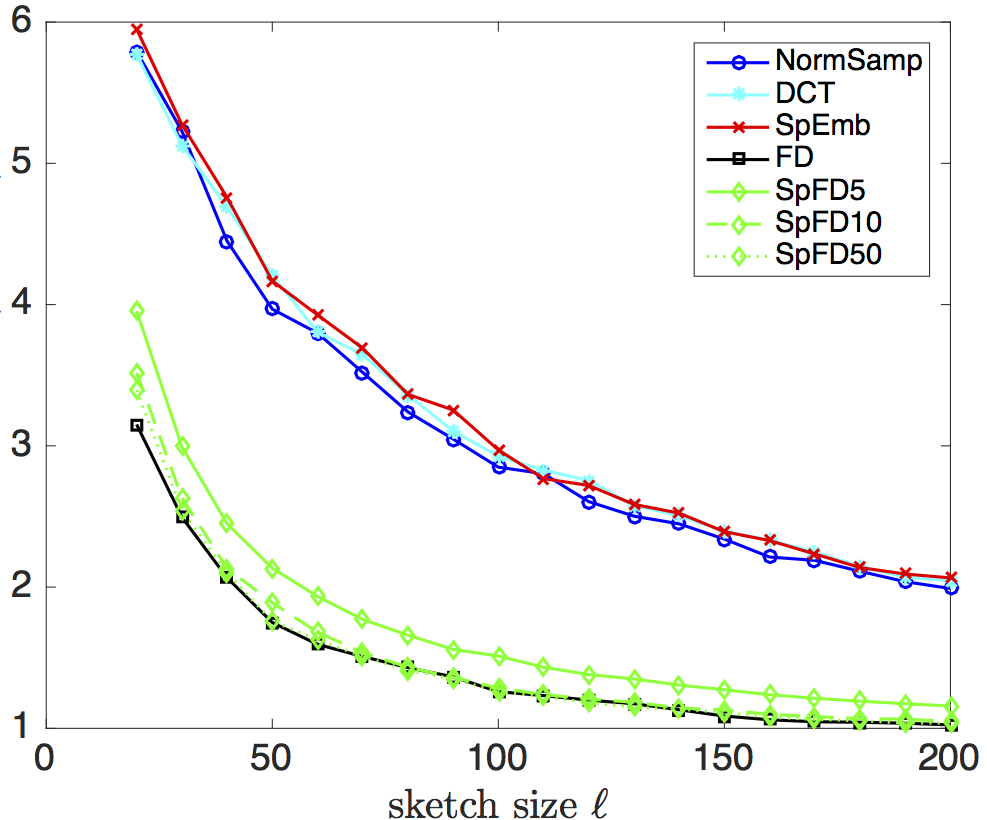}} \hspace{5pt}
\subfigure{\includegraphics[width=4.1cm,height=3.5cm]{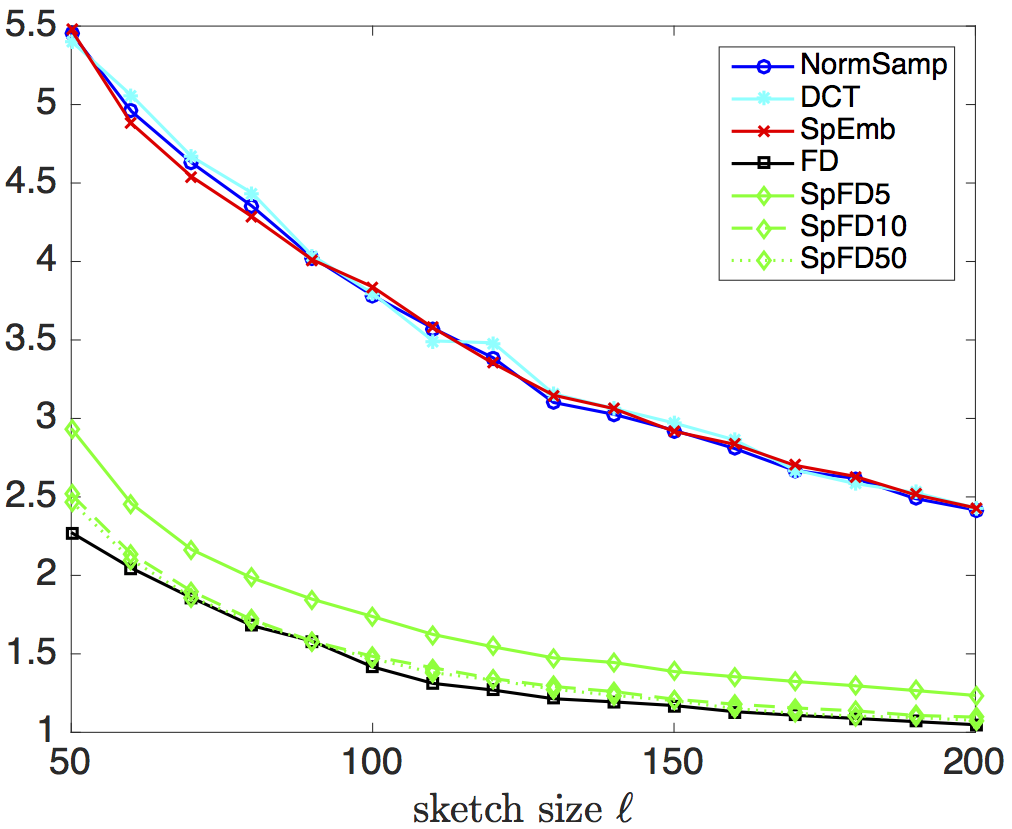}}\hspace{5pt}
\subfigure{\includegraphics[width=4.1cm,height=3.5cm]{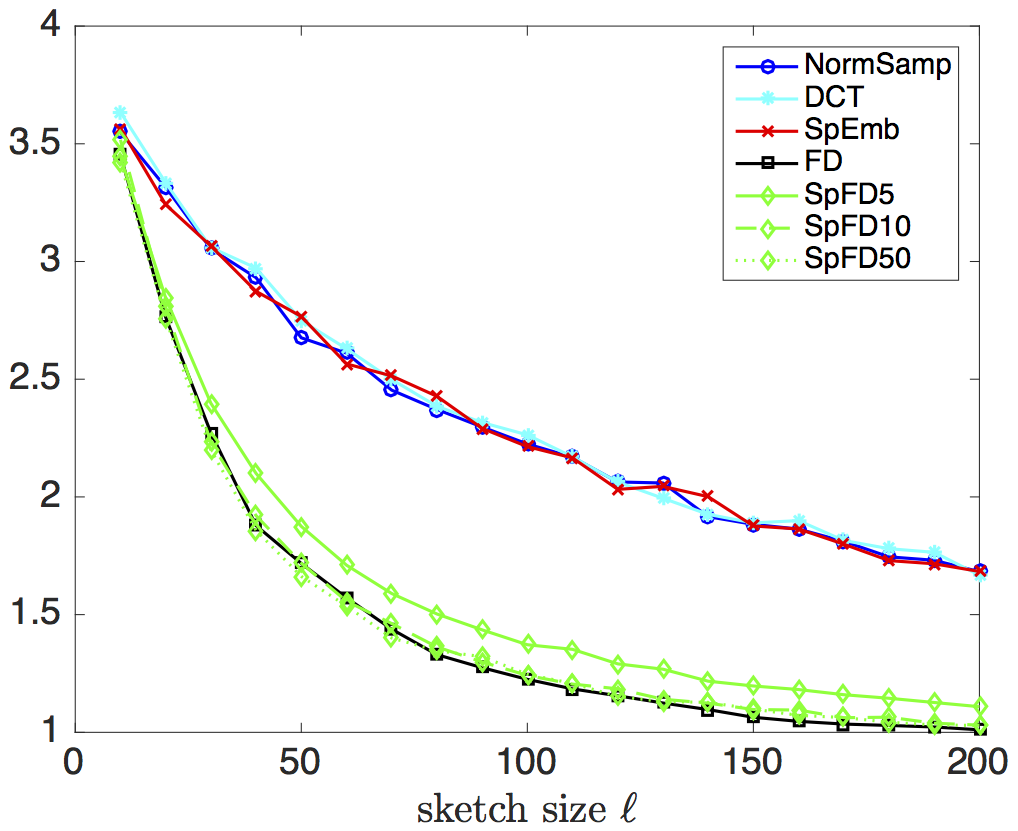}} \\
\vspace{-15pt}

\setcounter{subfigure}{0}
\subfigure[][$k=10, \zeta = 10$]{\makebox[15pt][r]{\makebox[20pt]{\raisebox{60pt}{\rotatebox[origin=c]{90}{Running Time (s)}}}}
\includegraphics[width=4.1cm,height=3.5cm]{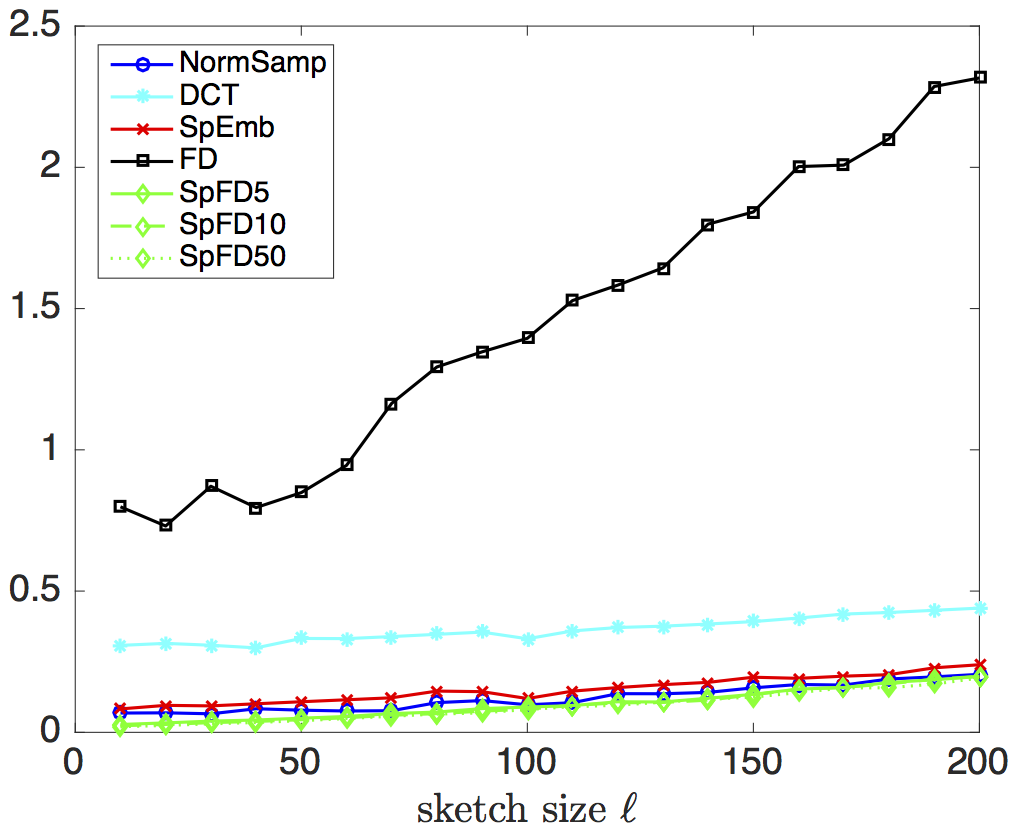}} \hspace{5pt}
\subfigure[][$k=20, \zeta = 10$]{\includegraphics[width=4.1cm,height=3.5cm]{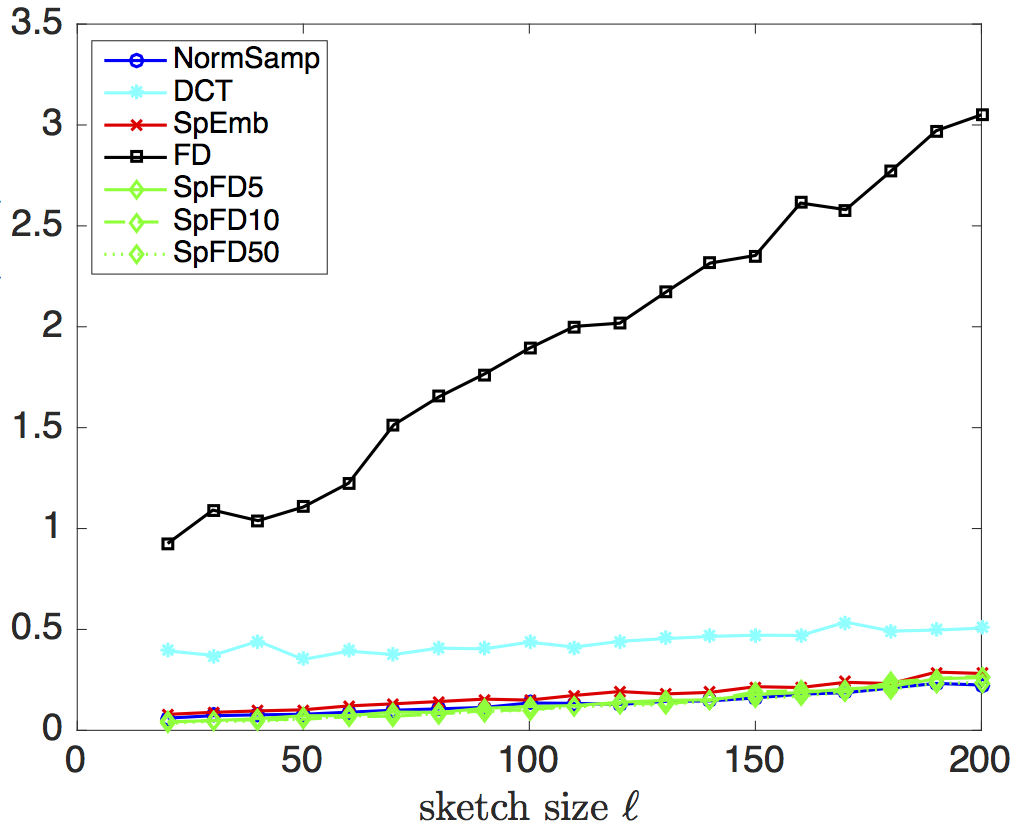}} \hspace{5pt}
\subfigure[][$k=50, \zeta = 10$]{\includegraphics[width=4.1cm,height=3.5cm]{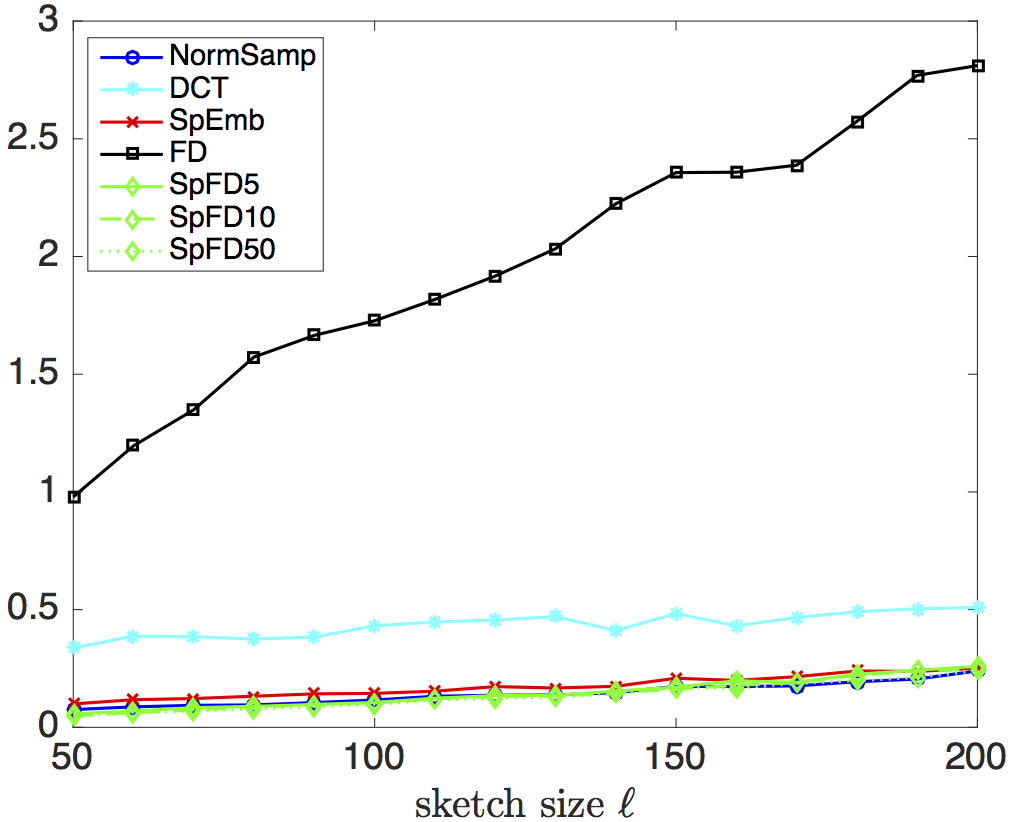}} \hspace{5pt}
\subfigure[][$k=10, \zeta = 5$]{\includegraphics[width=4.1cm,height=3.5cm]{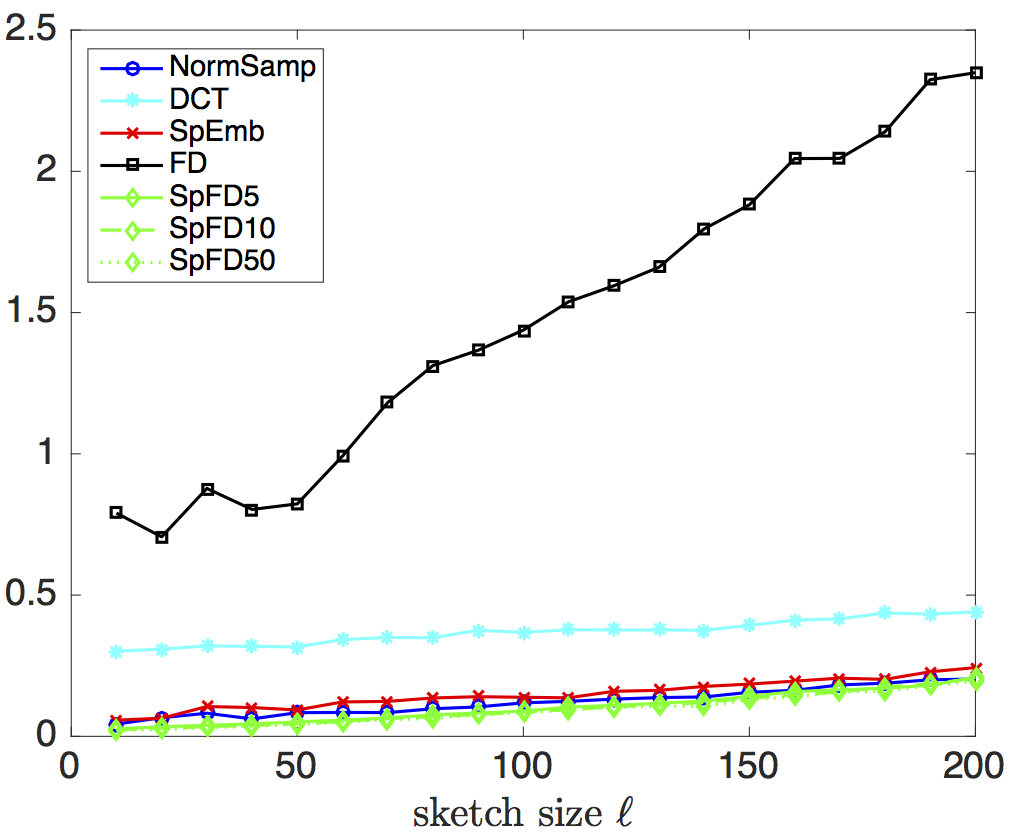}} \\

\subfigure{\makebox[15pt][r]{\makebox[20pt]{\raisebox{60pt}{\rotatebox[origin=c]{90}{$F$-norm Error}}}}%
\includegraphics[width=4.3cm,height=3.5cm]{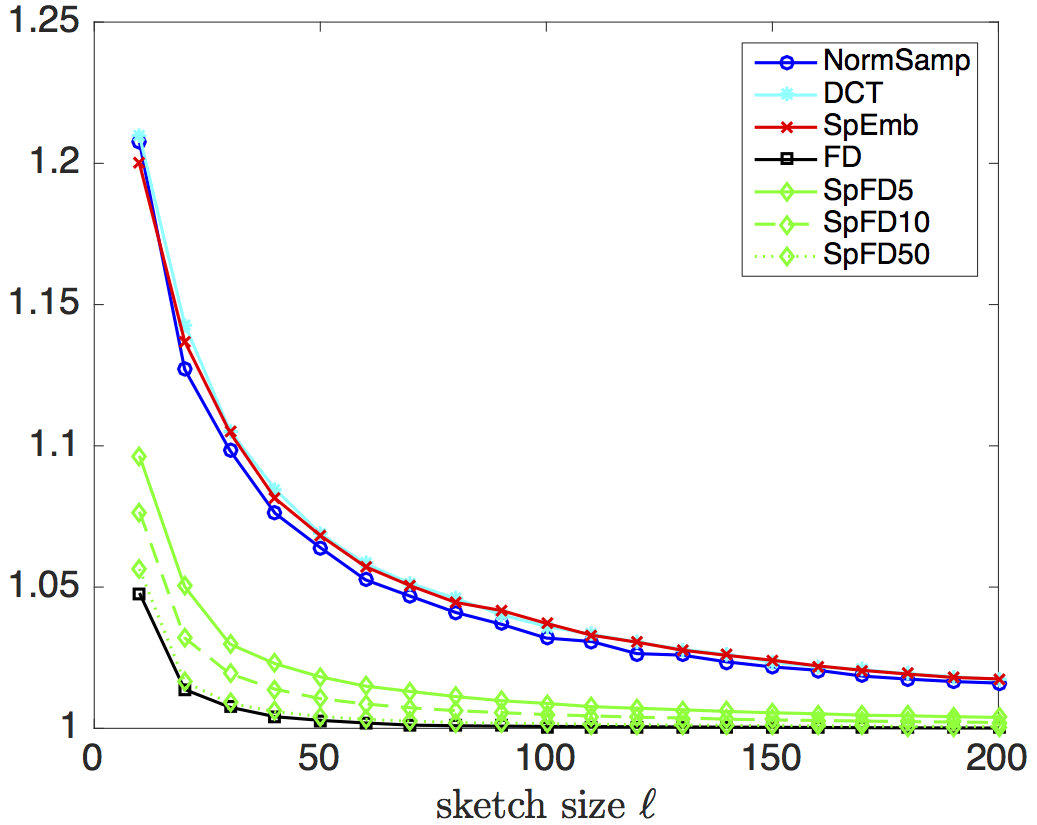}} \hspace{2pt}
\subfigure{\includegraphics[width=4.2cm,height=3.5cm]{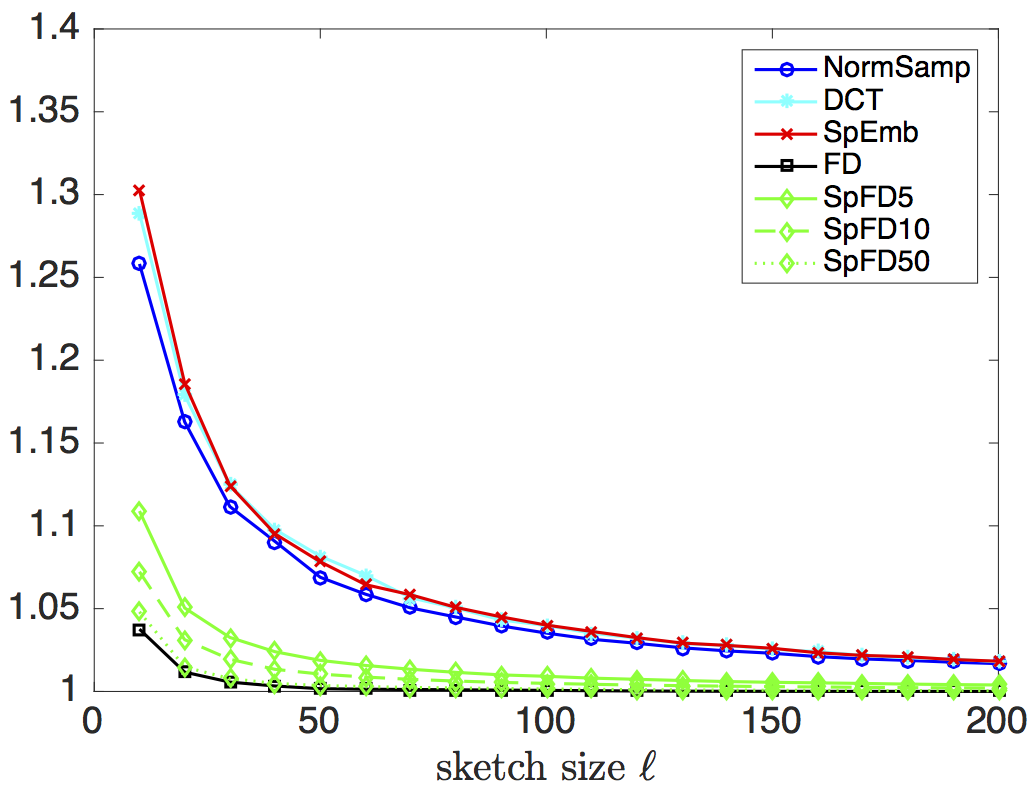}} \hspace{4pt}
\subfigure{\includegraphics[width=4.2cm,height=3.5cm]{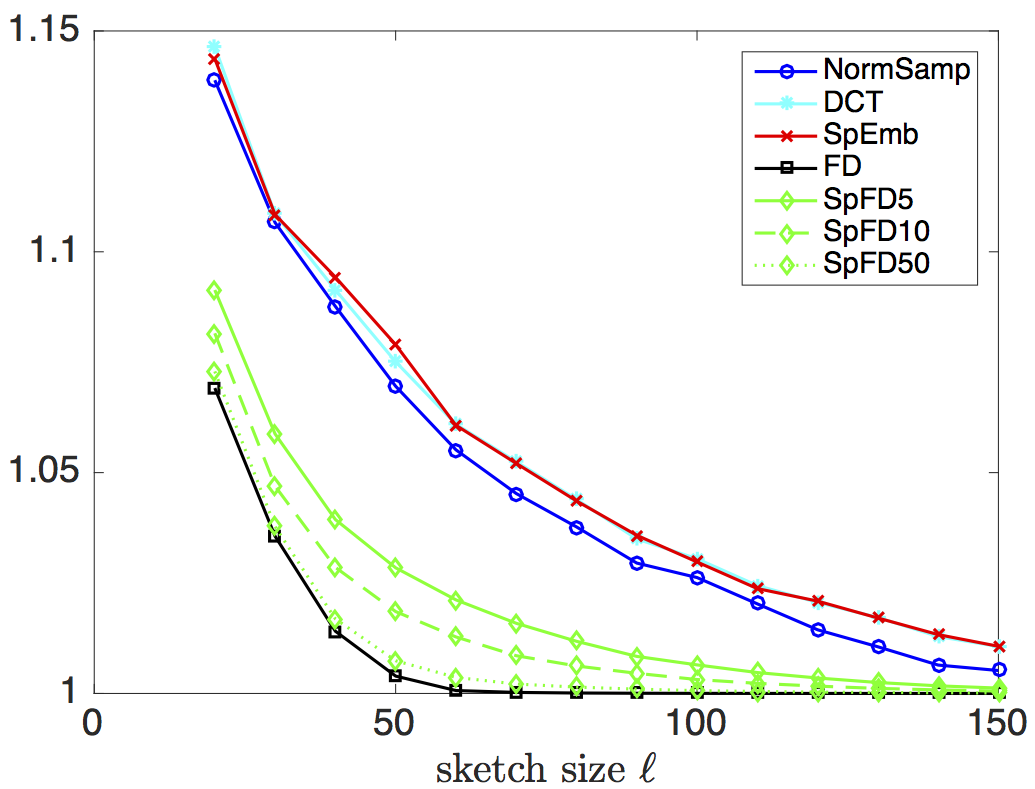}} \hspace{4pt}
\subfigure{\includegraphics[width=4.2cm,height=3.5cm]{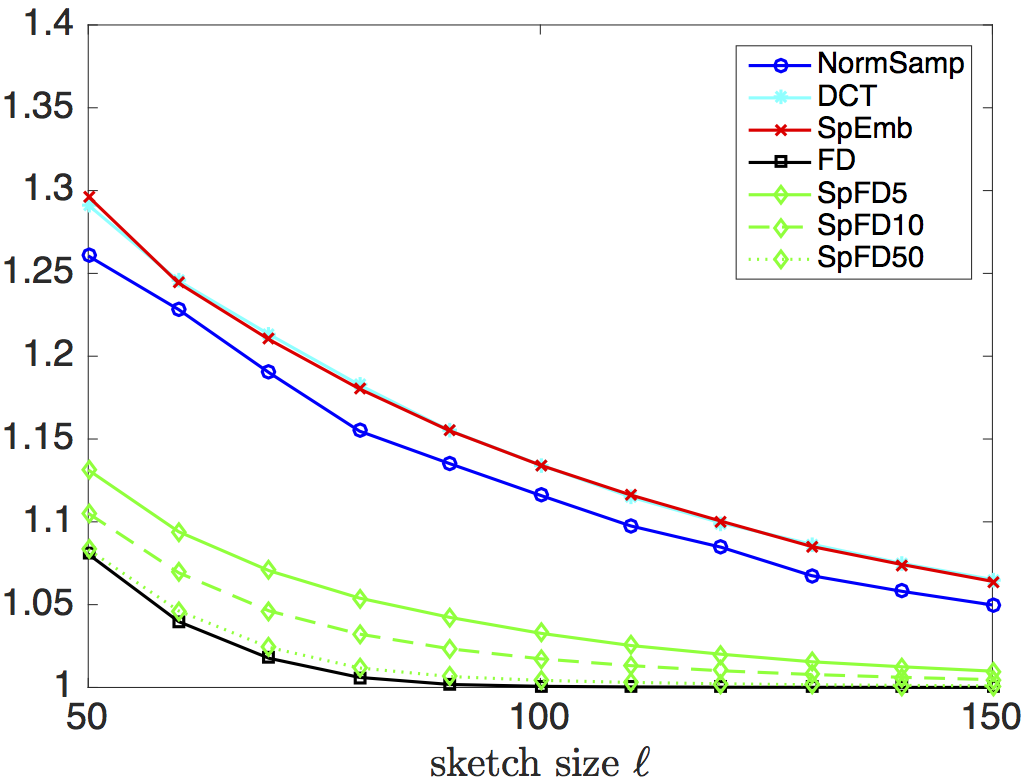}}\\
\vspace{-10pt}

\hspace{5pt}\subfigure{\makebox[15pt][r]{\makebox[20pt]{\raisebox{60pt}{\rotatebox[origin=c]{90}{$2$-norm Error}}}}%
\includegraphics[width=4.2cm,height=3.5cm]{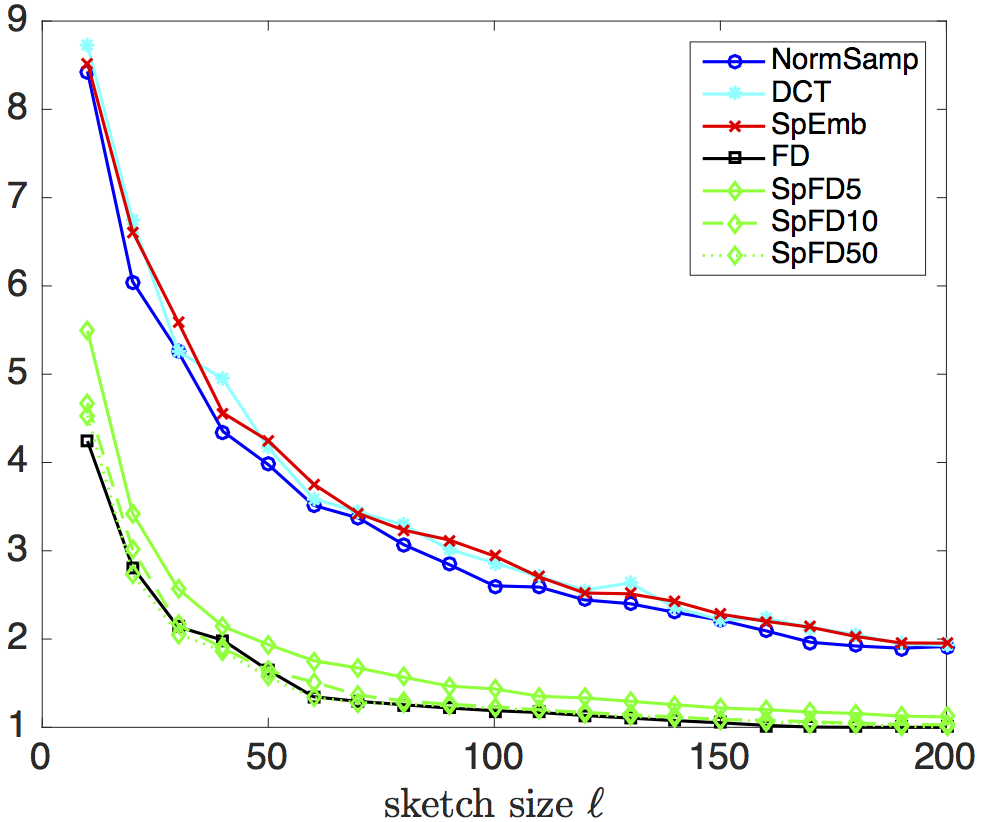}} \hspace{3pt}
\subfigure{\includegraphics[width=4.1cm,height=3.5cm]{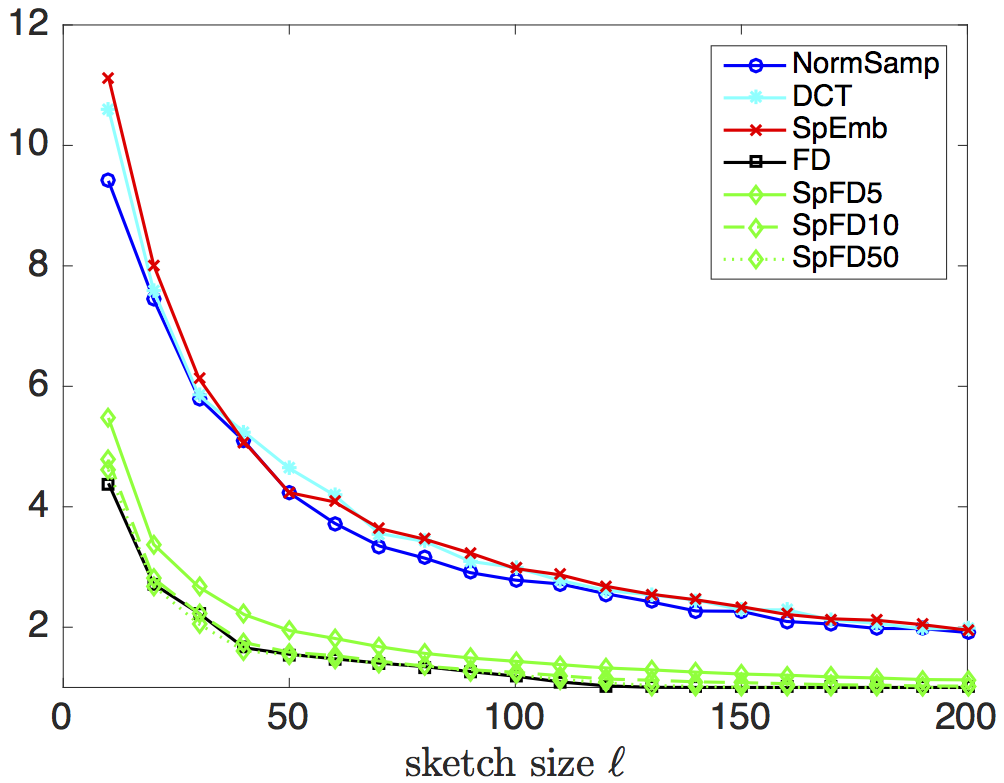}} \hspace{4pt}
\subfigure{\includegraphics[width=4.1cm,height=3.5cm]{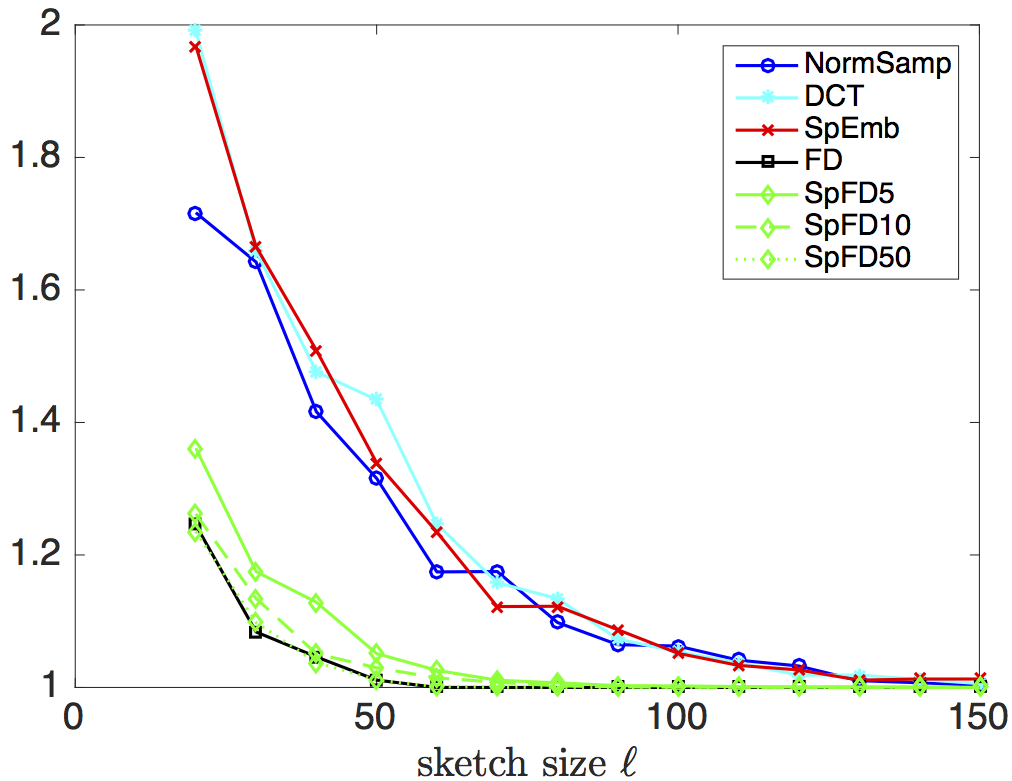}} \hspace{4pt}
\subfigure{\includegraphics[width=4.1cm,height=3.5cm]{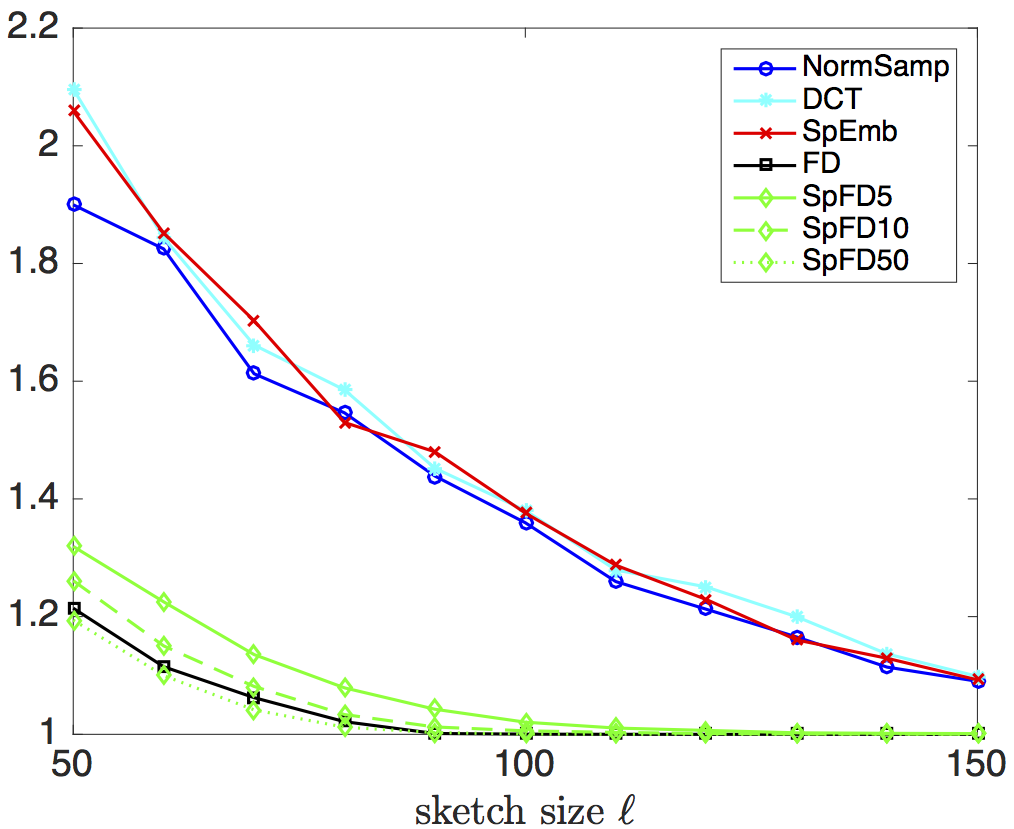}}\\
\vspace{-18pt}

\setcounter{subfigure}{4}
\subfigure[][$k=10, \zeta = 15$]{\makebox[15pt][r]{\makebox[20pt]{\raisebox{60pt}{\rotatebox[origin=c]{90}{Running Time (s)}}}}
\includegraphics[width=4.2cm,height=3.5cm]{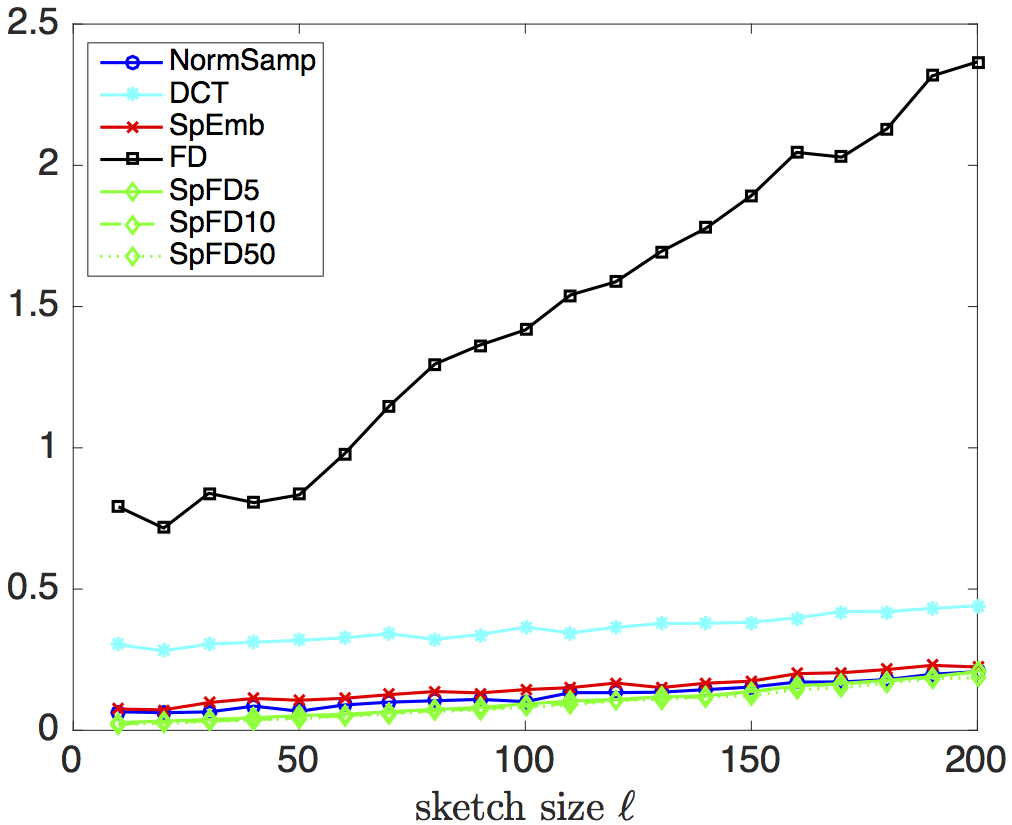}} \hspace{3pt}
\subfigure[][$k=10, \zeta = 20$]{\includegraphics[width=4.2cm,height=3.5cm]{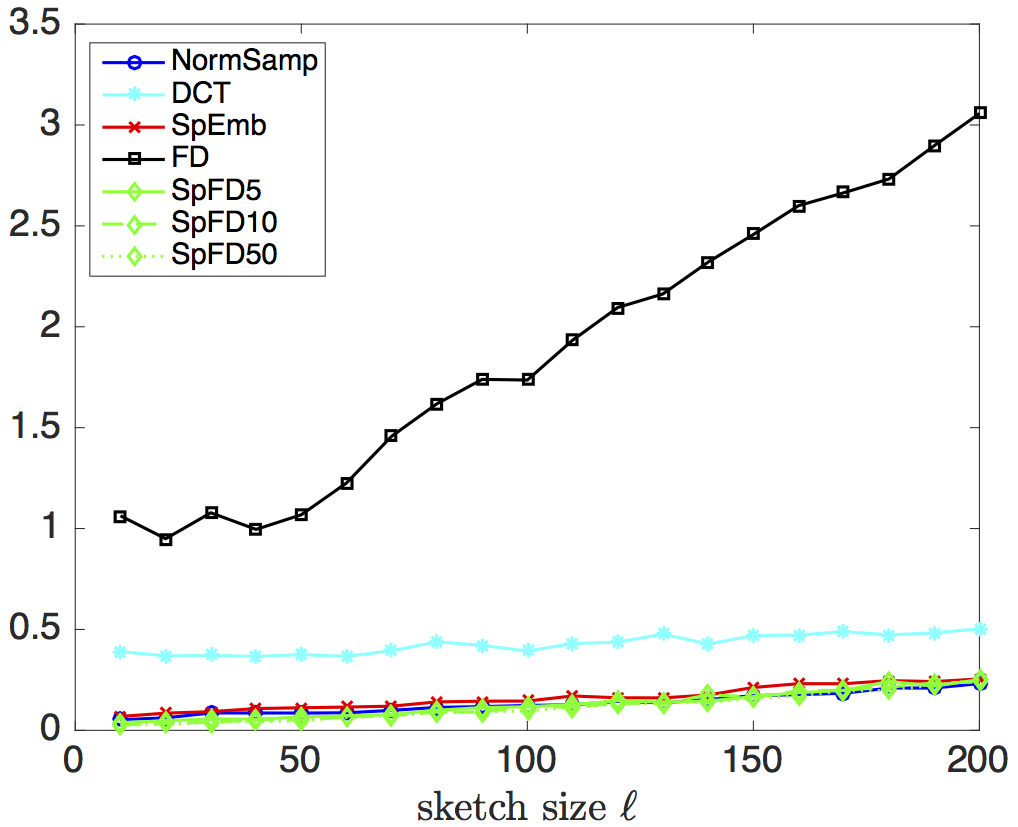}} \hspace{4pt}
\subfigure[][w8a]{\includegraphics[width=4.2cm,height=3.5cm]{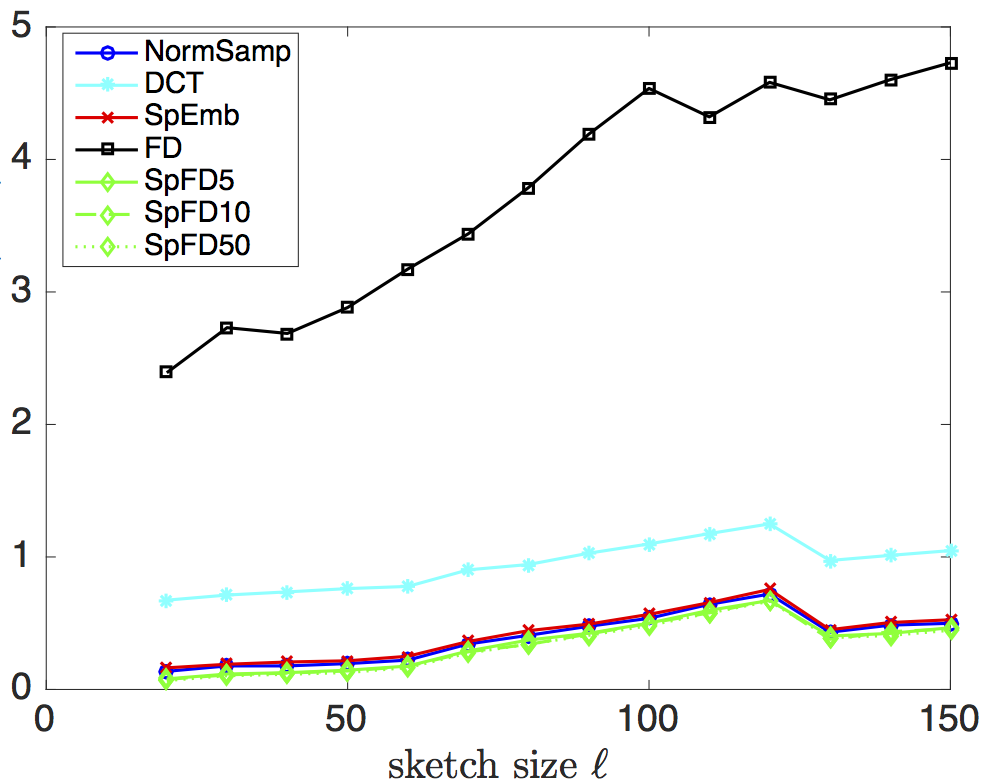}} \hspace{4pt}
\subfigure[][Birds]{\includegraphics[width=4.1cm,height=3.5cm]{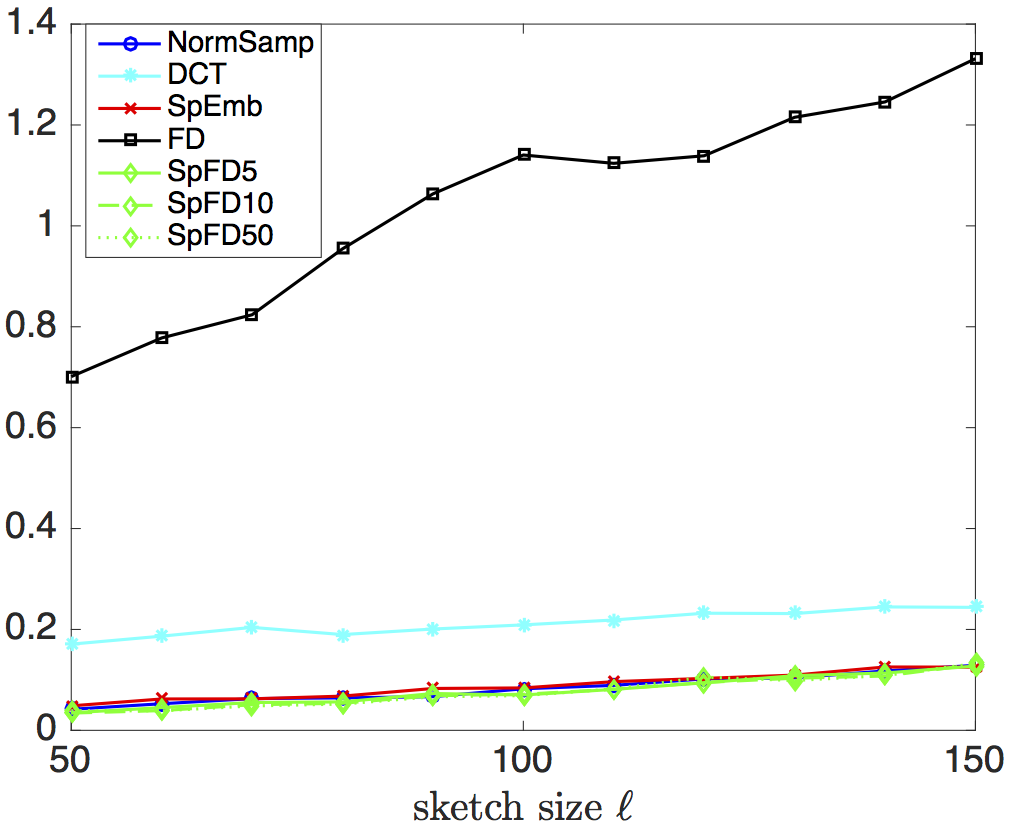}} \\

\caption{Results on synthetic datasets and real world datasets: w8a and Birds. }
\label{fig: setSynthetic}
\vspace{-10pt}
\end{figure*}

\begin{figure*}[!]
\subfigure{\makebox[15pt][r]{\makebox[20pt]{\raisebox{60pt}{\rotatebox[origin=c]{90}{$F$-norm Error}}}}
\includegraphics[width=4.2cm,height=3.5cm]{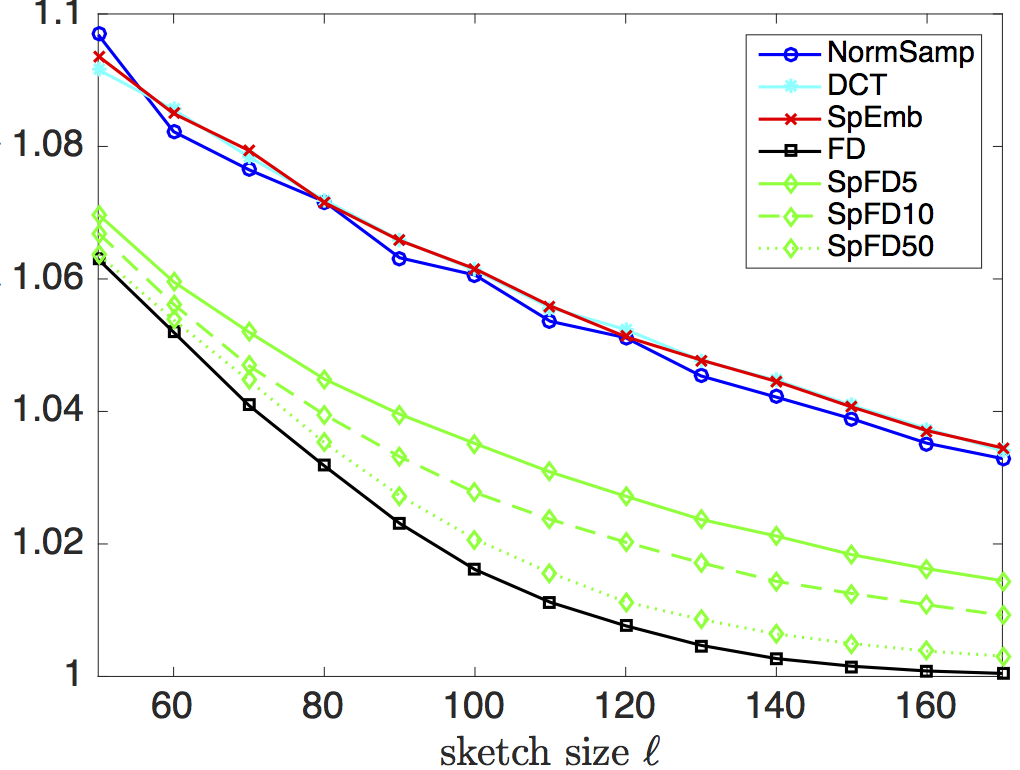}} \hspace{1pt}
\subfigure{\includegraphics[width=4.2cm,height=3.5cm]{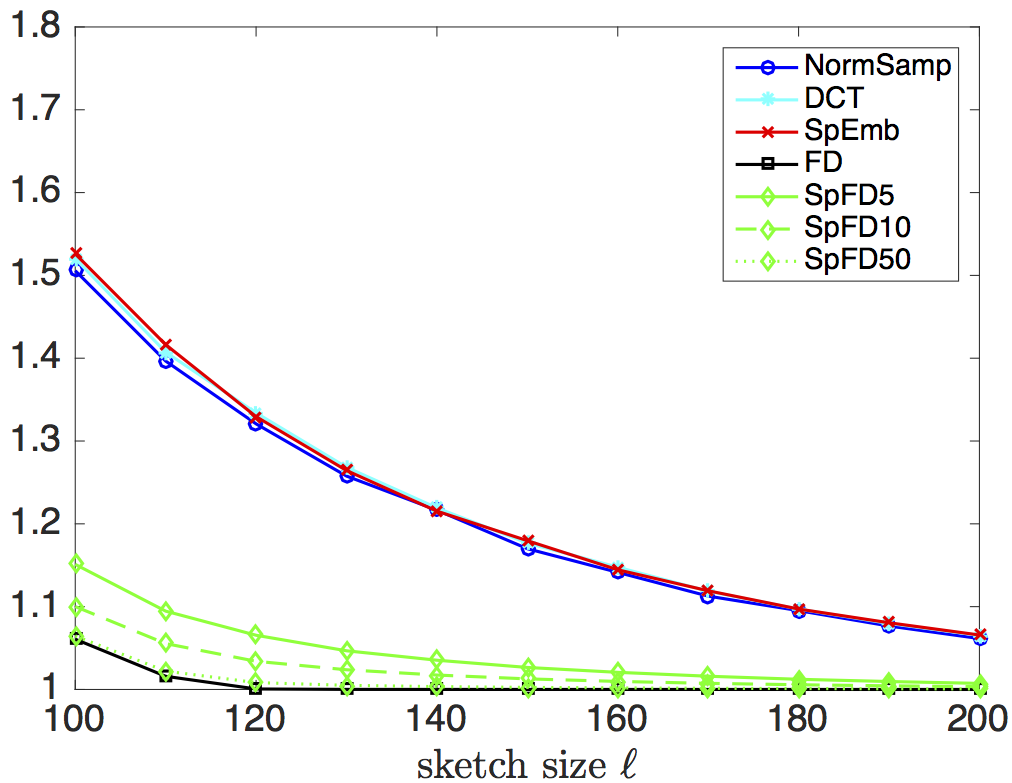}} \hspace{3pt}
\subfigure{\includegraphics[width=4.2cm,height=3.5cm]{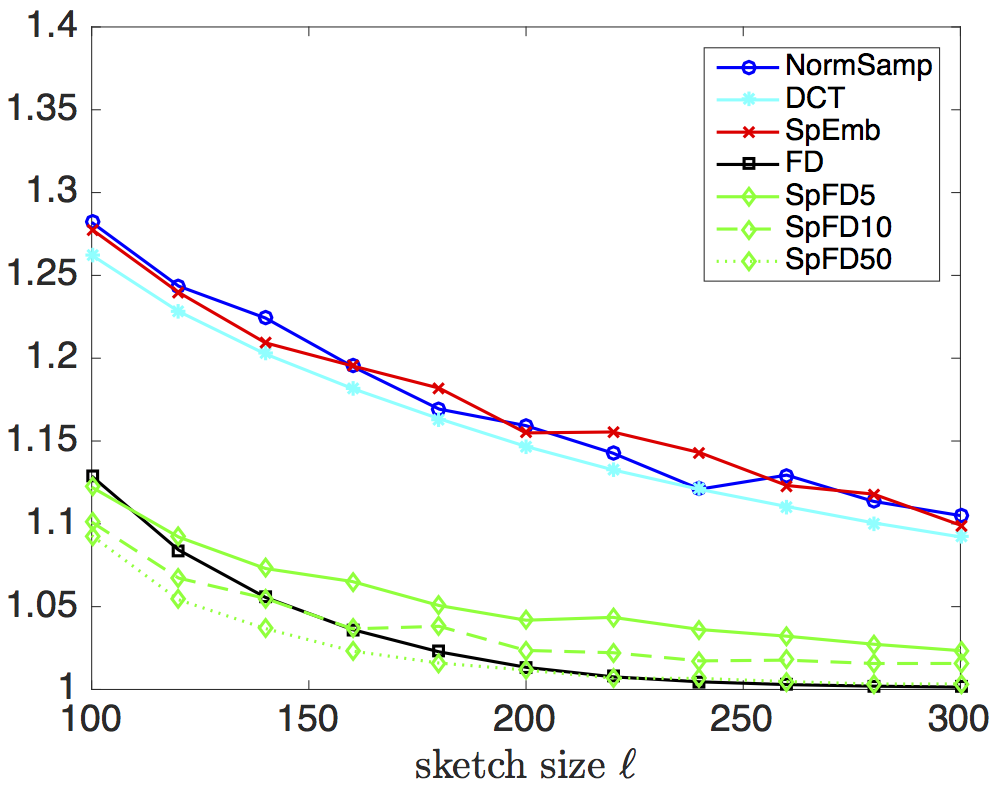}}\hspace{3pt}
\subfigure{\includegraphics[width=4.2cm,height=3.5cm]{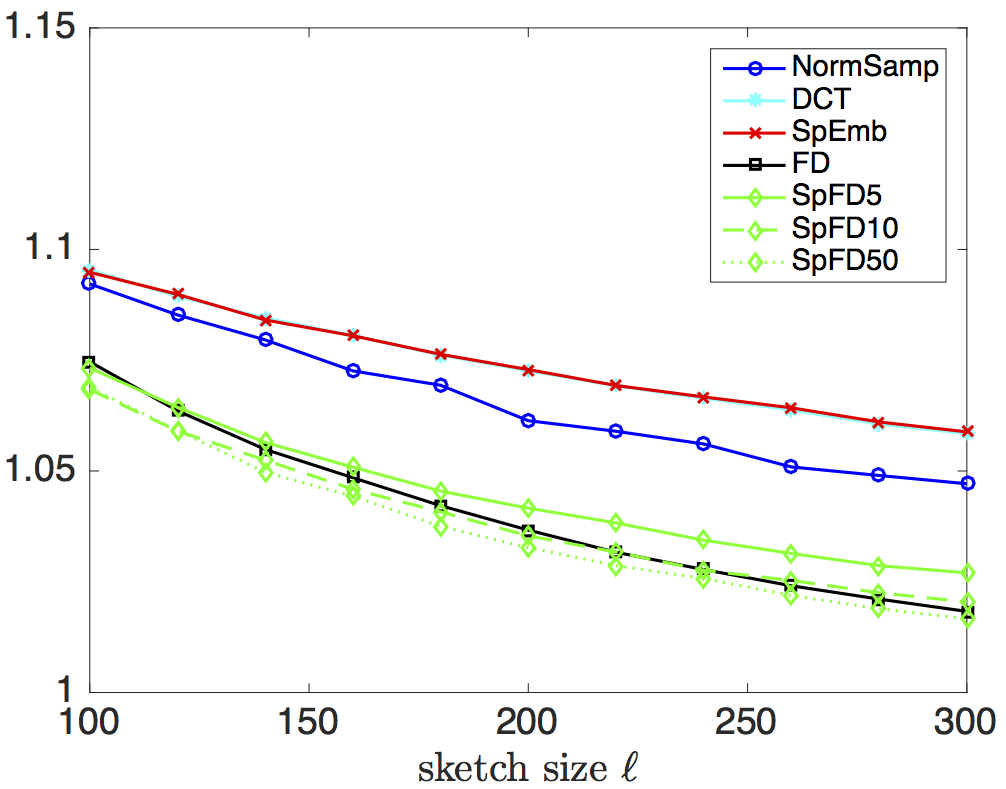}} \\
\vspace{-10pt}

\hspace{2pt}\subfigure{\makebox[15pt][r]{\makebox[20pt]{\raisebox{60pt}{\rotatebox[origin=c]{90}{$2$-norm Error}}}}
\includegraphics[width=4.15cm,height=3.5cm]{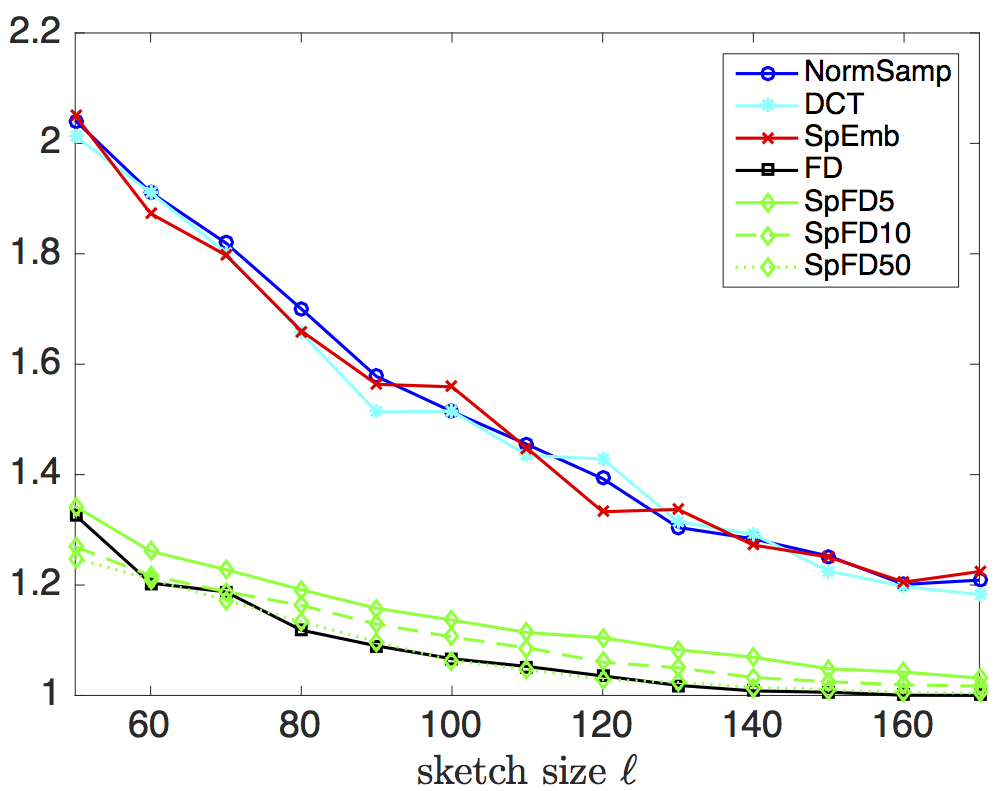}} \hspace{5pt}
\subfigure{\includegraphics[width=4.1cm,height=3.4cm]{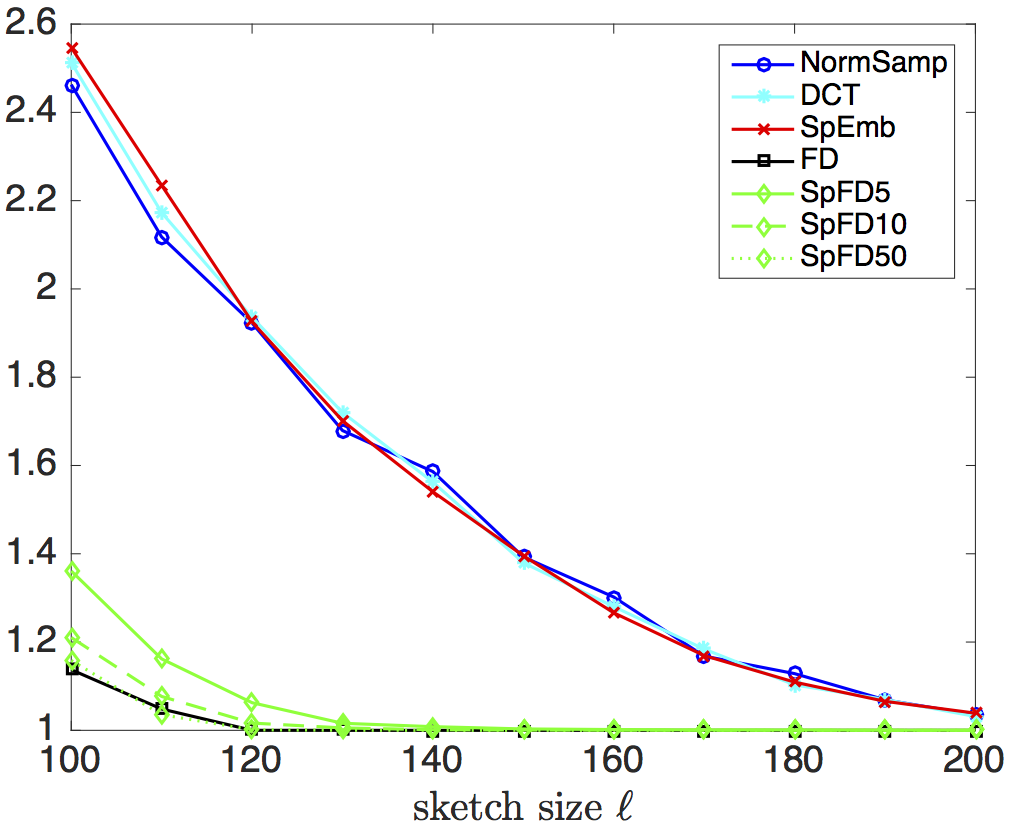}} \hspace{5pt}
\subfigure{\includegraphics[width=4.1cm,height=3.4cm]{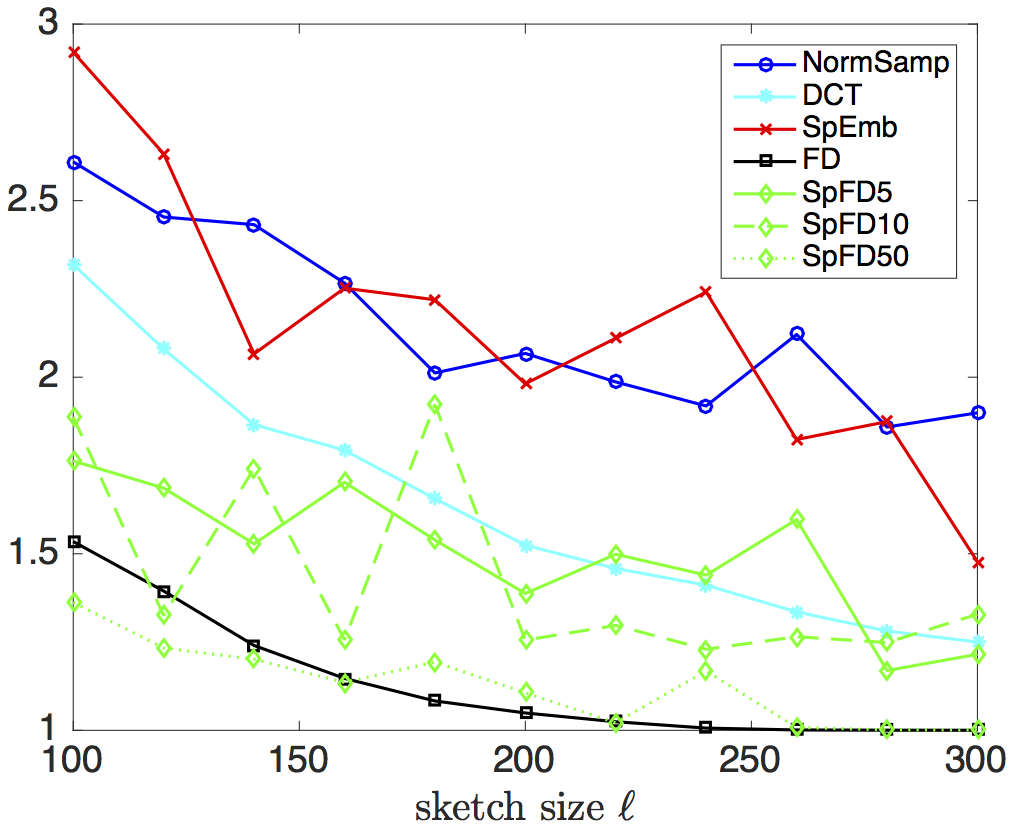}}\hspace{5pt}
\subfigure{\includegraphics[width=4.1cm,height=3.5cm]{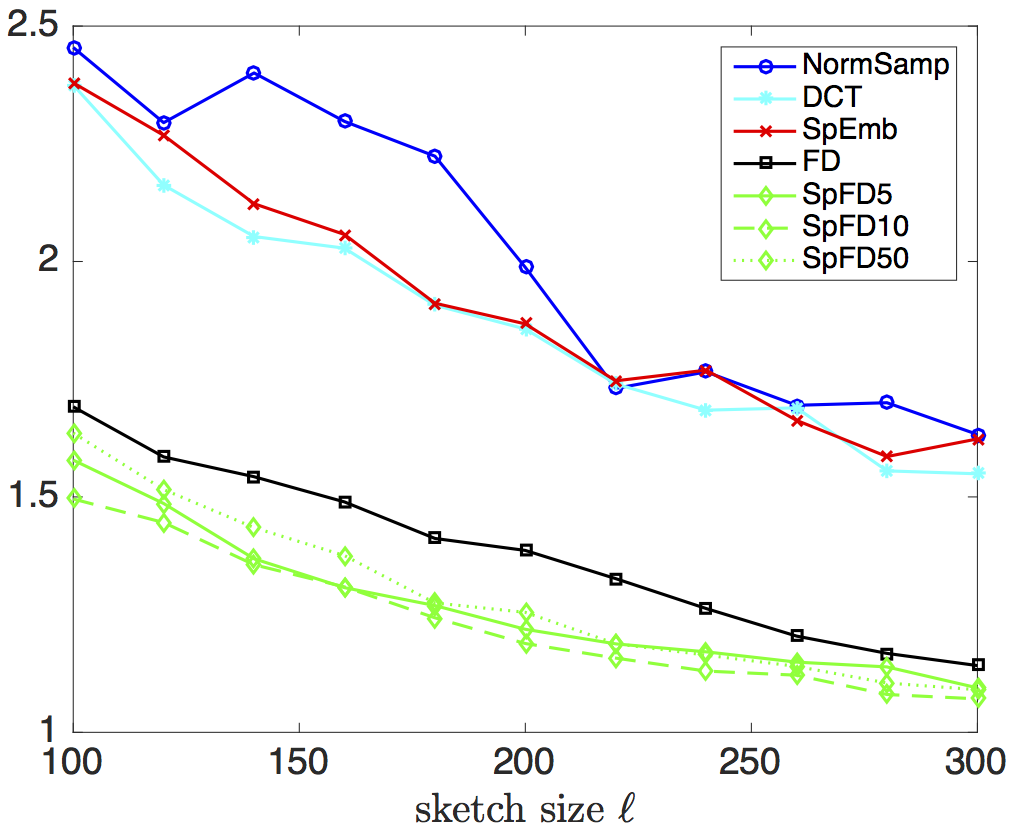}} \\
\vspace{-15pt}

\setcounter{subfigure}{8}
\subfigure[][Protein]{\makebox[15pt][r]{\makebox[20pt]{\raisebox{60pt}{\rotatebox[origin=c]{90}{Running Time (s)}}}}
\includegraphics[width=4.15cm,height=3.5cm]{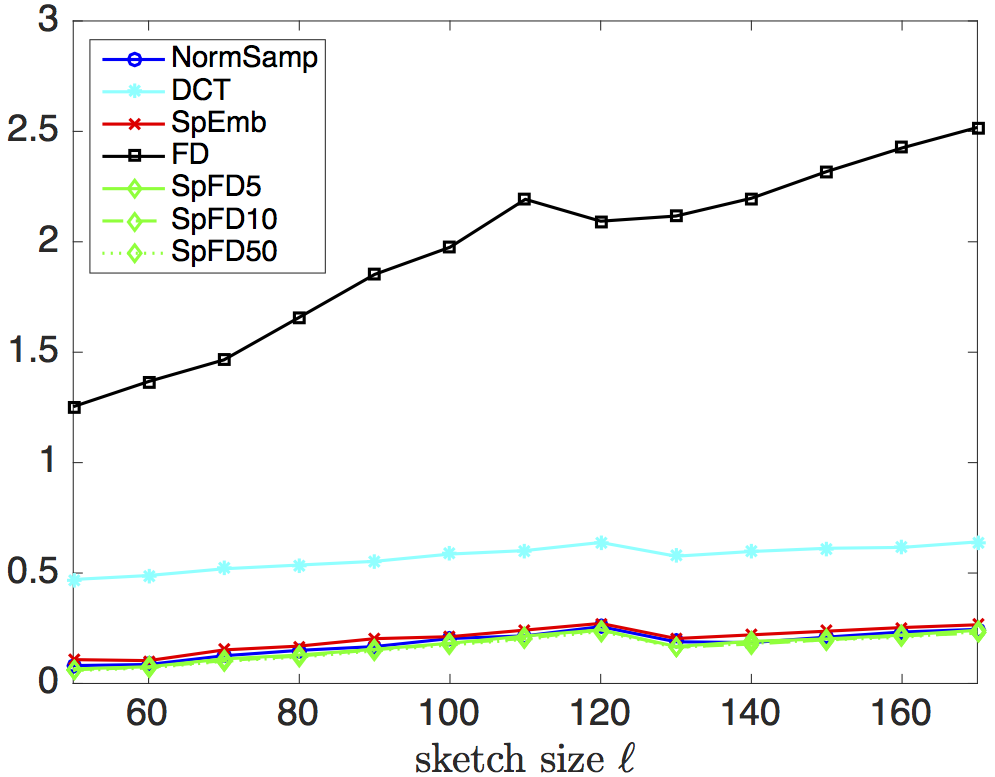}} \hspace{5pt}
\subfigure[][MNIST-all]{\includegraphics[width=4.1cm,height=3.5cm]{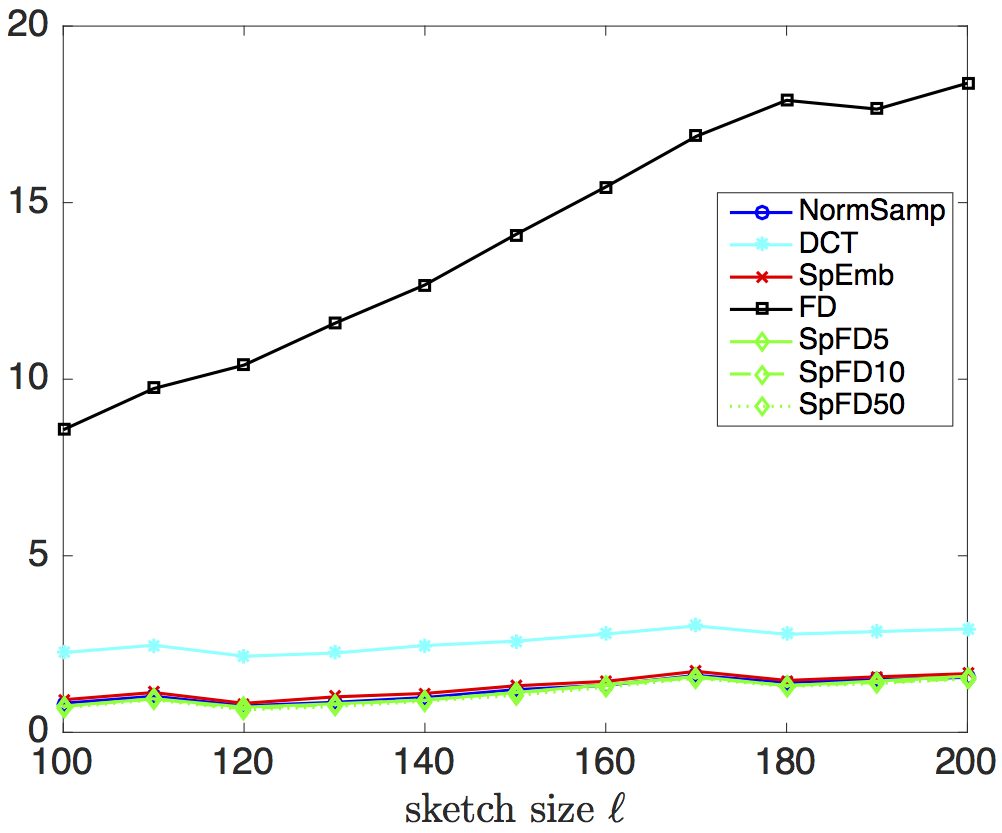}} \hspace{5pt}
\subfigure[][amazon7-small]{\includegraphics[width=4.1cm,height=3.5cm]{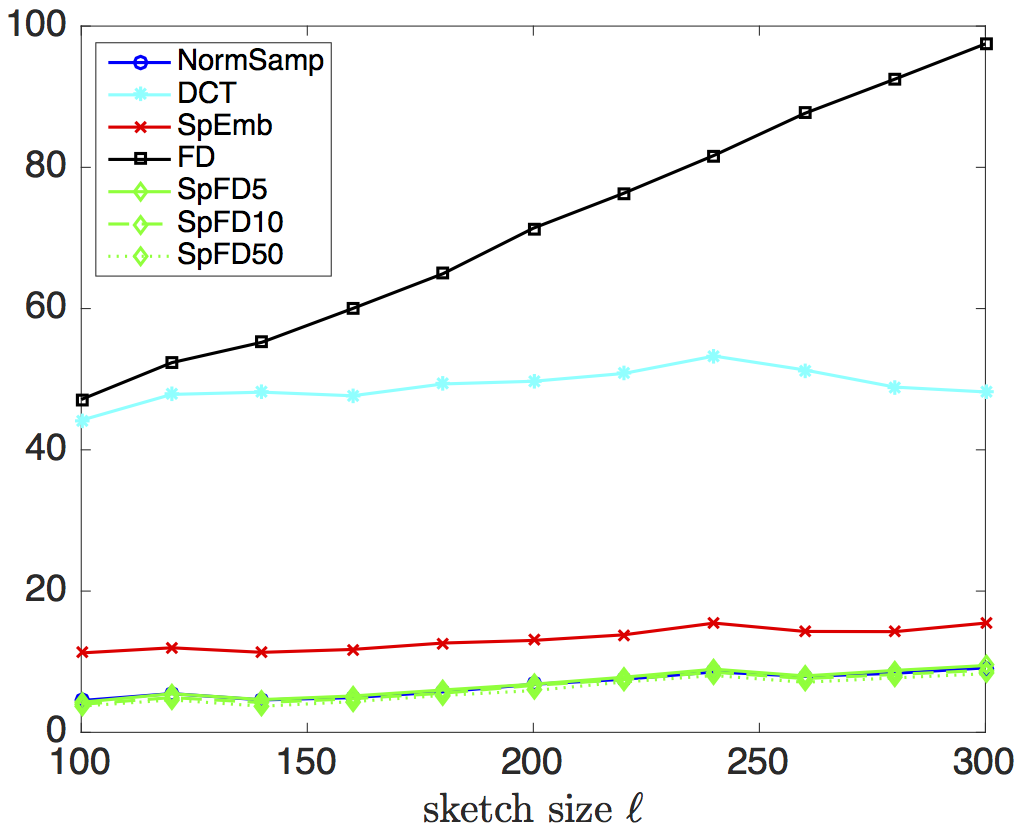}} \hspace{5pt}
\subfigure[][rcv1-small]{\includegraphics[width=4.1cm,height=3.5cm]{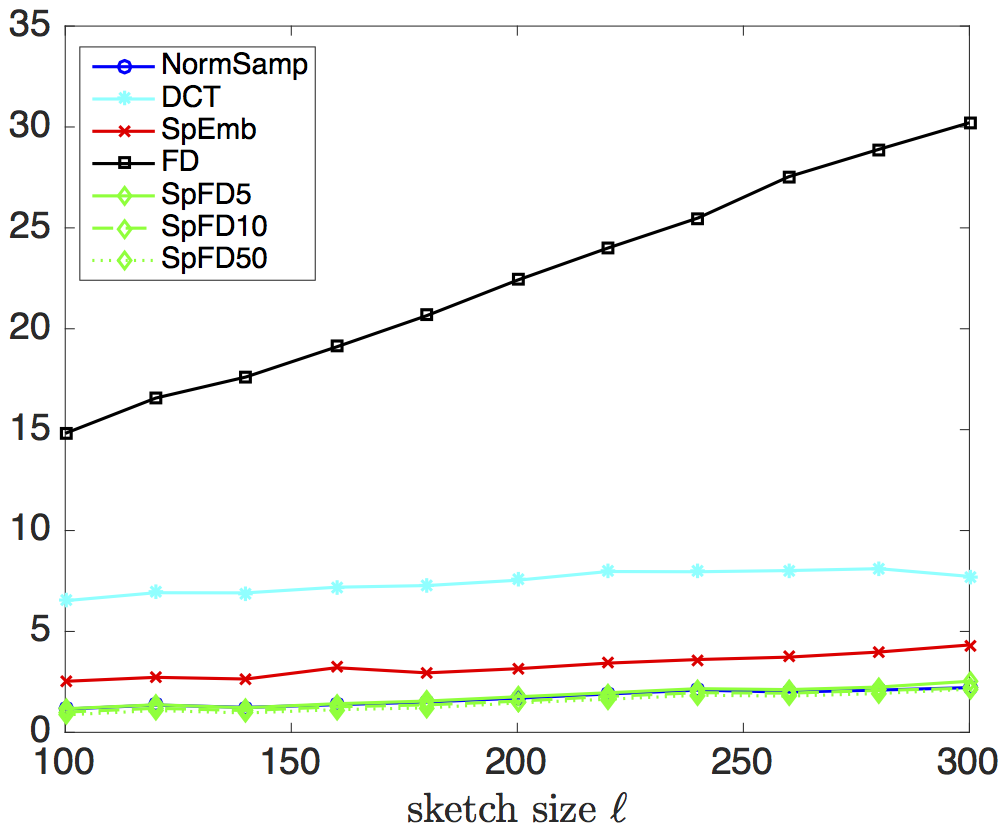}} \\

\caption{Results on real world datasets: Protein, MNIST-all, amazon7-small and rcv1-small.}
\label{fig: setReal}
\vspace{-10pt}
\end{figure*}

Regarding on the accuracy of the competing algorithms for both $F$-norm and $2$-norm, the relative errors for all algorithms decline as the sketch size $\ell$ increases. Among them, NormSamp, DCT and SpEmb achieve similar relative errors; our algorithm SpFD5, SpFD10 and SpFD50 obtain much more accurate results than the other randomized algorithms; and in general FD is the most accurate.
The accuracy level of SpFD5, SpFD10 and SpFD50 are close to that of FD.

Among SpFD5, SpFD10 and SpFD50,  the ordering of their accuracy level cannot be determined precisely.
SpFD5 contains larger error from the SpEmb it incorporates due to the smaller intermediate sketch size but less error from FD as the  number of SVD computations is reduced.
SpFD50 on the other hand, with larger error from FD and less error from the intermediate sketching.
The accuracy level of each SpFDq depends on which error dominates. Nevertheless, since FD is considered more accurate than SpEmb, we can find from our results that the accuracy level of SpFDq $(q=5, 10, 50$) improves as $q$ increases for most datasets.

For some datasets, SpFDq achieve higher accuracy level than FD.
This suggests it is possible that integrating certain level of SpEmb into FD generates a better result than the original FD.

From the running time perspective, all algorithms' running time shows a trend of inclining as the sketch size $\ell$ increases.
Among the algorithms, FD is the slowest; then followed by DCT;
NormSamp, SpEmb and SpFDq are gradually increasing with the sketch size and they are far more efficient compared to FD.

A close-up view of the running time for SpFDq ($q=5, 10, 50$), NormSamp and SpEmb on all datasets is displayed in Fig.~1 in
Section 3 of the supplemental material. NormSamp computes the probability distribution by calculating the norm square of each row that respects the sparsity of $A$. It is generally faster than projection methods like SpEmb since it does not require any row combinations to form the sketch. 
SpFDq ($q=5, 10, 50$) has similar running time as NormSamp and faster than SpEmb; in addition, the running time of SpFDq ($q=5, 10, 50$) reduces as $q$ increases for most datasets although the difference among them is not obvious. But flop counts in TABLE~\ref{tab: flopcounts} show the opposite. To understand this, possible reasons are:

\begin{itemize}
\item Demonstrated by the two tables in Section 4 of the supplemental material, the running time for computing
$[AV]_k$ and $\tilde{A}_k = [AV]_kV^T$ are similar for SpEmb and SpFDq as predicted, however the running time for constructing $V$ for SpEmb, SpFD5, SpFD10 and SpFD50 declines according to this order.
 Note that this running time consists of two parts -- forming $SA$ or $SPA$ and performing $1$ round of QR or $q-1$ rounds of SVD. One possible reason is the inclining number of zero rows in $S$, for which the reader may refer to Section 2 of the supplemental material.
 This will cut down the cost of the $q-1$ rounds of SVD for SpFDq with large $q$. The other possible reason is that as a SpEmb (-like) matrix ($S\in\mathbb{R}^{\ell\times n}$ for SpEmb, $S\in\mathbb{R}^{q\ell\times n}$ for SpFDq), the smaller the sketch size, the larger number of row combinations is required while forming $SA$ or $SPA$.

 \item Note that the earlier versions of MATLAB counted the flops. With the incorporation of LAPACK in MATLAB 6 (and newer versions),
this is no longer practical since many optimization techniques are adopted in LAPACK, and furthermore the memory latency to fetch
anything not in cache is much greater than the cost of a flop, so the flop counts in TABLE~\ref{tab: flopcounts} for each algorithm
and the running time in Fig.~1 of Section 3 in the supplemental material may not be consistent.
\end{itemize}

Based on the observation and discussions above, as the efficiency is not compromised much for large $q$, then from the accuracy perspective, we may choose a relatively large $q$ for SpFD in practice.
In conclusion, our new algorithm SpFD can be seen as an integration of FD and SpEmb in favor of FD's accuracy and SpEmb's efficiency, as a result, it is the best among the algorithms we compared in terms of accuracy and efficiency.

\section{Applications in Network Analysis}
\label{sec: 5}
Large-scale networks arise in many applications, one basic question often asked in network analysis is to identify the most `important' nodes in a network \cite{Brandes2005, Estrada2010}.
In a directed network, two types of nodes are of interest: \textit{hubs} and \textit{authorities} \cite{Kleinberg1999, Benzi2013}. Hubs are nodes which point to many nodes that are considered important while authorities are these important nodes. Thus, important hubs are those which point to many important authorities and important authorities are those pointed to by many important hubs. To measure the importance, each node has a `hub score' and an `authority score'. It is necessary to have algorithms computing the hub and authority scores associated with each node and thus determine the important nodes.

Given a directed graph $G$, $G = (V, E)$ is formed by a set of vertices $V$ and a set of edges $E$ which is formed by ordered pairs of vertices. $(i, j)\in E$ implies there is a link from node $i$ to $j$. The adjacency matrix of $G$ is a matrix $A\in\mathbb{R}^{\abs{V}\times \abs{V}}$ defined in the following way:
\[
A = (a_{ij}) \text{~ and ~~} a_{ij} = \left\{\begin{alignedat}{2}
			&1& ~ ~~~ &\text{if $(i, j)\in E$},\\
			&0& ~ ~~~ &\text{otherwise}.
			\end{alignedat}\right.
\]

Method I (HITS) \cite{Kleinberg1999, Benzi2013}: A well-known algorithm in ranking the hubs and authorities is Hypertext Induced Topics Search (HITS) algorithm.
HITS is in fact an iterative power method to compute the dominant eigenvectors for $A^TA$ and $AA^T$ which correspond to the authority and hub scores respectively.
It depends on the choice of the initial vectors
 and only the information obtained from the dominant eigenvectors of $A^TA$ or $AA^T$  is used.
In our experiments, the starting vectors we use are with all entries i.i.d. from $\mathcal{N}(0,1)$
and the stopping criterion is $\norm{x^{(k)}-x^{(k-1)}}_2\leq 10^{-3}$ where $x^{(k)}$ represents the resulting vector at the $k$th iteration.

Method II ($e^\mathcal{A}$)  \cite{Estrada2005, Benzi2013}:
The hub and authority scores of node $i$ are given by $(e^\mathcal{A})_{ii}$ and $(e^\mathcal{A})_{n+i,n+i}$
respectively for $i \in[n]$, where $
\mathcal{A} = \begin{bmatrix}0 & A \\ A^T & 0\end{bmatrix}\in\mathbb{R}^{2n\times 2n}
$. Compared to HITS, this method takes spectral information from all eigenvectors of $A^TA$ and $AA^T$ (with declining weights) into consideration which may lead to improved results.

\begin{table*}[tb]
\centering
\footnotesize
\begin{tabular}{|l|c|c|c|c|c|c|c|c|c|}
\hline
 &  HITS &
 $e^\mathcal{A}$ &
   NormSamp &
   DCT &
 SpEmb&
   FD&
  SpFD5 &
  SpFD10 &
   SpFD50 \\
 \hline
 \hline
 &
    57  &  57  &       57   &   57  &   57 &   57 &   57   &  57 &  57
 \\
\cline{2-10}
&    634   & 17 &   17 &   17  &  17  & 644  & 644  & 634  & 644
\\
\cline{2-10}
&     644 &  644   &634 &   21 &   21&   643 &   17   &644  & 119
\\
\cline{2-10}
&     721  & 643  & 644   &644  & 529&   634  & 643  & 643 &  643
\\
\cline{2-10}
 \multirow{2}{*}{Top Ten Hubs} & 643 &  634  & 643 &  643 &   54  & 106 &  634   & 17 &  106
\\
\cline{2-10}
 \multirow{2}{*}{(Computational Complexity)}  & 554 &  106  & 106 &   54  & 106 &  529 &  106  & 106 &  634
\\
\cline{2-10}
&  632 &  119  & 119  & 255&   119 &  119  &  21  & 119  &  17
\\
\cline{2-10}
 &  801  & 529  & 721 &  106&   634&    86  & 162&   255 &   86
\\
\cline{2-10}
 &  640   & 86  &  51   &119   & 51  & 162  & 529 &   86&   162
\\
\cline{2-10}
&  629   &162&    86  & 529  & 668  & 520   & 86 &  529  &  21
\\
\cline{2-10}
&\multicolumn{2}{l|}{\scriptsize no. nodes shared w. $e^{\mathcal{A}}$} & 8    & 7 &    6   &  9 &    9 &    9  &   9\\
\hline
& 719   &  1 &  717&     1   &  1   &  1    & 1   &  1   &  1
 \\
\cline{2-10}
&  717  & 315   &  2   &  2  & 673  & 315  & 315&   717 &  315
\\
\cline{2-10}
&  727  & 673  & 716&    45 &    2   &673  & 719  & 315&   148
\\
\cline{2-10}
 &  723   &148   &718   & 50 &   49  & 148 &  673&   673  &   2
\\
\cline{2-10}
\multirow{2}{*}{Top Ten Authorities} &  808  & 719  & 719 &  315  & 341  & 719 &  148  & 148 &  717
\\
\cline{2-10}
\multirow{2}{*}{(Computational Complexity)} & 735  & 717&   720 &  433  &  45 &  717  & 717 &    2 &  719
\\
\cline{2-10}
& 737    & 2  & 721 &  341&   729 &   45  & 363&   719  & 673
\\
\cline{2-10}
  &   1 &   45   &722  & 742 &  721   & 98  & 727 &  374  & 727
\\
\cline{2-10}
  & 722  & 727  & 723  & 673 &  664&   727  & 737   &808&   723
\\
\cline{2-10}
  &  770 &  534 &  724  &  98  & 148 &    2 &  534 &  727 &  735
\\
\cline{2-10}
&\multicolumn{2}{l|}{\scriptsize no. nodes shared w. $e^{\mathcal{A}}$} &3   &  5  &   5   &  9   &  8 &    8 &    8\\
\hline
 Running Time (s) & 0.0388  &  0.5248  &  0.0090    &0.0300 &   0.0100  &  0.0814   & 0.0079  &  0.0069 & 0.0062
 \\
\hline\hline
& 210  &  210  & 210 &  210 &210&     210&   210 & 210 &  210
 \\
\cline{2-10}
&            637      &   637       &  637     &    637   &      637    &     637      &   637     &    637     &    637
\\
\cline{2-10}
 &  413       &  413    &     413     &    413     &    413     &    413     &    413 &        413   &      413
\\
\cline{2-10}
& 1586     &   1586      &  1586       & 1586       &  552    &    1586     &   1586 &       1586   &     1586
\\
\cline{2-10}
\multirow{2}{*}{Top Ten Hubs}&  552    &     552         &552     & 542  &   1586   &   552   &   552     &552     &    552
\\
\cline{2-10}
\multirow{2}{*}{(Death Penalty)}& 462        & 462      &   930       &  930   &      545      &   930  &       462  &       462    &     618
\\
\cline{2-10}
&930     &    930     &    542       & 1275      &   618     &    542      &   618     &    542     &    930
\\
\cline{2-10}
&  542     &    542       & 1275      &   545       &  462      &   462   &      542  &       618     &    542
\\
\cline{2-10}
& 618     &    618       &  462   &       80     &    578      &   618  &       930 &        930  &       462
\\
\cline{2-10}
& 1275       & 1275      &    80     &   1258     &   1275    &    1275   &     1275   &     1275    &    1275
\\
\cline{2-10}
& \multicolumn{2}{l|}{\scriptsize no. nodes shared w. $e^{\mathcal{A}}$} &  9&7 &   8   & 10  &  10 & 10 &   10 \\
\hline
 &4       &    4       &   16      &     4     &      1       &    4     &      4   &        4    &       1
 \\
\cline{2-10}
 &1      &     1       &    4     &      6      &     4    &       1  &         1  &         1      &     4
\\
\cline{2-10}
 &6      &     6        &  14     &      1      &     3   &        6      &     6        &   6       &    6
\\
\cline{2-10}
 & 7        &   7       &   27        &  27      &     5     &      7    &      16  &        16        &   7
\\
\cline{2-10}
\multirow{2}{*}{Top Ten Authorities} & 10    &      10        &  10      &    16     &     27     &     10       &    2         &  7   &       10
\\
\cline{2-10}
\multirow{2}{*}{(Death Penalty)} & 16       &   16     &      1      &   130        &  21       &   16      &    10  &         2      &    16
\\
\cline{2-10}
 &  2       &    2      &    13      &    12    &      14  &         3      &    14        &   3    &       3
\\
\cline{2-10}
& 3       &    3     &      9      &   513     &   1632      &     2     &     44    &      10      &     2
\\
\cline{2-10}
 & 44  &  44     &     3        &   2      &   394   &       44        &   7     &     27    &      27
\\
\cline{2-10}
& 27 &    27  &   2 &   394  &    12  &     27  &   384  &     9 &   44
\\
\cline{2-10}
& \multicolumn{2}{l|}{\scriptsize no. nodes shared w. $e^{\mathcal{A}}$} & 7  &  6   &4   &   10 &     8  &    9   &  10  \\
 \hline
 Running Time (s)&   0.0723  &  5.1974 &   0.0282 &   0.1312 &   0.0431   & 0.2868 &   0.0213  &  0.0197 &   0.0188
 \\
\hline
\end{tabular}
\caption{The top 10 hubs, top 10 authorities and running time used to derive the hub and authority scores for networks \textit{Computational Complexity} and \textit{Death Penalty}.}
\label{tab: CC}
\vspace{-5pt}
\end{table*}

Method III: Low rank approximation techniques could be applied to compute the hub and authority scores.
For a randomized algorithm with sketching matrix $S$, the orthogonal basis $V\in\mathbb{R}^{n\times \ell}$ for the row space of the sketch $SA\in\mathbb{R}^{\ell\times n}$ is obtained similar as in Algorithm~\ref{alg: proto}.
The orthogonal bases for FD and SpFD are obtained following Algorithm~\ref{alg: fd} and Algorithm~\ref{alg: SpFD} respectively.
The sketch size $\ell$ is chosen based on $k$, the number of top hubs and authorities required in a question. Then following Algorithm~\ref{alg: SVD}, we could obtain the approximated SVD of $A$, $\tilde{U}\tilde{\Sigma}\tilde{V}^T$ where $\tilde{U}, \tilde{V}\in\mathbb{R}^{n\times \ell}$ and $\tilde{\Sigma}\in\mathbb{R}^{\ell\times \ell}$.
\begin{algorithm}[!]
\caption{Approximating SVD \cite{Halko2011}}
\label{alg: SVD}
\begin{algorithmic}[1]
\REQUIRE $A\in\mathbb{R}^{n\times n}$ and $V\in\mathbb{R}^{n\times \ell}$.
\ENSURE $\tilde{U}, \tilde{V}\in\mathbb{R}^{n\times \ell}$ and $\tilde{\Sigma}\in\mathbb{R}^{\ell\times \ell}$.
\STATE Form $B = AV\in\mathbb{R}^{n\times \ell}$, where $BV^T$ yields the low rank approximation of $A$: $A\approx BV^T = AVV^T$.
\STATE Compute SVD of $B = \tilde{U}\tilde{\Sigma}\hat{V}^T$.
\STATE Set $\tilde{V} = V\hat{V}$.
\end{algorithmic}
\end{algorithm}

Note that since
$e^\mathcal{A}=~ \begin{bmatrix} U \cosh(\Sigma)U^T & U\sinh(V^T)\\
		V\sinh(\Sigma)U^T & V\cosh(\Sigma)V^T \end{bmatrix} $
where $A = U\Sigma V^T$ is the SVD of $A$.
We could use $\left(\tilde{U}\cosh(\tilde{\Sigma})\tilde{U}^T\right)_{ii}$ and $\left(\tilde{V}\cosh(\tilde{\Sigma})\tilde{V}^T\right)_{ii}$ to approximate the hub and authority scores of the $i$th node, respectively. In the experiments, we use $\ell = k + p$ where $p$ is set to be $5$.

We again compare the five algorithms NormSamp, DCT, SpEmb, FD and SpFDq ($q=5, 10, 50$) against the HITS and $e^{\mathcal{A}}$ methods.
All methods are applied to the following two real world networks.
\begin{itemize}
\item \textit{Computational Complexity}: The dataset contains $884$ nodes and $1616$ directed edges.
\item \textit{Death Penalty}: The dataset contains $1850$ nodes and $7363$ directed edges.
\end{itemize}

The rank of the decomposition used in the standard algorithms for the original hubs and authorities is $k=1$ \cite{Kleinberg1999}. In our experiments, we consider a more general case, that is, for each network we obtain the top $10$ hubs and authorities ($k=10$).
The results  are summarized in TABLE~\ref{tab: CC}.

For network \textit{Computational Complexity}, HITS and $e^\mathcal{A}$ produce different rankings, they share 4 out of the top 10 hubs and 4 out of the top 10 authorities. This discrepency is expected with reasons as discussed earlier.
As the low rank approximation methods follow the computation of $e^\mathcal{A}$, the results are compared against $e^\mathcal{A}$. For the hub ranking, SpFD5, SpFD10, SpFD50 and FD generate 9, 9, 9 and 9 out of 10 top hubs, respectively.
In terms of authority ranking,
SpFD5, SpFD10, SpFD50 and FD have 8, 8, 8, 9 out of 10 top authorities respectively projected by $e^\mathcal{A}$.

For network \textit{Death Penalty}, HITS and $e^\mathcal{A}$ generate the same top 10 hubs and 10 authorities with the same orderings. FD, SpFD5, SpFD10 and SpFD50 all have the same top 10 hubs as HITS and $e^\mathcal{A}$ with slight variations on the ordering.  Similarly for authority ranking, FD and SpFD50 identify all top 10 authorities, followed by SpFD10 and SpFD5 with 9 and 8 out of 10 top authorities identified respectively.

In comparison, SpFDq and FD are more accurate than NormSamp, DCT and SpEmb and this outcome aligns with the results obtained in Section~\ref{sec: 4}. In terms of running time, SpFDq are more efficient than DCT and FD and similar as NormSamp and SpEmb which is also in line with the results in Section~\ref{sec: 4}.  From this exercise, it is obvious that the low rank approximation could be applied to the network problem of identifying hubs and authorities. According to the accuracy level and efficiency, we have seen that SpFDq ($q = 5, 10, 50$) perform the best among the competing algorithms.

\section{Concluding Remarks}\label{sec: 6}
In this paper, we have developed a fast frequent directions algorithm SpFD for the low rank approximation problem. It makes use of the natural block
structure of FD and incorporates the idea of SpEmb in it and because of that, SpFD is able to accelerate the relatively inefficient algorithm FD
by a great amount. On the other hand, SpFD obtains a good low rank approximation with sketch size linear on the rank approximated which implies it is
more accurate than SpEmb. These two points are well supported by our experimental results and the application on network problems.

Many interesting issues are worthy of further study. These issues include:

\begin{itemize}
\item
Based on the performance of SpFD in our experiments, the proved error bound  in Theorem 1 does not seem to be tight. The techniques we used to derive SpFD's error bound
still largely depend on the properties derived for FD, it is important to seek new ways to derive a better  error bound by further exploiting the well-defined structure
of SpFD, this is still under investigation.

\item
An alternative to using FD would be to form the strong rank-revealing QR decomposition for a sparse sketch of $A$ (which could be terminated once the diagonal entries of the triangular/trapezoidal factor are small enough). Another thread would be to replace FD with other randomized algorithms such as in \cite{Halko2011}. An advantage of FD and our SpFD is that the matrix can be processed sequentially. To improve the performance, other randomized techniques \cite{Mahoney2011} or  sparse sketching methods \cite{Urano2013} could be used to form the sketch. They would have similar theoretical guarantees proved in this paper.
This is a subject for our future work. 

\item
All algorithms for SVD are necessarily iterative (the number of iterations required may be small and the flop counts
depend on the machine precision). The data needs to be clean in order for SpFD and FD to work well for big datasets. In practice, noisy big datasets require extra power iterations to attain acceptable accuracy \cite{Halko2011}. This has been demonstrated, for example,  by a method named SFD in  \cite{Ghashami2016}.
SFD in \cite{Ghashami2016} is a variation of FD which introduces a lower bound on the number of nonzero entries being processed in each iteration and it adopts power iteration
to boost the accuracy of the shrinking. Numerical results in \cite{Ghashami2016} has shown that SFD is very efficient for highly sparse matrices, so we only compare the accuracy of SFD, FD and SpFDq for three sparse datasets w8a, Birds and rcv1 in Fig.~\ref{fig: setSFD}
 (the number of power iterations in SFD is set to be $1$). Clearly SFD achieves a higher accuracy level for both $2$-norm and $F$-norm compared to FD and SpFDq. \\
It is critical to measure accuracy for big data using the spectral norm,
as discussed in the appendix of \cite{LiLSSKT2017}.  To the best of our knowledge, there is still a lack of good error bounds measured by spectral norm
for FD and many randomized methods. According to our numerical results, SpFD and FD work well for both Frobenius norm and spectral norm approximation.
We are now working on incorporating power iterations into randomized sketch to derive good spectral norm error bounds.

\begin{figure*}[!]
\centering

\subfigure{\makebox[15pt][r]{\makebox[20pt]{\raisebox{50pt}{\rotatebox[origin=c]{90}{$F$-norm Error}}}}
\includegraphics[width=4.2cm,height=3.1cm]{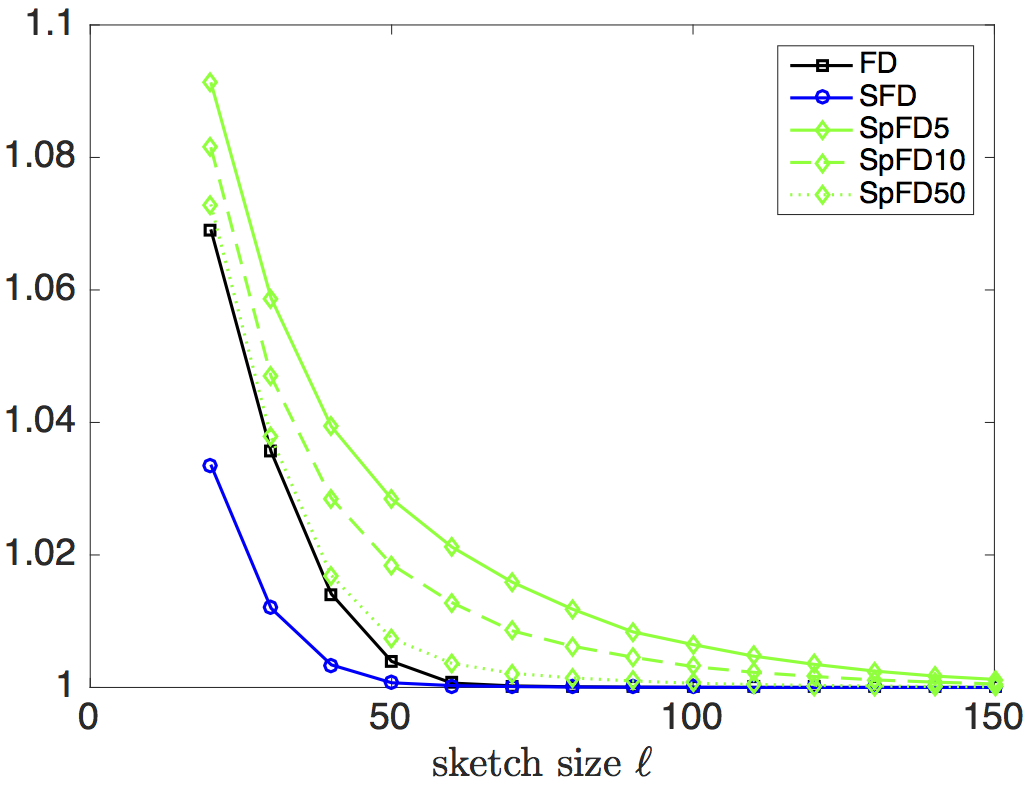}} \hspace{1pt}
\subfigure{\includegraphics[width=4.2cm,height=3.1cm]{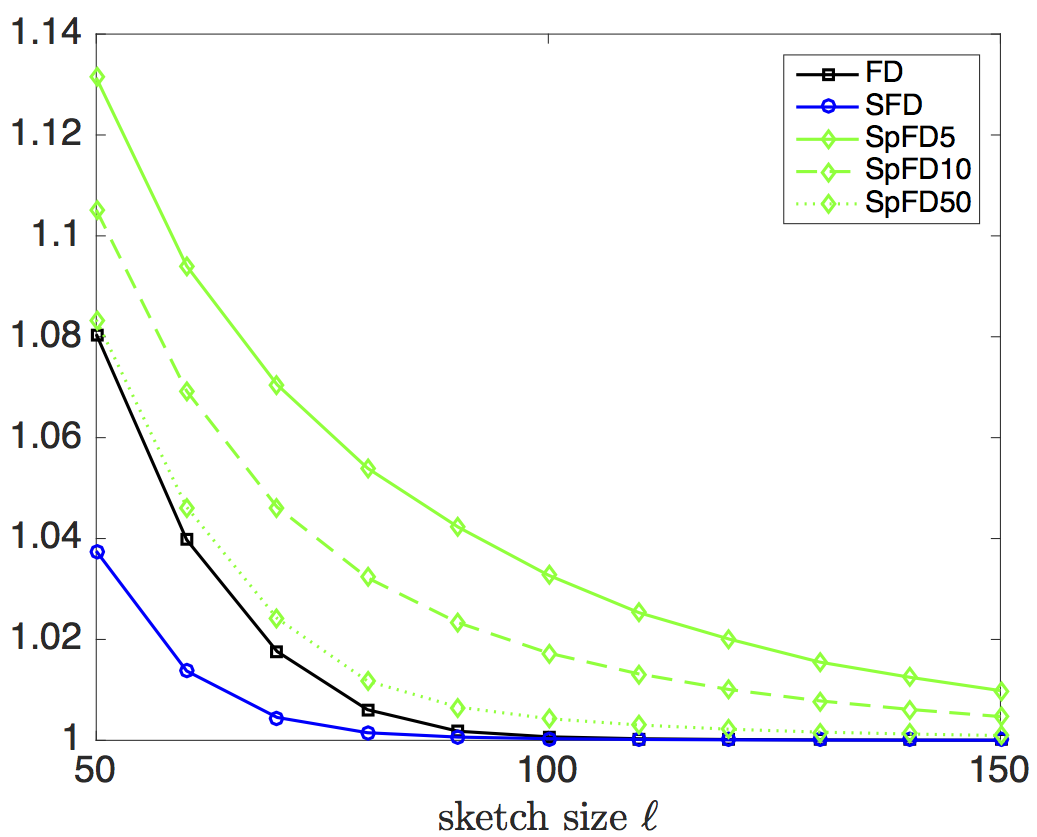}}\hspace{1pt}
\subfigure{\includegraphics[width=4.2cm,height=3.1cm]{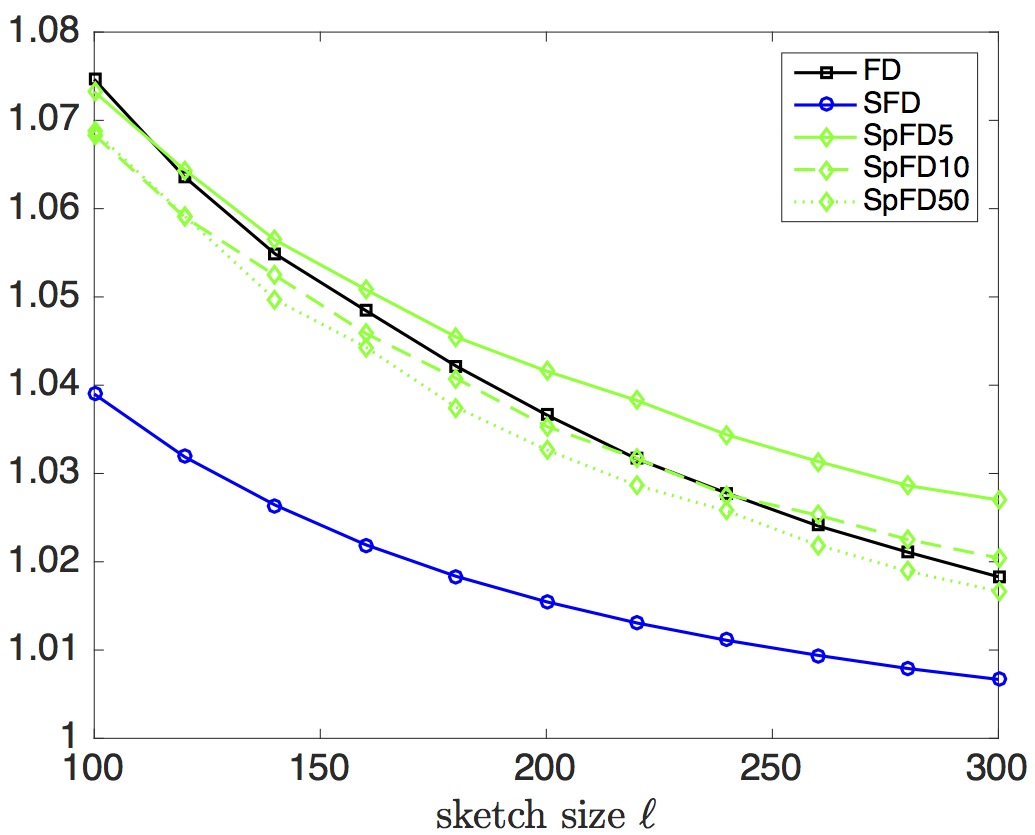}}  \\
\vspace{-5pt}
\setcounter{subfigure}{0}
\hspace{2pt}\subfigure[][w8a]{\makebox[15pt][r]{\makebox[20pt]{\raisebox{50pt}{\rotatebox[origin=c]{90}{$2$-norm Error}}}}
\includegraphics[width=4.15cm,height=3.1cm]{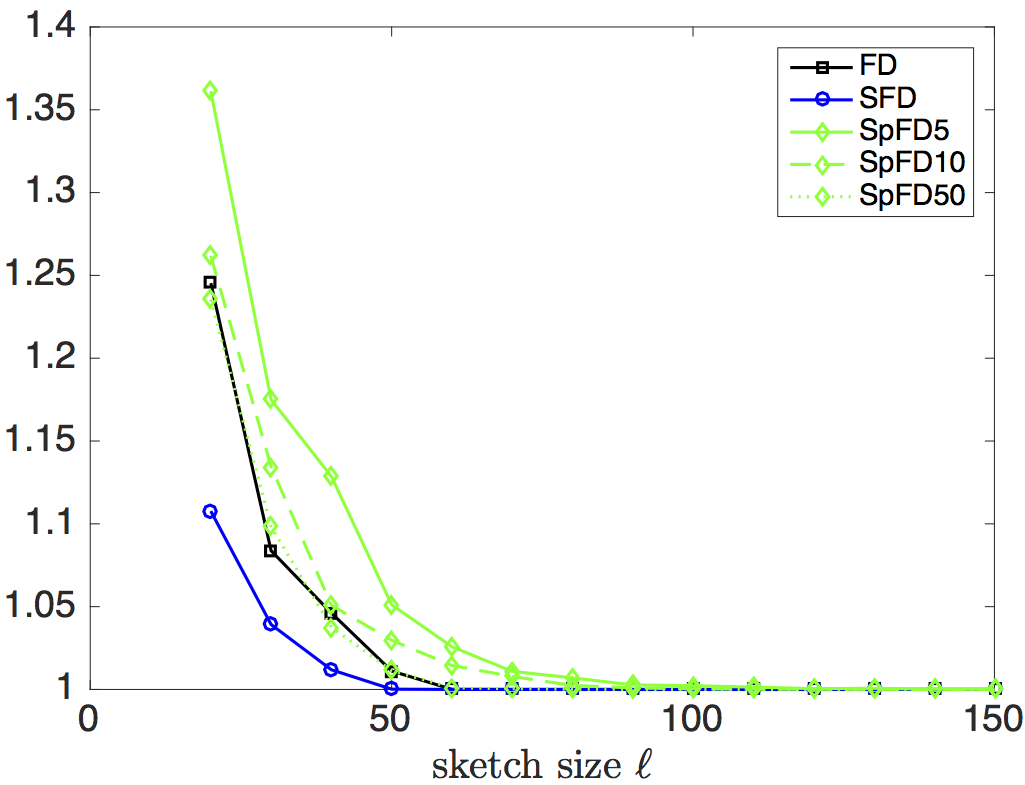}} \hspace{5pt}
\subfigure[][Birds]{\includegraphics[width=4.1cm,height=3.1cm]{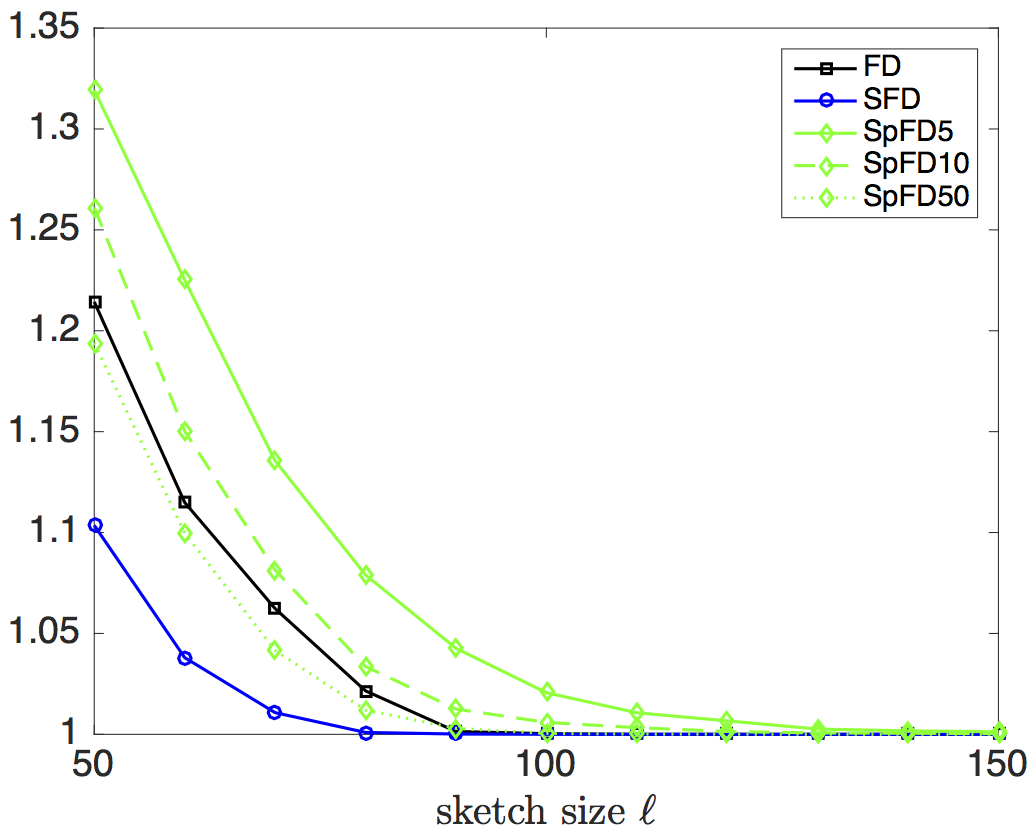}}\hspace{5pt}
\subfigure[][rcv1]{\includegraphics[width=4.1cm,height=3.1cm]{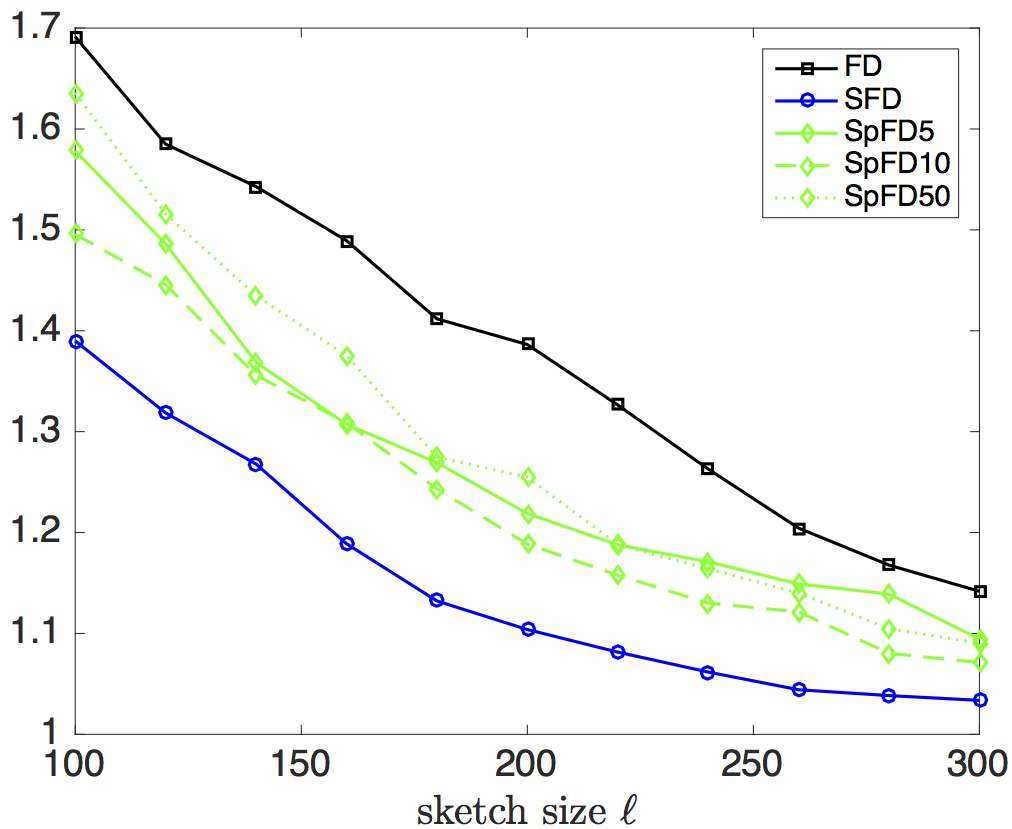}} \\
\vspace{-10pt}

\caption{Results of comparison with SFD on w8a, Birds and rcv1-small.}
\label{fig: setSFD}
\vspace{-12pt}
\end{figure*}

\end{itemize}

\ifCLASSOPTIONcompsoc
  \section*{Acknowledgments}
\else
  \section*{Acknowledgment}
\fi
The authors would like to thank the associate editor and anonymous reviewers for their
valuable comments and suggestions on earlier versions of this paper.

\ifCLASSOPTIONcaptionsoff
  \newpage
\fi

\end{document}